\newcommand{\N}{\mathbb{N}}
\newcommand{\Z}{\mathbb{Z}}
\newcommand{\R}{\mathbb{R}}
\newcommand{\E}{\mathbb{E}}
\newcommand\Tstrut{\rule{0pt}{2.6ex}}  
\newcommand\Bstrut{\rule[-0.9ex]{0pt}{0pt}}
\newcommand{\ADD}[1]{{\color{black}
			#1}}
\newtheorem{thm}{Theorem}[section]
\newtheorem{theorem}[thm]{Theorem}
\newtheorem{corollary}[thm]{Corollary}
\newtheorem{lemma}[thm]{Lemma}  
\newtheorem{proposition}[thm]{Proposition}
\newtheorem{definition}[thm]{Definition}
\newtheorem{hypothesis}[thm]{Hypothesis}
\theoremstyle{remark}
\newtheorem{remark}[thm]{Remark}
\newtheorem{example}[thm]{Example}
\newtheorem*{lemma*}{Lemma}
\numberwithin{equation}{section}
\begin{document}

\title{The {M}aslov index, degenerate crossings and the stability of pulse solutions to the {S}wift-{H}ohenberg equation}

\author{Margaret Beck\footnote{Department of Mathematics and Statistics, Boston University; Boston, MA, USA; mabeck@bu.edu}, Jonathan Jaquette\footnote{Department of Mathematics and Statistics, Boston University; Boston, MA, USA} \footnote{Department of Mathematical Sciences, New Jersey Institute of Technology; Newark, NJ, USA; jcj@njit.edu} 
\footnote{J.J.'s ORCID: 0000-0001-8380-3148}, Hannah Pieper\footnote{Department of Mathematics and Statistics, Boston University; Boston, MA, USA; hpieper@bu.edu} \footnote{Corresponding Author.}}

\date{\today}
\maketitle

\begin{abstract}
In the scalar Swift-Hohenberg equation with quadratic-cubic nonlinearity, it is known that symmetric pulse solutions exist for certain parameter regions. In this paper we develop a method to determine the spectral stability of these solutions   by associating a Maslov index to them. This requires extending the method of computing the Maslov index introduced by Robbin and Salamon [\emph{Topology} 32, no.4 (1993): 827-844] to so-called degenerate crossings. We extend their formulation of the Maslov index to degenerate crossings of general order in the case where the intersection is fully degenerate, meaning that if the dimension of the intersection is $k$, then each of the $k$ crossings is a degenerate one. We then argue that, in this case, this index coincides with the number of unstable eigenvalues for the linearized evolution equation. Furthermore, we develop a numerical method to compute the Maslov index associated to symmetric pulse solutions. Finally, we consider several solutions to the Swift-Hohenberg equation and use our method to characterize their stability.  
\end{abstract}

\noindent {\bf Keywords:} stability; conjugate points; Maslov index; Swift-Hohenberg equation. \\

\noindent {\bf Acknowledgments:} This material is based upon work supported by the National Science Foundation under Award No. DMS-1907923 and DMS-2205434.

\tableofcontents



\section{Introduction}\label{S:intro}



Coherent structures, such as wave trains, fronts and pulses, often occur in spatiotemporal systems found in nature or laboratory \ADD{settings.} Mathematically, these objects can be studied as the solutions of partial differential equations posed on infinite domains. Oftentimes, one wants to understand the long term behavior of these models to gain insight into observable physical phenomena. To understand the long term behavior of these models, one often begins by seeking to understand the stability of these coherent structures. One can study spectral, linear and nonlinear (in)stabilities of such solutions. In particular, spectral stability is determined by whether the linearization of the PDE about a given solution has spectrum with positive real part. The spectrum can be divided into two disjoint sets: the essential and point spectrum. The essential spectrum is often relatively easy to compute, so the bulk of the work in determining spectral stability lies in looking for unstable eigenvalues. 

This work seeks to understand the (in)stability of pulse solutions to the Swift-Hohenberg equation. The Swift-Hohenberg equation \cite{CrossHohenberg,SwiftHohenberg} for $u: \R \times \R^+ \to \R$ is defined as 
\begin{equation}\label{eq: swift hohenberg} 
u_t = -(1+\partial_x^2)^2u + f(u),
\end{equation}
where the nonlinearity $f$ is typically chosen to be a parameter-dependent polynomial. For real parameters $\mu$ and $\nu$, we will work with the nonlinearity
\begin{equation}\label{eq: f - nonlinearity of SH}
f(u) =\nu u^2 - u^3 - \mu u.
\end{equation}
This equation was derived by Swift and Hohenberg in 1977 to study thermal fluctuations of a fluid near the Rayleigh-Benard convective instability, but has since been used as a general model with which to study pattern-forming behavior. For the Swift-Hohenberg equation, spectral stability is necessary for both linear and nonlinear stability \cite{henry81}. Therefore, if we can show that a pulse solution is spectrally unstable, then it is linearly and nonlinearly unstable as well.

Previous work has addressed the existence of pulse solutions to the 1D Swift-Hohenberg equation using numerical methods \cite{burkeknobloch07-2,burkeknobloch07,burkeknobloch07-3} and spectral methods have been used to compute the eigenvalues of these solutions \cite{burkeknobloch06} for certain parameter regimes. The work of \cite{makridesSanstede19,beck09} rigorously proved that a pulse solution can be formed by concatenating front and back solutions. Furthermore, the work of \cite{makridesSanstede19} argues that these pulse solutions are stable if and only if the front and back solutions are stable. 

We seek to round out this theory by providing a framework that can be used to count the eigenvalues of these pulse solutions. This framework is general in the sense that it can be used to characterize both spectral stability and spectral instability of such solutions.  Additionally, this framework applies to a wide range of parameter values (see Hypothesis \ref{hyp: nonlinearity}). Furthermore, this paper lays the groundwork for our forthcoming paper that uses validated numerics to provide a mathematical proof of both the existence and stability of these pulse solutions. 

Our strategy for proving the (in)stability of pulse solutions to the Swift-Hohenberg equation relies on the Maslov index. We will detect instabilities in the point spectrum by building directly on the results in \cite{BCJ18}, which allow one to count unstable eigenvalues by instead counting objects referred to as conjugate points. This extension is nontrivial due to the presence of nonregular (degenerate) crossings, which we will say more about shortly. In order to address this degeneracy, we will generalize the crossing form and main results of \cite{robbinsalamon} so that they apply to degenerate crossings of any order,  with the caveat that the intersection is fully degenerate, meaning that if the dimension of the intersection is $k$, then each of the $k$ crossings is a degenerate one. We will describe how our contribution to Maslov theory fits within the existing body of work in Section \ref{Ss: mathematical preliminaries}. Lastly, we numerically compute conjugate points and estimate unstable eigenvalues for several example pulse solutions using Fourier methods as a proof of concept. 

For an open parameter region, the Swift-Hohenberg equation admits symmetric pulse solutions. In particular, for certain parameter values, one can observe interesting behavior called homoclinic snaking in which there are an infinite number of symmetric (and asymmetric) pulse solutions \cite{beck09}. \ADD{We will look at two sets of parameter values.} \ADD{For the parameter values $(\nu, \mu) = (1.6, 0.05)$, which lie outside of the snaking region, there are two symmetric pulse solutions.} We also consider the parameter values $(\nu, \mu) = (1.6, 0.20)$ that lie within the snaking region. The branches of symmetric pulse solutions are approximated by \eqref{eq: BK normal form} with $\phi = 0, \pi$. We denote each pulse solution as $\varphi_\phi(x; \nu, \mu)$ so we can specify the corresponding phase condition $\phi$ in \eqref{eq: BK normal form} and the parameter values $\nu$ and $\mu$. Example parameter values and solutions are depicted in Figure \ref{fig:introduction results}. The numbers of unstable eigenvalues for the example solutions in Figure \ref{fig:introduction results} are given in Table \ref{table: eigenvalues}.

\begin{table}[H]
\begin{center}
\begin{tabular}{||c c c||} 
 \hline
 Symmetric Pulse  & \# Unstable Eigenvalues & \# Conjugate Points \Tstrut \\ 
 $\varphi_\phi(x; \nu, \mu)$ & &  \Bstrut \\
 \hline
 \hline
 $\varphi_0(x; 1.6, 0.05)$   & 1 & 1 \Tstrut\Bstrut \\
 \hline
 $\varphi_\pi(x; 1.6, 0.05)$ & 2 & 2 \Tstrut\Bstrut \\
 \hline
 $\varphi_0(x; 1.6, 0.2)$ & \text{None} & \text{None} \Tstrut\Bstrut \\
 \hline
 \hline
\end{tabular}
\end{center}
 \caption{Number of unstable eigenvalues and conjugate points for the example symmetric pulse solutions. }\label{table: eigenvalues}
\end{table}

\begin{figure}[H]
     \centering
      \begin{subfigure}[b]{0.3\textwidth}
     \centering
     \includegraphics[width=\textwidth]{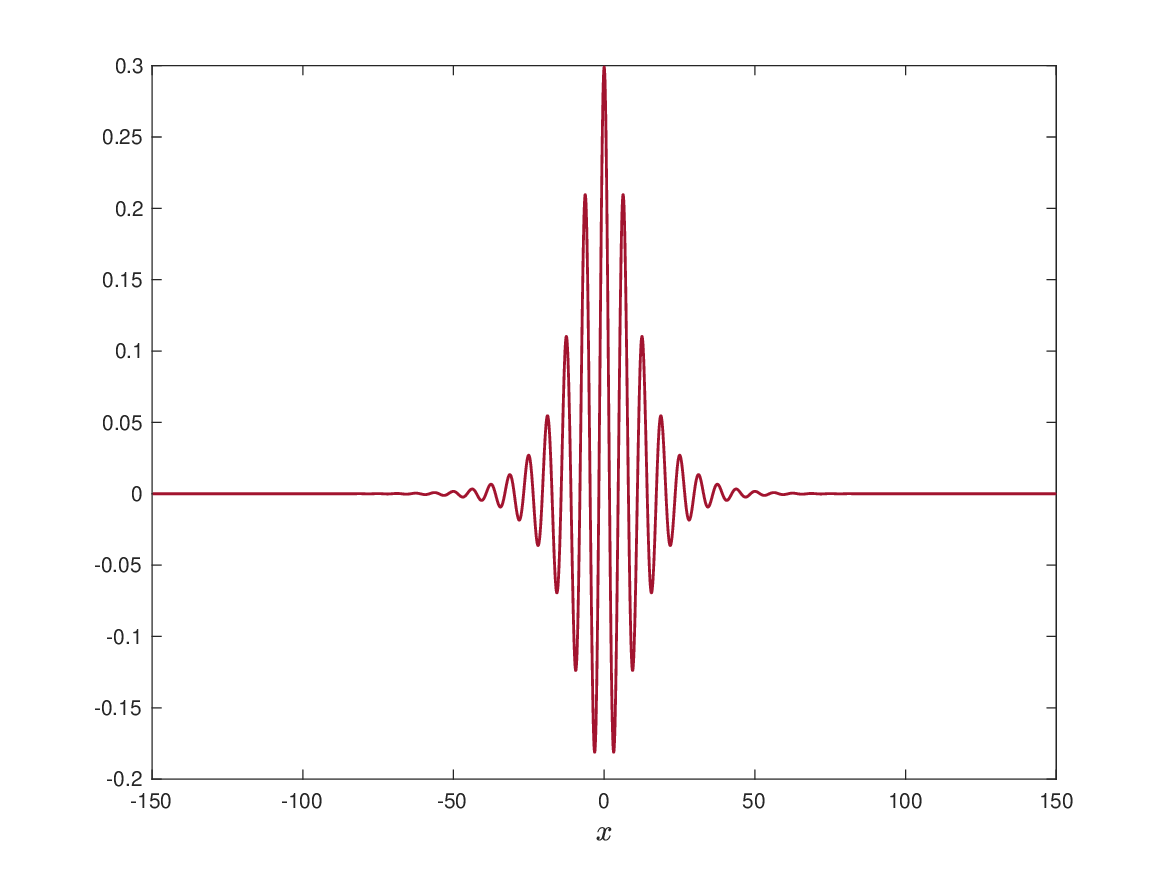}
     \caption{$\varphi_0(x;1.6, 0.05)$}
 	\end{subfigure}
     \begin{subfigure}[b]{0.3\textwidth}
         \centering
         \includegraphics[width=\textwidth]{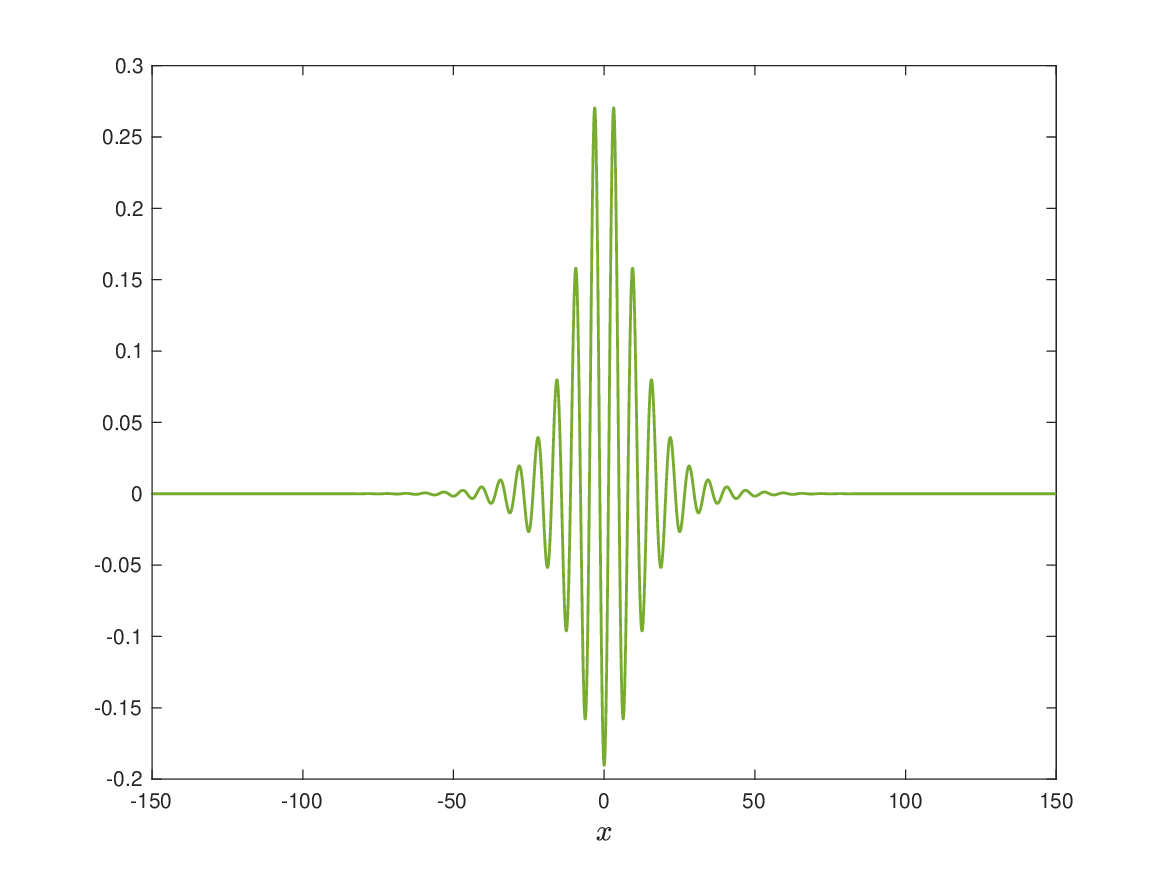}
         \caption{$\varphi_\pi(x;1.6, 0.05)$}
     \end{subfigure}
     \begin{subfigure}[b]{0.3\textwidth}
         \centering
         \includegraphics[width=\textwidth]{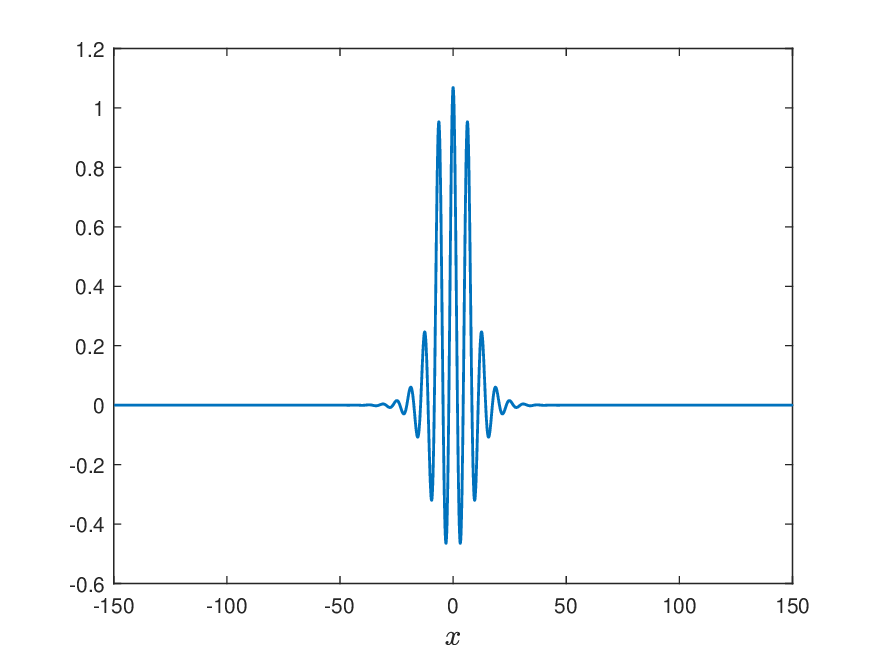}
         \caption{$\varphi_0(x; 1.6, 0.2)$}
     \end{subfigure}
    \caption{Example symmetric pulse profiles. }
        \label{fig:introduction results}
\end{figure}

In order to say more about the works we will build on \cite{BCJ18, MBJJ21} and to describe our results in more detail, we now introduce some assumptions and basic facts about our setting. We will also introduce relevant terminology concerning the Maslov index and the Lagrangian Grassmannian.

\subsection{Mathematical Preliminaries}\label{Ss: mathematical preliminaries}
When studying the spectral stability of a solution $\varphi$ to a PDE, one begins by linearizing about $\varphi$ to form a differential operator $\mathcal L$. The spectral stability of $\varphi$ is determined by the spectrum of this differential operator. If $\mathcal L$ has domain $D$, the point spectrum of $\mathcal L$ satisfies the eigenvalue problem 
\begin{equation}\label{eq: intro general eval problem}
\mathcal L u = \lambda u, \qquad u \in D.
\end{equation}
In our context, the first order system corresponding to \eqref{eq: intro general eval problem} will be given by a first order Hamiltonian system of the form
\begin{equation}\label{eq: general Hamiltonian}
q' = JC(x,\lambda)q, \qquad \qquad q \in \R^{2n}, \ J = \begin{pmatrix} 0 & I_n \\ -I_n & 0
\end{pmatrix},
\end{equation}
where $C(x, \lambda)$ is a $2n \times 2n$ symmetric matrix. Define the stable and unstable subspaces of solutions to \eqref{eq: general Hamiltonian} as
\begin{equation}\label{eq: general stable/unstable subspaces}
\begin{split}
\mathbb E^s_+(x, \lambda) & := \bigg\{ u \in \R^{2n} \ : \ u \text{ solves }\eqref{eq: general Hamiltonian} \text{ and } \lim_{x \to  \infty} \|u(x,\lambda)\| \to 0 \bigg\} \\ 
\mathbb E^u_-(x,\lambda) &  := \bigg\{ u \in \R^{2n} \ : \ u \text{ solves }\eqref{eq: general Hamiltonian} \text{ and } \lim_{x \to - \infty} \|u(x,\lambda)\| \to 0 \bigg\}.
\end{split}
\end{equation}
We will consider the evolution of $\mathbb E^u_-(x,\lambda)$ as either $x$ or $\lambda$ varies. More generally, both of these subspaces of solutions have a symplectic structure. To see why, let $u,v$ be solutions of \eqref{eq: general Hamiltonian} and observe 
$$\frac{d}{dx}\langle u, Jv\rangle = \langle u', Jv\rangle + \langle u, Jv'\rangle = \langle JCu, Jv\rangle + \langle u, JJC v\rangle = \langle u, C^Tv\rangle - \langle u, Cv \rangle = 0;$$
where we have used that $C$ is symmetric and $J^T = -J = J^{-1}$. We now introduce the  terminology and background necessary for further study of these subspaces through the lens of symplectic geometry.  

Given any $d$-dimensional vector space $V$, $G_k(V)$ denotes the set of all $k$-dimensional linear subspaces of $V$. When this set is given the structure of a smooth manifold, it is called the Grassmannian manifold. A symplectic form $\omega$ on $V$ is an anti-symmetric non-degenerate bilinear form on $V$ and taken together, the pair $(V, \omega)$ is called a symplectic vector space. If $V$ is finite dimensional, its dimension must be even. Thus, let $d = 2n$ and consider $G_n(\mathbb{R}^{2n})$. A Lagrangian plane is an $n$-dimensional subspace $\ell \in G_n(\mathbb{R}^{2n})$ with the property that $\omega(u,v) = 0$ for all vectors $u,v \in \ell$. The collection of all Lagrangian planes is called the Lagrangian Grassmannian and denoted by $\Lambda(n)$. In principle, the structure of $\Lambda(n)$ depends on the choice of $\omega$, but one can show the resulting spaces all have the same topological properties. We will be concerned only with the symplectic form defined by
\begin{equation}
\omega(u, v) = \langle u, Jv \rangle, \qquad J = \begin{pmatrix}
0 & I_n \\ -I_n & 0
\end{pmatrix},
\end{equation}
where $\langle \cdot , \cdot \rangle$ denotes the usual inner product on $\mathbb{R}^{2n}$. 

When referring to elements of $\Lambda(n)$, we will use the notation $\ell \in \Lambda(n)$ or another calligraphic letter. It will also be convenient to refer to a Lagrangian plane using a particular choice of a frame matrix. This means choosing $n \times n$ real matrices $X$ and $Y$ so that
\begin{equation}\label{E:frame}
\ell = \left\{ \begin{pmatrix} X \\ Y \end{pmatrix} u: u \in \mathbb{R}^n\right\}.
\end{equation}
To denote the frame matrix we will typically use block letters and write $L = \begin{pmatrix} X & Y
\end{pmatrix}^T$ inline or 
\[
L = \begin{pmatrix} X \\ Y \end{pmatrix}. 
\]
The choice of frame matrix is not unique. If $M$ is any invertible $n\times n$ real matrix, then $ \begin{pmatrix} XM & YM
\end{pmatrix}^T$ is also a frame matrix for the same Lagrangian plane. One can check that an $n$-dimensional subspace defined via \eqref{E:frame} is Lagrangian if and only if $X^TY = Y^T X$ and the column rank of $ \begin{pmatrix} X  & Y
\end{pmatrix}^T$ is $n$. We will be particularly interested in paths of Lagrangian planes on a compact domain, which we will denote by $\ell(t)$, or by its frame matrix $L(t)$, or via the component block matrices $X(t)$ and $Y(t)$ for $t$ in some interval.

This framework is particularly useful for the study of first-order Hamiltonian eigenvalue problems like \eqref{eq: general Hamiltonian}, because the associated stable and unstable subspaces in \eqref{eq: general stable/unstable subspaces} are paths of Lagrangian planes. 

We now describe how to leverage this framework to study solutions to \eqref{eq: intro general eval problem}. One has an eigenvalue of $\mathcal L$ at $\lambda = \lambda_*$ if there exists a solution to \eqref{eq: general Hamiltonian} that decays to zero as $|x| \to \infty$. Note that one really needs the solution to live in an appropriate Banach space but for most spaces, if one assumes that the essential spectrum is bounded away from $0$, this is equivalent to asymptotic decay of the eigenfunction. This directly corresponds to a nontrivial intersection of the stable and unstable subspaces for that value of the spectral parameter: $\mathbb E^{u}_-(x;\lambda_*) \cap \mathbb E^{s}_+(x;\lambda_*) \neq \{0\}$. The operator $\mathcal L$ has real spectrum since it is self adjoint so one could detect unstable eigenvalues by asking, \textit{are there any values of $\lambda > 0$ such that $\mathbb E^{u}_-(x;\lambda) \cap \mathbb E^{s}_+(x;\lambda) \neq \{0\}$?} 

It is perhaps surprising that one can reformulate this question as follows. Pick any fixed Lagrangian plane $\ell_* \in \Lambda(n)$, which we will call the reference plane. Fix the spectral parameter to be $\lambda = 0$ and ask, \textit{are there any values of $x \in \mathbb{R}$ for which $\mathbb E^{u}_-(x; 0) \cap \ell_* \neq \{0\}$?} Values $x = x_0$ where $\mathbb E^{u}_-(x_0; 0) \cap \ell_* \neq \{0\}$ are called conjugate points. This reformulation amounts to saying that the number of unstable eigenvalues of $\mathcal{L}$ is equal to the number of conjugate points associated with the reference plane $\ell_*$ at $\lambda = 0$ and it can be understood via the use of the Maslov index. The main results of this paper are justifying this reformulation in the context of the Swift-Hohenberg equation and providing a framework for counting the conjugate points. 

The Maslov index is used in the study of symplectic geometry and the theory of Fredholm operators. It is a counting index that provides a measure of the number of times a certain type of curve, called a Lagrangian submanifold, intersects a given surface in a symplectic space. This count can be used to provide a topological characterization of solutions to Hamiltonian systems that determines their spectral stability.

The main properties of the intersection number of Lagrangian submanifolds were proven in \cite{Arnold} and followed by results that connect the Maslov index to self-adjoint operators \cite{Arnold85, duistermaat}. These results can be viewed as generalizations of classical Sturm Liouville theory \cite{smale65, bot56,edwards64}. Beginning with Jones' work \cite{jones88}, the Maslov index has been used as a tool to study stability in systems that are Hamiltonian in their spatial variables \cite{CoxJonesMarzoula15Schrodinger,CoxJonesMarzoula15elliptic,schro[01], HLS18,CJLS16}. While these results connect the number of unstable eigenvalues to the number of conjugate points, it is often unclear how to find the number of conjugate points, which is necessary for this framework to be useful in determining spectral stability. \ADD{We describe two examples of systems for which the number of conjugate points have been computed for specific solutions.} The first is \cite{BCJ18}, in which the Maslov index was used to show that a symmetric pulse solution to a system of reaction diffusion equations is unstable. This was followed by \cite{MBJJ21}, in which the Maslov index and computer assisted proofs were used to argue that a parameter dependent system of bistable equations has both stable and unstable fronts.

Our goal is to use the Maslov index to argue that a pulse solution to the Swift-Hohenberg equation is unstable in certain parameter regions. In order to do so, we further Maslov theory in two distinct directions. In the first direction, we apply the Maslov index to a fourth order system that has not been considered in previous literature. We note that the unstable eigenvalues of certain fourth-order potential systems can be counted using the Maslov index \cite{howard2020}, but the Swift-Hohenberg equation falls outside the class of considered operators. The second contribution that this work makes is the generalization of the crossing form introduced in \cite{robbinsalamon} to $k$th order degenerate crossings. \ADD{Most of the previous work analyzing the stability of coherent structures using the crossing form of \cite{robbinsalamon} relies on so-called regular crossings, with  exceptions such as \cite{dengjones11, CoxCurranLatushkinMarangell}, in which second order crossings are considered.} Since the Swift-Hohenberg equation has third order crossings, these results do not apply and our theory is not restricted to a particular order of crossing. 
 There is one important restriction in our result, however, which is the requirement that the intersection is fully degenerate, meaning that if the dimension of the intersection is $k$, then each of the $k$ crossings is a degenerate one. Although we do not prove that this must necessarily be true for the Swift-Hohenberg equation, we provide numerical evidence that suggests it is true for all of the example pulses that we consider. Furthermore, we discuss a subtlety concerning the formulation of Lagrangian planes as graphs that becomes critical in calculations for non-regular crossings.

We begin by stating some assumptions and basic facts about this setting, which will enable us to then explain our main results. First, we assume that the nonlinearity $f(u)$ supports the existence of a pulse solution. 
\begin{hypothesis}\label{hyp: phi pulse} 
There exists a stationary solution $\varphi$ to \eqref{eq: swift hohenberg} such that $\lim_{x \to \pm \infty} \varphi(x) = 0$.
\end{hypothesis}

Linearizing about a stationary solution $\varphi$ and formulating the associated eigenvalue problem yields 
\begin{equation}\label{eq: SH eigenvalue problem on R}
\begin{split}
 \mathcal Lu & = \lambda u, \\
 \mathcal L  = -\partial_x^4 - 2 & \partial_x^2 - 1+ f'\circ\varphi(x).
 \end{split}
\end{equation}
We are primarily interested in detecting spectral instabilities, which means determining whether or not the spectrum of $\mathcal{L}$ has positive real part. 
Since the spectrum can be divided into the essential spectrum and point spectrum, and the essential spectrum is typically easy to compute, we wish to assume the essential spectrum is stable and focus on detecting instabilities in the point spectrum. Therefore, we make the following second assumption, which implies that the essential spectrum of the pulse is contained in the open left half plane \cite{promislow}. 

\begin{hypothesis}\label{hyp: nonlinearity} The derivative of the nonlinearity $f(u)$ in \eqref{eq: swift hohenberg} evaluated at $u = 0$ is negative.
\end{hypothesis}
\begin{remark}
\ADD{This hypothesis is  equivalent to requiring that the essential spectrum of $\mathcal L$ lies strictly to the left of the imaginary axis.} This is important because our analysis for $\lambda \in [0, \infty)$ relies on the existence of exponential dichotomies on $\R^+$ and $\R^-$ \cite{coppel}.
\end{remark}

To detect unstable eigenvalues, we will exploit the symplectic structure of the eigenvalue equation \eqref{eq: SH eigenvalue problem on R}, which can be seen via the change of variables
\begin{equation}\label{eq: symplectic change of coords sh}
q_1 = u, \qquad q_2 = u_{xx}, \quad q_3 = u_{xxx}+2u_x, \quad q_4=u_x.
\end{equation}
This allows us to write \eqref{eq: SH eigenvalue problem on R} as the first order system
\begin{equation}\label{eq: linearized first order}
\begin{split}q'& =B(x,\lambda)q, \qquad B(x, \lambda) :=  \begin{pmatrix} 0 & 0 & 0 & 1 \\ 0 & 0 & 1 & -2 \\ -\lambda-1 + f'\circ\varphi(x) & 0 & 0 & 0 \\ 0 & 1 & 0 & 0\end{pmatrix}.
\end{split}
\end{equation}
Since this system is Hamiltonian, we can also formulate the above equation as 
\begin{equation}\label{eq: symm C}
\begin{split} q'& =JC(x,\lambda)q,  \\
J =  \begin{pmatrix} 0 & I_2 \\ -I_2  & 0 \end{pmatrix}, & \qquad 
C(x,\lambda)= \begin{pmatrix} \lambda+1-f'\circ\varphi(x) & 0 & 0 & 0 \\ 
0 & -1 & 0 & 0 \\ 0 & 0 & 0 & 1 \\ 0 & 0 & 1 & -2
\end{pmatrix}.
\end{split}
\end{equation}

Using Hypothesis \ref{hyp: phi pulse}, we find the asymptotic limits of the coefficient matrix $B$ in \eqref{eq: linearized first order} to be given by 
\begin{equation}\label{eq: B_inf matrix}
B_\infty(\lambda) :=\lim_{x \to \pm \infty} B(x,\lambda) = \begin{pmatrix} 0 & 0 & 0 & 1 \\ 0 & 0 & 1 & -2 \\ -\lambda-1 + f'(0) & 0 & 0 & 0 \\ 0 & 1 & 0 & 0\end{pmatrix}.
\end{equation}
The asymptotic matrix $B_\infty(\lambda)$ is useful in understanding the dynamics of \eqref{eq: linearized first order} because it governs the asymptotic behavior of the system. We collect the relevant properties of $B_\infty(\lambda)$ into the below lemma. 

\begin{lemma}\label{lemma: B infinity hyperbolic} Let $B_\infty$ be as defined in \eqref{eq: B_inf matrix}. Then 
\begin{enumerate}[a)]
\item $B_\infty(\lambda)$ is hyperbolic for all $\lambda \geq 0$. 
\item The matrix $B_\infty$ depends analytically on $\lambda$. Also, there are positive constants $K_B$ and $C_B$, independent of $\lambda$, such that 
\begin{equation}\label{eq: matrix conversion B in hypothesis}
\|B(x;\lambda) - B_\infty(\lambda)\| \leq K_Be^{-C_B|x|} \text{ as } x \to \pm \infty.
\end{equation}
\end{enumerate}
\end{lemma}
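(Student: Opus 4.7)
For part (a), I would compute the spectrum of $B_\infty(\lambda)$ explicitly by reducing to the scalar fourth-order constant-coefficient equation $-u''''-2u''+(f'(0)-1)u=\lambda u$ from which $B_\infty$ is derived. Substituting the exponential ansatz $u=e^{\mu x}$ yields the characteristic equation
\begin{equation*}
(\mu^2+1)^2 = f'(0)-\lambda,
\end{equation*}
which can equivalently be verified by expanding $\det(B_\infty(\lambda)-\mu I)$ directly. Hypothesis~\ref{hyp: nonlinearity} gives $f'(0)<0$, so $f'(0)-\lambda<0$ for every $\lambda\geq 0$. Taking square roots in $\C$ produces $\mu^2 = -1 \pm i\sqrt{\lambda - f'(0)}$, with nonzero imaginary part; in particular $\mu^2\notin(-\infty,0]$, which precludes $\mu\in i\R$. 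This establishes hyperbolicity of $B_\infty(\lambda)$ for all $\lambda\geq 0$.

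For part (b), analytic dependence of $B_\infty(\lambda)$ on $\lambda$ is immediate since only the $(3,1)$-entry depends on $\lambda$, and it does so linearly. For the exponential estimate, observe that $B(x;\lambda)$ and $B_\infty(\lambda)$ differ only in the $(3,1)$-entry, and their difference there is $f'(\varphi(x))-f'(0)$, which is independent of $\lambda$. Since $f$ is a polynomial, $f'$ is locally Lipschitz, and by Hypothesis~\ref{hyp: phi pulse} together with continuity the image $\varphi(\R)$ is bounded; hence there exists a constant $M$ with $|f'(\varphi(x))-f'(0)|\leq M|\varphi(x)|$ for all $x\in\R$. The estimate \eqref{eq: matrix conversion B in hypothesis} therefore reduces to showing that $|\varphi(x)|\leq Ke^{-C|x|}$ as $x\to\pm\infty$.

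To obtain this decay of $\varphi$, I would use that $\varphi$, viewed in the phase space of the stationary ODE $q'=B(x;0)q$ with its nonlinear coupling reinstated, is a homoclinic orbit to the equilibrium $0$, whose linearization at $0$ is precisely $B_\infty(0)$. By part (a) applied at $\lambda=0$, this equilibrium is hyperbolic, so standard stable/unstable manifold theory (see, e.g., \cite{coppel}) yields exponential convergence of any orbit on the local stable or unstable manifold, at a rate arbitrarily close to the spectral gap of $B_\infty(0)$. Applying this at $x\to+\infty$ and $x\to-\infty$ and choosing $C_B$ to be any positive number strictly less than $\min\{|\mathrm{Re}\,\mu|:\mu\text{ is an eigenvalue of }B_\infty(0)\}$ gives the bound, with $K_B$ determined by the size of a neighborhood on which the manifold estimate is valid.

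The only mild obstacle is the logical ordering: the decay estimate in (b) relies on hyperbolicity at $\lambda=0$, so part (a) must be proved first. Beyond that, the argument is essentially bookkeeping from the characteristic polynomial calculation and a routine consequence of hyperbolicity for a homoclinic orbit.
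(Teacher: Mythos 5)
Your proof is correct and follows essentially the same route as the paper: part (a) is the same direct spectral computation (your roots $\mu^2=-1\pm i\sqrt{\lambda-f'(0)}$ are exactly the eigenvalues $\alpha_{1,2}$ of the block product $PQ$ that the paper uses, with $\gamma=\sqrt{\alpha}$), and part (b) rests on the same standard fact that a homoclinic orbit to a hyperbolic equilibrium converges exponentially, which the paper simply cites while you sketch the reduction to the decay of $\varphi$ via the $\lambda$-independent $(3,1)$-entry. No gaps.
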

\begin{proof}
See Section \ref{Subsec: proof of introduction lemma}. 
\end{proof}

\subsection{Statement of Main Results}
We now state the main result of our work.

\begin{theorem}\label{thm: main result 1}
Let $\ell(x) =  \mathbb E^u_-(x; 0)$ and fix the reference plane $\ell_* = \ell^{sand}_*$ with 
$$\ell^{sand}_* = \text{colspan} \begin{pmatrix} 0 & 0 \\ 1 & 0 \\ 0 & 1 \\ 0 & 0 
\end{pmatrix}.$$
 If Hypothesis \ref{hyp:degeneracy}  is satisfied, then the number of unstable eigenvalues of the operator in \eqref{eq: SH eigenvalue problem on R} acting on $H^4(\R)$ is equal to the number of conjugate points of the path $\ell(x)$ with respect to the reference plane $\ell_*^{sand}$. 
\end{theorem}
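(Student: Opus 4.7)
The plan is to prove Theorem~\ref{thm: main result 1} via a Maslov box argument, making essential use of the generalization of the Robbin-Salamon crossing form to fully degenerate crossings developed earlier in the paper. Consider the two-parameter family of Lagrangian planes $\mathbb{E}^u_-(x, \lambda)$ over the rectangle $\mathcal{B} = [-T, T] \times [0, \Lambda]$, where $T$ will eventually be sent to infinity and $\Lambda$ is chosen large enough to exceed any possible unstable eigenvalue of $\mathcal L$ on $H^4(\R)$. Lemma~\ref{lemma: B infinity hyperbolic} and the explicit form of $B_\infty(\lambda)$ for large $\lambda$ guarantee such a $\Lambda$ exists. By homotopy invariance of the generalized Maslov index, the signed sum of index contributions along the four sides of $\partial \mathcal B$ vanishes, so it suffices to compute the four contributions separately.

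First I would dispose of two sides as having no crossings. On the top edge $\{\lambda = \Lambda\}$, a large-$\lambda$ spectral argument combined with the asymptotic transversality between the unstable subspace of $B_\infty(\Lambda)$ and $\ell_*^{sand}$ shows $\mathbb{E}^u_-(x, \Lambda)$ is uniformly transverse to $\ell_*^{sand}$. On the left edge $\{x = -T\}$, the exponential dichotomy estimate in Lemma~\ref{lemma: B infinity hyperbolic}(b) gives $\mathbb{E}^u_-(-T, \lambda) \to$ (unstable subspace of $B_\infty(\lambda)$) as $T \to \infty$, and a direct calculation verifies that this limiting subspace is transverse to $\ell_*^{sand}$ uniformly in $\lambda \in [0, \Lambda]$. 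Both edges contribute zero to the Maslov index.

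Next I would identify the remaining two sides with the two quantities in the theorem. The bottom edge $\{\lambda = 0\}$ yields by definition the conjugate points of $\ell(x)$ relative to $\ell_*^{sand}$; Hypothesis~\ref{hyp:degeneracy} guarantees every intersection is fully degenerate, so the generalized crossing form contributes to the Maslov index an amount equal to the dimension of each intersection, counted with a consistent sign coming from the monotonicity of the $x$-flow (derived from the positivity structure inherited from $C(x, 0)$). On the right edge $\{x = T\}$, as $T \to \infty$ each unstable eigenvalue $\lambda_*$ produces exactly one crossing with $\ell_*^{sand}$: the corresponding eigenfunction lies in $\mathbb{E}^u_-(x, \lambda_*) \cap \mathbb{E}^s_+(x, \lambda_*)$ and hence satisfies $u, u_x \to 0$ as $x \to +\infty$, which places the associated direction of $\mathbb{E}^u_-(T, \lambda_*)$ inside $\ell_*^{sand}$ in the limit. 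Monotonicity in $\lambda$, inherited from $\partial_\lambda C(x, \lambda) = e_1 e_1^\top \succeq 0$, ensures that all such eigenvalue crossings contribute to the Maslov index with one sign. Homotopy invariance then forces the bottom- and right-edge contributions to agree in magnitude, giving the claimed identity.

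The main obstacle in this program is twofold. First, on the right edge, the limit $T \to \infty$ must be taken carefully because $\ell_*^{sand}$ is not identical to the asymptotic stable subspace of $B_\infty(\lambda)$, so one must show that each unstable eigenvalue produces exactly one Maslov crossing in the limit while no spurious crossings appear from the difference between these two planes. Second, applying the crossing form on the bottom edge is where the full-degeneracy hypothesis is critical: the Swift-Hohenberg eigenvalue problem produces third-order crossings rather than regular ones, so the classical Robbin-Salamon form does not apply, and one must appeal to the generalization developed earlier in the paper to verify that each degenerate crossing contributes the correct signed multiplicity rather than cancelling out.
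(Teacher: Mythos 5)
Your plan is the same Maslov-box argument the paper uses: the same four-edge decomposition, vanishing of the index on the $\lambda=\lambda_\infty$ edge via an a priori eigenvalue bound and on the $x=-\infty$ edge via transversality of the asymptotic unstable subspace with $\ell_*^{sand}$, monotonicity in $\lambda$ on the right edge and in $x$ on the $\lambda=0$ edge, with the higher-order crossing form carrying the degenerate spatial crossings. Two points of comparison are worth recording. First, where you propose to send $T\to\infty$ directly on the right edge and worry (correctly) about spurious crossings arising because $\ell_*^{sand}$ is not the asymptotic stable subspace, the paper instead keeps the right edge at finite $x=L$ and identifies its crossings with eigenvalues of the half-line operator $\mathcal L_L$ with boundary conditions $u(L)=u'(L)=0$ (Lemma \ref{lemma: conj pt eval corr}); the passage to $\R$ is then done separately by showing the eigenvalues of $\mathcal L_L$ are continuous and strictly increasing in $L$ (Lemma \ref{lemma: eigenvalues vary}, Theorem \ref{thm: extension to R}), which cleanly avoids the limit you flag as the obstacle. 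Second, you misstate the role of Hypothesis \ref{hyp:degeneracy}: it does not say every intersection is fully degenerate, it says every intersection of $\ell(s,0)$ with $\ell_*^{sand}$ is one-dimensional. Its purpose is to exclude the non-simple case (case (III) in the paper), where the first-order crossing form is degenerate but \emph{not} identically zero, so the fully-degenerate theory of \S\ref{section: maslov index for nonregular crossings} would not apply. Under the hypothesis a spatial crossing is either regular, with $Q^{(1)}(v)=p_2(s_*)^2>0$, or (when $p_2(s_*)=0$) fully degenerate with $Q^{(1)}=Q^{(2)}=0$ and $Q^{(3)}(v)=2p_3(s_*)^2>0$; monotonicity is not ``inherited from the positivity of $C(x,0)$'' (which is indefinite) but comes out of this case-by-case computation (Lemmas \ref{lem:first-ordercrossing-form}--\ref{lem:simple-nonregular}). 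With those corrections your outline matches the paper's proof.
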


\begin{remark}
The notation $\ell_*^{sand}$ is chosen because the identity matrix is ``sandwiched'' between two rows of zeros. This choice of reference plane is used to study fourth-order potential systems in \cite{howardhormander,howard2020}.
\end{remark}

\begin{remark}
	In brief, Hypothesis \ref{hyp:degeneracy} states that the dimension of intersection $ \ell(x) \cap \ell^{sand}_*$ is at most one. 
\end{remark}

The remainder of the paper is organized as follows. We establish further background in \S\ref{section: grassmann} on the Lagrangian Grassmannian and related objects, which are necessary to provide a precise definition of conjugate points. We also provide a formulation of the Maslov index that applies to degenerate crossings and generalize the results of \cite{robbinsalamon}. In \S\ref{S:count} we prove Theorem \ref{thm: main result 1} and compute the conjugate points in \S\ref{S:numerics} using numerics. In \S\ref{S:future} we discuss future work and in the appendix \S\ref{S:appendix} we discuss an explicit example and collect some technical results about coordinate changes and representing subspaces as a graph. 



\section{Conjugate Points and the Crossing Form}\label{section: grassmann}

In this section we establish further background on conjugate points and the crossing form. Recall that we are interested in detecting eigenvalues of \eqref{eq: SH eigenvalue problem on R} by finding objects called conjugate points. Conjugate points occur when the path of Lagrangian subspaces given by $\mathbb E^u_-(x,\lambda)$ intersect a fixed reference plane. These intersections are detected with a quadratic form called the crossing form that was developed in \cite{robbinsalamon} and the sign of this quadratic form determines the sign with which the conjugate points contribute to the Maslov index. Intuitively, the crossing form is performing a first derivative test on a particular function and if the function is increasing, then the crossing form is positive and the crossing contributes to the Maslov index with positive sign (similarly for the decreasing/negative case).

As an example, consider the function $f(t) = \pm t^j$ and suppose that we are interested in whether $f$ is increasing or decreasing at $t = 0$. If $f(t)$ is linear, then the first derivative will give us this information. This first derivative test is the analogue of the crossing form developed in \cite{robbinsalamon}. However, if $f$ is a higher order polynomial, then the first derivative at $t = 0$ vanishes and we must use a higher order derivative test by finding the lowest $j \in \mathbb N$ such that $\frac{d^j}{dt^j} f(t) \big|_{t = 0} = f^{(j)}(0) \neq 0$.

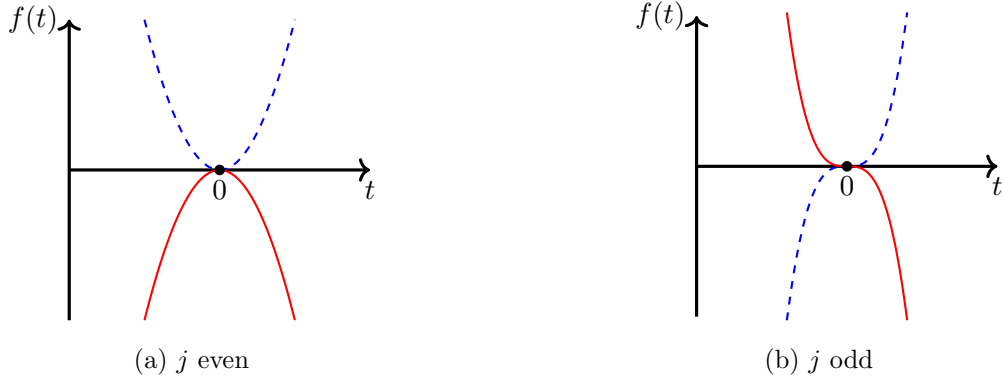
\begin{figure}[H]
    \begin{subfigure}[b]{0.5\textwidth}
        \centering
            \begin{tikzpicture}
                \draw[->, very thick] (-2,0) -- (2,0) node[below] {$t$};
                \draw[->, very thick] (-2,-2) -- (-2,2) node[left]{$f(t)$};
                \draw[scale=0.5, domain=-2:2, smooth, variable=\x, blue, thick, dashed] plot ({\x}, {\x*\x}) node[right]{};
                \draw[scale=0.5, domain=-2:2, smooth, variable=\x, red, thick] plot ({\x}, {-\x*\x}) node[right]{};
                \fill (0,0) circle (2pt) node[below] {$0$};
            \end{tikzpicture}
        \caption{$j$ even}
        \label{fig:n even example}
    \end{subfigure}
    \begin{subfigure}[b]{0.5\textwidth}
    \centering
            \begin{tikzpicture}
                \draw[->, very thick] (-2,0) -- (2,0) node[below] {$t$};
                \draw[->, very thick] (-2,-2) -- (-2,2) node[left]{$f(t)$};
                \draw[scale=0.5, domain=-1.6:1.6, smooth, variable=\x, blue, thick, dashed] plot ({\x}, {\x*\x*\x}) node[right]{};
                \draw[scale=0.5, domain=-1.6:1.6, smooth, variable=\x, red, thick] plot ({\x}, {-\x*\x*\x}) node[right]{};
                \fill (0,0) circle (2pt) node[below] {$0$};
            \end{tikzpicture}
        \caption{$j$ odd}   
        \label{fig:n odd example}
    \end{subfigure}
\caption{The function $f(t) = \pm t^j$ near $t = 0$. The case that $f^{(j)}(0) > 0$ is depicted by the dashed line and the case that $f^{(j)}(0)< 0$ is given by the solid line.} 
\label{fig: f example}
\end{figure}

In the case of the Swift-Hohenberg equation, we will need to use the equivalent of a third order derivative test to determine the sign of the contribution to the Maslov index. The goal of this section is to develop the analogue of a higher order derivative test via higher order crossing forms for the Maslov index. Compare Figure \ref{fig: f example} depicting $f(t)$ with Figure \ref{fig: lambda derivative trajectories} depicting the information given by the higher order crossing forms. 

To this end, we first discuss the formulation of a path of Lagrangian planes as the graph of a matrix. We will then see in subsequent sections that the Maslov index can be connected to the eigenvalues of this matrix. There are several subtleties associated with the graph formulation of a Lagrangian subspace that are critical in the computation of higher order crossing forms (see Remark \ref{rmk: graphing error}).

Let $\ell(t)$ be a Lagrangian path defined for $t \in [a,b]$ with frame matrix $L(t)$. Fix $t_0 \in [a,b]$ and let $\mathcal W$ be a Lagrangian plane such that $\ell(t_0) \cap \mathcal W = \{0\}$. Then, there exists an $\epsilon > 0$ and a $2n \times 2n$ matrix $A(t): \ell(t_0) \to \mathcal W$ such that for all $v \in \ell(t_0)$, we have $v \in \ker A(t_0)$ and
\begin{equation}
v + A(t)v \in \ell(t), \qquad  \ t \in (t_0 - \epsilon, t_0 + \epsilon).
\end{equation}

\begin{figure}[H]{}
  \centering
        \begin{tikzpicture}[scale=1]
        \draw[thick, -] (-3, 0) -- (3, 0) node[right] {$\ell(t_0)$};
        \draw[thick, -] (0, -3) -- (0, 3) node[above] {$\mathcal W$};
        \draw[thick, -] (-2.4, -2.4) -- (2.4, 2.4) node[above] {$\ell(t)$};

        \draw[dashed] (1.5, 1.5) -- (1.5, 0)  node[below] {$v$};
        \draw[dashed] (1.5, 1.5) -- (0, 1.5) node[left] {$A(t)v$};

        \filldraw [black] (1.5,1.5) circle (1.5pt) node[right] {$u = v + A(t)v$};
        \filldraw [black] (1.5,0) circle (1.5pt) node[right] {};    
        \filldraw [black] (0,1.5) circle (1.5pt) node[right] {};    
        \end{tikzpicture}
    \caption{Representing $\ell(t)$ as a graph for $t$ near $t_0$.} \label{fig:timing1}
\end{figure}
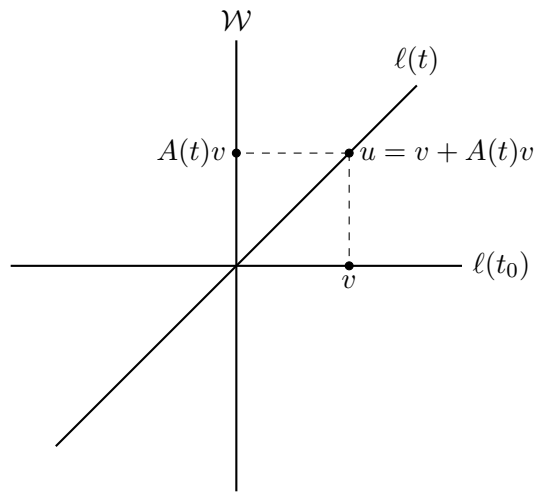




In particular, suppose we are given a Lagrangian subspace $\ell(t)$ with frame 
$$ L(t) = \begin{pmatrix} | &  & | \\ v_1(t) & \dots & v_n(t) \\ | &  & | 
\end{pmatrix}.$$
For $i = 1,\dots, n$, fix $k_i \in \R$, and define $v = \sum_{i = 1}^n k_i v_i(t_0) \in \ell(t_0)$. Then, there is a matrix $A(t): \ell(t_0) \to \mathcal W$ and functions $c_i(t): \R \to \R$, such that for all $v \in \ker A(t_0)$ and $t \in (t_0 - \epsilon, t_0 + \epsilon)$,
\begin{equation}\label{eq: graph A satisfies equation}
\begin{split}
v + A(t)v \ & = \sum_{i=1}^n c_i(t)v_i(t),  \\
& = L(t)c(t), \qquad c(t):=\big(c_1(t), \dots, c_n(t)\big)^T.\\
\end{split}
\end{equation}
The coefficient functions $c_i(t)$ may depend on $t$ and the constant coefficients $k_i$ that determine $v$.  See Section \ref{subsec: explicit example} for an explicit example. 


\subsection{The Crossing Form and the Maslov Index}\label{subsec: crossing form}
We now introduce the terminology and framework developed in \cite{robbinsalamon} to understand the evolution of the Lagrangian subspace $\ell(t)$ with respect to another fixed Lagrangian plane. 
\begin{definition}\label{def: first order quadratic form} \cite{robbinsalamon} Suppose that $\ell(t)$ is a smooth path of Lagrangian planes in $\R^{2n}$. Fix $t_0$ and choose $\mathcal W$ such that $\mathcal W \cap \ell(t_0) = \{0\}$. For $v \in \ell(t_0)$ and $t$ near $t_0$ define the operator $A(t): \ell(t_0) \to \mathcal W$ such that its graph is $\ell(t)$. In particular, for all $v \in \ell(t_0)$, we have $v + A(t)v \in \ell(t)$.
Then, the first order quadratic form associated with $\ell(t)$ at $t = t_0$ acting on the vector $v$ is defined as 
\begin{equation}\label{eq: first order quad from}
Q^{(1)}(v) = \frac{d}{dt}\omega(v,A(t)v) \bigg|_{t = t_0}.
\end{equation}
\end{definition}

The following result from \cite{robbinsalamon} concerns the first order quadratic form. 
\begin{theorem}\cite{robbinsalamon}[Theorem 1.1]\label{thm: RS main result}
Let $\ell(t) \in \Lambda(n)$ be a $C^1$ path of Lagrangian planes with frame $L(t) = \begin{pmatrix} X(t) & Y(t) \end{pmatrix}$. For fixed $t_0$ and $v \in \ell(t_0)$, let $Q^{(1)}(v)$ be as in Definition \ref{def: first order quadratic form}. 
\begin{enumerate}[a)]
\item The first order quadratic form is invariant under symplectic changes of coordinates. 
\item The first order quadratic form is independent of the choice of plane $\mathcal W$. 
\item Let $v = L(t_0)u$ with $v \in \R^{2n}$ and $u \in \R^n$. Then $Q^{(1)}(v)$ can be calculated via the formula
$$ Q^{(1)}(v) = \big \langle X(t_0) u, \dot Y(t_0) u \big \rangle - \big \langle Y(t_0) u, \dot X(t_0)  u \big \rangle.$$
\end{enumerate}
\end{theorem}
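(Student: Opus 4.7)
My plan is to prove the three parts in the order (c), (b), (a), because the explicit formula in (c) immediately makes both the independence of $\mathcal{W}$ and symplectic invariance transparent. The one genuine computation is part (c); parts (a) and (b) are then essentially inspections of the resulting expression.

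For part (c), I would start from the graph representation established in \eqref{eq: graph A satisfies equation}, writing $v + A(t)v = L(t)c(t)$ on a neighborhood of $t_0$. Because $A(t_0)=0$ and $L(t_0)$ has full column rank, $c(t_0)=u$. Using the antisymmetry identity $\omega(v,v)=0$, one has
\begin{equation*}
\omega(v, A(t)v) \;=\; \omega\bigl(v,\; L(t)c(t) - v\bigr) \;=\; \omega\bigl(v,\; L(t)c(t)\bigr).
\end{equation*}
Differentiating at $t=t_0$ gives two terms. The term $\omega\bigl(v, L(t_0)\dot c(t_0)\bigr)$ vanishes because both arguments lie in the Lagrangian plane $\ell(t_0)$, so only
$$
Q^{(1)}(v) \;=\; \omega\bigl(L(t_0)u,\; \dot L(t_0) u\bigr)
$$
survives. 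Writing $L=(X;Y)^T$ in block form gives $JL=(Y;-X)^T$, and unpacking $\omega(\,\cdot\,,\cdot)=\langle\,\cdot\,, J\cdot\rangle$ in the standard inner product yields exactly the formula claimed in (c).

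For (b), the right-hand side of the formula in (c) depends only on the frame $L(t)$ of $\ell(t)$ and on the coordinate vector $u$ — the auxiliary plane $\mathcal{W}$ does not appear at all. Thus the value $Q^{(1)}(v)$ cannot depend on the choice of $\mathcal{W}$. For (a), a symplectic change of variables $\Phi$ with $\Phi^T J \Phi = J$ sends $L(t) \mapsto \Phi L(t)$ and $v \mapsto \Phi v = \Phi L(t_0)u$; substituting into the formula and using $\omega(\Phi\,\cdot\,,\Phi\,\cdot\,) = \omega(\,\cdot\,,\cdot)$ collapses everything back to $\omega(L(t_0)u, \dot L(t_0)u) = Q^{(1)}(v)$.

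The main obstacle is making rigorous the graph representation used above: namely, that for $t$ near $t_0$ there really is a smooth operator $A(t)\colon \ell(t_0)\to\mathcal{W}$ with $A(t_0)=0$ whose graph is $\ell(t)$, together with smooth coefficients $c(t)$ satisfying $v+A(t)v=L(t)c(t)$. This follows from the openness of transversality — if $\mathcal{W}\cap\ell(t_0)=\{0\}$ then the direct sum decomposition $\mathbb R^{2n}=\ell(t)\oplus\mathcal{W}$ persists for $t$ in a neighborhood of $t_0$ — together with the implicit function theorem applied to the frame $L(t)$ to extract $c(t)$ smoothly. Once this foundational step is handled, the rest of the argument is purely algebraic manipulation of the symplectic form.
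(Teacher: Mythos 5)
Your proposal is correct, and the order (c) $\Rightarrow$ (b) $\Rightarrow$ (a) is a legitimate and efficient route for the first-order statement. The computation for (c) is sound: $c(t_0)=u$ because $A(t_0)=0$ and $L(t_0)$ is injective, the term $\omega(v, L(t_0)\dot c(t_0))$ dies by the Lagrangian property of $\ell(t_0)$, and unpacking $\omega(L(t_0)u,\dot L(t_0)u)$ in block form gives exactly $\langle X(t_0)u,\dot Y(t_0)u\rangle-\langle Y(t_0)u,\dot X(t_0)u\rangle$. The paper itself does not prove this cited theorem; what it proves is the generalization of parts (a) and (b) to order-$j$ forms (Theorem \ref{thm: higher order quad form RS extension}), and there the strategy is different in a way worth noting. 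The paper shows that the \emph{undifferentiated} function $t\mapsto\omega(v,A(t)v)$ is itself invariant under symplectic maps (via $A_\Psi=\Psi A\Psi^{-1}$ and $\Psi^{-1}=J^{-1}\Psi^TJ$) and independent of $\mathcal W$ (via the normal form $\ell(t_0)=\mathrm{colspan}\begin{pmatrix}I_n&0\end{pmatrix}^T$, $\mathcal W=\mathrm{colspan}\begin{pmatrix}B&I_n\end{pmatrix}^T$, which exposes $\omega(v,A(t)v)=\langle x,A_{21}(t)x\rangle$ with $A_{21}$ depending only on $\ell(t)$), so that \emph{all} derivatives inherit the invariance at once. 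Your shortcut buys a shorter proof at order one, but it does not generalize directly: for $j\ge 2$ the Leibniz expansion of $\frac{d^j}{dt^j}\omega(v,L(t)c(t))$ produces terms containing $c^{(k)}(t_0)$ for $k\ge 1$, and those coefficients \emph{do} depend on $\mathcal W$ (this is precisely the subtlety the paper flags in Remark \ref{rmk: graphing error}). So your argument is complete for the theorem as stated, provided you carry out the foundational step you identify at the end (smoothness of $A(t)$ and hence of $c(t)=(L(t)^TL(t))^{-1}L(t)^T(v+A(t)v)$), but be aware it is a first-order-only device.
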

\begin{remark}\label{rmk: symplectic coordinates reference}
We refer the reader to \cite{mcduffsalamon17}[Chapter 2] for a review of symplectic coordinates. For this work, it is sufficient to recall that a $2n \times 2n$ matrix $\Psi$ is symplectic if $\Psi^TJ\Psi = J$ and that a symplectic change of coordinates is done via a symplectic transition matrix.
\end{remark}

Fix a subspace $\ell_*$ such that $\ell_* \cap \ell(t_0) \neq \{0\}$. If we restrict the first order quadratic form to $v \in \ell(t_0) \cap \ell_*$, then we obtain the first order crossing form. Before giving this definition, we first formalize the notion of a crossing. 

\begin{definition}\cite{Arnold} A crossing for the path $\ell(t) \in \Lambda(n)$ with respect to a fixed $\ell_*$ is a value $t_0 \in [a,b]$ such that $\ell(t_0)\cap \ell_* \neq \{0\}.$ The plane $\ell_*$ is often called a reference plane. We use the following terminology to describe crossings:
\begin{enumerate}[a)]
\item a crossing is simple if $\ell(t_0) \cap \ell_*$ is a one dimensional subspace of $\R^{2n}$;
\item a crossing is isolated if there exists $\epsilon > 0$ such that $\ell(t) \cap \ell_* = \{0\}$ for all $t \in [t_0 - \epsilon, t_0 + \epsilon]\setminus\{t_0\}$.
\end{enumerate}
\end{definition}

Now we are in a position to define the crossing form and associated terminology.
\begin{definition}\label{def: first order crossing form}\cite{robbinsalamon}
The first order crossing form with respect to $\ell(t)$ and $\ell_*$ is defined to be the first order quadratic form restricted to $\ell(t_0) \cap \ell_*$:
\begin{equation}\label{eq: first order crossing form}
\Gamma(\ell, \ell_*, t_0) :=  Q^{(1)}(v), \qquad v \in \ell(t_0) \cap \mathcal \ell_*.
\end{equation}
We say a crossing $t_0$ is regular or non-degenerate if the crossing form $\Gamma$ is non-singular.

\end{definition} 

Our notation for this quadratic form differs slightly from that in \cite{robbinsalamon} due to the addition of the superscript in $Q^{(1)}$. We introduce this extra notation to denote the first derivative contained in this quadratic form. This will be useful later in Section \ref{section: maslov index for nonregular crossings} when we take higher order derivatives to construct higher order quadratic forms. The crossing form can be used to explore the signed intersections of $\ell(t)$ with a reference plane, $\mathcal \ell_*$. If $\Pi$ denotes the projection operator onto $\ell_* \cap \ell(t_0)$, then the matrix that determines $Q^{(1)}(v)$ is $\Pi J \dot A(t_0)\Pi$; this follows from Definition \ref{def: first order crossing form} and the restriction of $v \in \ell(t_0) \cal \ell_*$. Proposition \ref{prop: Structure of A} says this matrix is symmetric so all of its eigenvalues are real. 

The signature of a matrix $B$ is defined as 
$ \text{sign} B = p -q $ where $p$ is the number of positive eigenvalues and $q$ is the number of negative eigenvalues of $B$. We define the signature of a quadratic form as the signature of the matrix that determines the quadratic form. For this reason, we write $\text{sign} Q^{(1)}$ and do not specify the argument because this signature does not depend on the argument $v$.

\begin{definition}\label{def: Mas index} \cite{robbinsalamon,duistermaat} Following the setting and notation of Definitions \ref{def: first order quadratic form} and \ref{def: first order crossing form}, define $\Gamma^+(\ell, \ell_*,t)$ and $\Gamma^-(\ell, \ell_*, t)$ to the be right and left derivatives of $\omega(v,A(t)v)$ respectively. For a smooth curve $\ell: [a,b] \to \Lambda(n)$ with only regular crossings, we define the Maslov index to be 
$$ \text{Mas} (\ell(t), \ell_*; \ a \leq t \leq b) = \frac{1}{2} \text{sign } \Gamma^+ (\ell, \ell_*, a) + \sum_{a < t < b} \text{sign } \Gamma(\ell, \ell_*, t) + \frac{1}{2}\text{sign } \Gamma^- (\ell, \ell_*, b),$$
where the sum is taken over all crossings $t$. 
\end{definition}

The important features of the Maslov index for this work are given below:
\begin{lemma}\label{lemma: homotopic to regular}\cite{robbinsalamon}
Suppose that $\ell: [a,b] \to \Lambda(n)$ is a smooth path of Lagrangian subspaces and $\ell_*$ is a fixed reference plane. 
\begin{enumerate}[a)]
\item (Additivity) For $a < c < b$
$$ \text{Mas} \left(\ell(t), \ell_*; \ a\leq t \leq b\right) = \text{Mas} \left(\ell(t), \ell_*; \ a\leq t \leq c\right) + \text{Mas} \left(\ell(t), \ell_*; \ c\leq t \leq b\right).$$
\item (Homotopy invariance) Two paths $\ell_1, \ell_2: [a,b] \to \Lambda(n)$ with $\ell_1(a) = \ell_2(a)$ and $\ell_1(b) = \ell_2(b)$ are homotopic with fixed endpoints if and only if they have the same Maslov index with respect to a fixed reference plane $\ell_*$.
\end{enumerate}
\end{lemma}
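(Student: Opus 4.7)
The plan is to prove part (a) by direct bookkeeping from Definition \ref{def: Mas index}, and part (b) by a transversality/signed-cobordism argument in one direction, combined with the classical computation of $\pi_1(\Lambda(n))$ in the other.

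For additivity, I would split the interior sum at $c$. If $c$ is not a crossing, the added half-signatures $\tfrac{1}{2}\text{sign }\Gamma^\pm(\ell,\ell_*,c)$ both vanish and additivity is immediate. If $c$ is a crossing, then by the standing assumption the crossing is regular, and the $C^1$ smoothness of the path together with non-degeneracy of $\Gamma(\ell,\ell_*,c)$ forces $\Gamma^-(c) = \Gamma^+(c) = \Gamma(c)$, so
\[
\tfrac{1}{2}\text{sign }\Gamma^-(\ell,\ell_*,c) + \tfrac{1}{2}\text{sign }\Gamma^+(\ell,\ell_*,c) = \text{sign }\Gamma(\ell,\ell_*,c),
\]
exactly reproducing the contribution of $c$ to the undivided interval.

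For the forward direction of homotopy invariance I would first argue that the Maslov cycle $\Sigma = \{\ell \in \Lambda(n) : \ell \cap \ell_* \neq \{0\}\}$ is a stratified subvariety whose top stratum $\Sigma^1$ (where $\dim(\ell \cap \ell_*) = 1$) is a smooth cooriented hypersurface in $\Lambda(n)$; the coorientation at a transverse crossing is given by the sign of the crossing form, which by Theorem \ref{thm: RS main result} is intrinsic. Given a smooth homotopy $H : [0,1] \times [a,b] \to \Lambda(n)$ with fixed endpoints lying outside $\Sigma$, I would perturb $H$ rel boundary so that it is transverse to $\Sigma^1$ and disjoint from the higher-codimension strata $\Sigma^k$ (where $\dim(\ell\cap\ell_*) = k$), which have codimension $k(k+1)/2 \geq 3$ in $\Lambda(n)$ for $k\geq 2$, so a generic two-parameter family misses them. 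Then $H^{-1}(\Sigma^1)$ is a compact smooth $1$-submanifold of $[0,1] \times [a,b]$ with boundary contained in $\{0,1\}\times[a,b]$; signed intersection with a horizontal slice $\{s\}\times[a,b]$ recovers $\text{Mas}(H(s,\cdot),\ell_*)$, and invariance of signed intersection under $1$-dimensional cobordism gives equality of the two Maslov indices. For the converse direction, I would invoke the classical identifications $\Lambda(n) \cong U(n)/O(n)$ and $\det{}^2 : \Lambda(n) \to S^1$, which realize an isomorphism $\pi_1(\Lambda(n)) \cong \mathbb{Z}$ under which the Maslov index of a loop is the winding number of $\det{}^2$; two paths sharing fixed endpoints and Maslov index concatenate (one reversed) to a null-homotopic loop and are therefore homotopic rel endpoints.

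The hard part is the transversality step in the forward direction: one must both produce a suitable generic perturbation and control the stratification of $\Sigma$ finely enough to verify that the higher-codimension strata can be avoided by a generic two-parameter family. The additivity argument, by contrast, is essentially pure bookkeeping once one uses that $C^1$ paths have matching one-sided derivatives at regular crossings.
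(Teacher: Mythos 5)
The paper does not actually prove this lemma: it is imported verbatim from Robbin--Salamon, so there is no internal argument to compare yours against. Your sketch is correct in outline, but it follows the Arnold--Duistermaat route (coorient the top stratum of the Maslov cycle $\Sigma$ by the sign of the crossing form, perturb a fixed-endpoint homotopy to be transverse to $\Sigma^1$ and disjoint from the codimension-$\geq 3$ strata, read off the index as a signed intersection number invariant under $1$-dimensional cobordism, and use $\det^2:\Lambda(n)\to S^1$ with $\pi_1(\Lambda(n))\cong\mathbb{Z}$ for the converse), whereas Robbin and Salamon obtain both properties from an axiomatic characterization of the index built on the localization/spectral-flow identity (quoted here as Theorem \ref{thm: spectral flow RS}), symplectic naturality, and reduction to $n=1$ normal forms. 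The two approaches buy different things: yours makes the topological meaning of the index transparent and explains why only simple regular crossings matter generically; theirs avoids transversality entirely and adapts more readily to the degenerate-crossing setting this paper actually needs. Your additivity bookkeeping is exactly right. Two caveats if you were to write the homotopy argument out in full: (i) Definition \ref{def: Mas index} permits crossings at $t=a,b$ (entering with weight $\tfrac{1}{2}$), while your slice-counting argument tacitly assumes the fixed endpoints lie off $\Sigma$, so the boundary case requires threading the half-weight convention through the cobordism count; (ii) perturbing $H$ rel boundary leaves the boundary slices untouched, so if $\ell_1$ or $\ell_2$ has a regular but non-simple crossing (hence meets a higher stratum) you must first justify, via localization and stability of the signature, that a small homotopy to a path with only simple transverse crossings preserves the index before the intersection count applies to the original paths.
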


We now discuss a formulation of the Maslov index from a spectral flow perspective. This formulation will be useful in the extension of the crossing form to non-regular crossings because it yields a definition of the Maslov index that does not rely on the non-degeneracy of the first order crossing form. Suppose that $\dim(\ell(t_0) \cap \ell_*) = k$. Then the matrix $\Pi J A(t_0)\Pi $ has a $k$-dimensional kernel and the signature of the matrix $\Pi J \dot A(t_0)\Pi$ tracks how many positive eigenvalues $\Pi J A(t)\Pi$ gains or loses as $t$ increases through $t_0$, which will be an integer in $\{-k,\dots, k\}$. Therefore, we can understand the contribution of a crossing to the Maslov index by studying the spectrum of $\Pi J A(t)\Pi$ for $t$ near $t_0$. We formalize this by following the work done in \cite{furutani}. We first have the following technical lemma. 

\begin{lemma}\cite{furutani}[Lemma 3.23]\label{lemma: furutani Q1} Fix $t_0$ and let $\{B(t)\}$ be a $C^1$ family of self-adjoint operators on $\R^{2n}$ for $t$ near $t_0$. Suppose that the set of $t$ for which $B(t)$ has a zero eigenvalue is comprised of singletons and that the quadratic form 
$$R^{(1)}(x) = \frac{d}{dt}\langle x, B(t) x\rangle \bigg|_{t = t_0} = \left\langle x, \dot B(t_0) x \right\rangle, \qquad x \in \ker B(t_0)  $$
is non-degenerate. Furthermore, suppose $\dim \ker B(t_0) = k$. Then, there exists $p, q \in \N \cup \{0\}$ such that $p + q = k$ and $\epsilon, \delta > 0$ such that $\text{sign} D^{(1)} = \text{sign} \dot B(t_0) = p-q$. Furthermore,
\begin{enumerate}[i)]
\item for $t_0 < t \leq t_0+ \delta$, there are $p$ positive and $q$ negative eigenvalues of $B(t)$ within $\epsilon$ of $0$; 
\item for $t_0-\delta \leq t < t_0$, there are $q$ positive eigenvalues and $p$ negative eigenvalues of $B(t)$ within $\epsilon$ of $0$.
\end{enumerate} 
\end{lemma}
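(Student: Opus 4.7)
The plan is to reduce this to a finite-dimensional spectral perturbation statement on the kernel $\ker B(t_0)$, using a smooth family of spectral projections to isolate the $k$ eigenvalues of $B(t)$ that are near zero for $t$ near $t_0$.

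First, I would invoke standard finite-dimensional perturbation theory for $C^1$ families of self-adjoint matrices. Since $B(t_0)$ has $0$ as an eigenvalue of multiplicity exactly $k$, the remaining $2n-k$ eigenvalues of $B(t_0)$ are bounded away from $0$. By continuity of the spectrum, there exist $\epsilon>0$ and $\delta_0>0$ such that for $|t-t_0|<\delta_0$, the spectrum of $B(t)$ decomposes into exactly $k$ eigenvalues (counted with multiplicity) inside $(-\epsilon,\epsilon)$ and the remaining $2n-k$ eigenvalues outside $[-2\epsilon,2\epsilon]$. Let $P(t)$ denote the orthogonal spectral projection of $B(t)$ onto the span of these $k$ small eigenvalues; $P(t)$ can be written via a Riesz integral around a contour in $(-\epsilon,\epsilon)$ and is therefore a $C^1$ family of rank-$k$ projections with $P(t_0)$ equal to the orthogonal projection onto $\ker B(t_0)$.

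Next, I would introduce a $C^1$ family of unitaries $U(t)\colon \ker B(t_0)\to \operatorname{ran} P(t)$ with $U(t_0)=\mathrm{Id}$, for example by solving the Kato parallel transport ODE $\dot U(t) = [\dot P(t),P(t)]U(t)$. Define the reduced family of self-adjoint operators on the fixed $k$-dimensional space $\ker B(t_0)$ by
\[
\hat B(t) := U(t)^* B(t) U(t).
\]
The eigenvalues of $\hat B(t)$ are precisely the $k$ eigenvalues of $B(t)$ inside $(-\epsilon,\epsilon)$, so tracking signs of one is equivalent to tracking signs of the other.

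The key computation is $\hat B(t_0)=0$ together with $\dot{\hat B}(t_0) = \Pi_0\,\dot B(t_0)\,\Pi_0$, where $\Pi_0=P(t_0)$. Differentiating the product and using $B(t_0)\Pi_0 = \Pi_0 B(t_0) = 0$ kills the terms arising from $\dot U(t_0)$, leaving only the middle term. Identifying the quadratic form $R^{(1)}$ on $\ker B(t_0)$ with the symmetric matrix $\Pi_0 \dot B(t_0)\Pi_0$, the non-degeneracy hypothesis asserts that this matrix is invertible on $\ker B(t_0)$; hence it has $p$ positive and $q$ negative eigenvalues with $p+q=k$ and $\operatorname{sign}R^{(1)}=p-q$.

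Finally, a Taylor expansion gives $\hat B(t) = (t-t_0)\,\dot{\hat B}(t_0)+o(t-t_0)$, so by Lipschitz continuity of eigenvalues of $C^1$ self-adjoint families in finite dimensions, after shrinking $\delta\le\delta_0$ the $k$ eigenvalues of $\hat B(t)$ for $t_0<t<t_0+\delta$ agree in sign with those of $\dot{\hat B}(t_0)$, giving $p$ positive and $q$ negative eigenvalues of $B(t)$ inside $(-\epsilon,\epsilon)$; reversing the sign of $t-t_0$ swaps these counts. The hypothesis that the set of $t$ where $B(t)$ has a zero eigenvalue consists of singletons lets me further shrink $\delta$ to ensure no eigenvalue of $\hat B(t)$ is exactly zero for $0<|t-t_0|<\delta$, so the counts are stable. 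The main obstacle is the derivative computation $\dot{\hat B}(t_0)=\Pi_0\dot B(t_0)\Pi_0$: it is tempting (but wrong) to simply project $\dot B(t)$ onto the moving subspace $\operatorname{ran} P(t)$ and evaluate at $t_0$, and the clean identification with $R^{(1)}$ only works because $B(t_0)$ annihilates $\ker B(t_0)$, killing the frame-change contribution.
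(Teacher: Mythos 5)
Your proof is correct. A point of context: the paper does not prove this lemma itself --- it is quoted from Furutani --- but it does prove the higher-order generalization in Lemma \ref{lemma: deng/jones general ops}, so that is the natural object of comparison, and your route is genuinely different from the paper's. You reduce to the fixed $k$-dimensional space $\ker B(t_0)$ via Kato parallel transport, set $\hat B(t) = U(t)^*B(t)U(t)$, observe that $\hat B(t_0)=0$ and $\dot{\hat B}(t_0)=\Pi_0\dot B(t_0)\Pi_0$ (the two cross terms involving $\dot U(t_0)$ vanish because $B(t_0)$ kills $\ker B(t_0)$ on the right and, by self-adjointness, on the left), and then conclude from the first-order Taylor expansion $\hat B(t)=(t-t_0)\dot{\hat B}(t_0)+o(t-t_0)$ together with Weyl's eigenvalue perturbation inequality: since the nondegeneracy hypothesis makes the eigenvalues of $\dot{\hat B}(t_0)$ bounded away from zero, the $o(t-t_0)$ error cannot change their signs. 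The paper instead works directly with the compression $P_tB(t)P_t$ and proves the rescaled limit $j!\langle P_{t^j}u, t^{-j}B(t)P_{t^j}u\rangle \to \langle u,B^{(j)}(t_0)u\rangle$ (Lemma \ref{lemma: general crossing intermediate lemma}, resting on the projection estimate of Lemma \ref{claim: proj speed}), reading off the signature from the sign-invariance of the compression under positive rescaling. For the first-order statement the two arguments are equivalent and yours is arguably cleaner; the payoff of the paper's rescaling device is that it passes to order $j>1$, where the Taylor expansion of $\hat B$ acquires cross terms such as $\dot U(t_0)^*\dot B(t_0)U(t_0)$ that no longer vanish (the hypothesis only kills $\Pi_0 B^{(i)}(t_0)\Pi_0$ for $i<j$, not $B^{(i)}(t_0)$ acting on vectors leaving the kernel) and would require separate control. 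Your closing remark correctly identifies the one genuinely delicate step, and your use of the singleton hypothesis at the end is harmless but not needed, since nonvanishing of the small eigenvalues for $0<|t-t_0|<\delta$ already follows from the nondegeneracy of $R^{(1)}$.
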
 
\begin{remark} The condition that $\{t_0 \ : \lambda(t_0) = 0\}$ in Lemma \ref{lemma: furutani Q1} is comprised of singletons is equivalent to the condition that the crossing at $t_0$ is isolated. 
\end{remark}

It is straightforward to relate Lemma \ref{lemma: furutani Q1} to the motion of the eigenvalues of $B(t)$ as $t$ increases through $0$. 

\begin{corollary}\label{corollary: regular crossing evals} Suppose we are in the setting of Lemma \ref{lemma: furutani Q1}. If $\text{sign }R^{(1)} = p-q > 0$, 
 then the sign of the matrix $B(t)$ increases by $p-q$ as $t$ increases through $0$. If $\text{sign }R^{(1)} = p-q < 0$, then the sign of $B(t)$ decreases by $p-q$ as $t$ increases through $0$. 
\end{corollary}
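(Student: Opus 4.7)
The plan is to observe that Corollary \ref{corollary: regular crossing evals} is an essentially immediate bookkeeping consequence of the eigenvalue picture established in Lemma \ref{lemma: furutani Q1}, combined with the fact that eigenvalues of a continuous self-adjoint family cannot change sign without first passing through zero. I would not attempt any independent perturbation argument, since all of the nontrivial analytic content has been packaged into Lemma \ref{lemma: furutani Q1}; the corollary merely reinterprets that content in terms of $\text{sign } B(t)$.

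First I would record the setup. Since $B(t)$ is $C^1$ and self-adjoint, its eigenvalues depend continuously on $t$, so any eigenvalue of $B(0)$ that is nonzero is bounded away from zero in a sufficiently small neighborhood of $t = 0$ and retains its sign there. Consequently, $\text{sign } B(t)$ can change only at values of $t$ where $B(t)$ has a zero eigenvalue, and by hypothesis such $t$ form a discrete set. Thus there is a $\delta > 0$ such that $B(t)$ is invertible for $0 < |t| < \delta$ and the only contributions to the change in $\text{sign } B(t)$ between $(-\delta,0)$ and $(0,\delta)$ come from the $k = \dim \ker B(0)$ eigenvalues that vanish at $t = 0$.

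Next I would invoke Lemma \ref{lemma: furutani Q1} at $t_0 = 0$ and read off the consequence for the signature. The lemma supplies nonnegative integers $p, q$ with $p + q = k$ and $\text{sign } R^{(1)} = p - q$, and states that for $0 < t < \delta$ the $k$ small eigenvalues split as $p$ positive and $q$ negative, while for $-\delta < t < 0$ they split as $q$ positive and $p$ negative. Their contribution to $\text{sign } B(t)$ is therefore $q - p$ just below $0$ and $p - q$ just above $0$, while all other eigenvalues contribute the same amount on both sides. Hence the signature moves monotonically in the direction determined by $p - q = \text{sign } R^{(1)}$: it increases across $t = 0$ when $p - q > 0$ and decreases when $p - q < 0$, which is the assertion of the corollary.

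There is no real obstacle here. The only item to verify beyond Lemma \ref{lemma: furutani Q1} is the standard observation that nonzero eigenvalues of a $C^1$ family of self-adjoint operators retain their sign locally, which follows from continuity of the spectrum and the openness of the conditions ``positive eigenvalue'' and ``negative eigenvalue''. The point of stating the result as a separate corollary is to package this reinterpretation for later use, where we will track the signature (rather than the local eigenvalue distribution) in order to identify crossing-form signs with contributions to the Maslov index.
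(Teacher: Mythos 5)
Your argument is correct and takes essentially the same route as the paper, which offers no separate proof and treats the corollary as an immediate reinterpretation of Lemma \ref{lemma: furutani Q1}; the only content you add is the (standard, and worth recording) observation that the nonzero eigenvalues of the $C^1$ self-adjoint family keep their signs near $t=0$, so only the $k$ vanishing eigenvalues can affect the signature. One small caveat: as your own computation shows, the signature itself jumps by $2(p-q)$ across $t=0$ (the small eigenvalues contribute $q-p$ below and $p-q$ above), whereas the quantity that changes by exactly $p-q$ is the number of positive eigenvalues; the phrase ``increases by $p-q$'' in the corollary is to be read in that latter sense, as the surrounding discussion and the factor of $\tfrac12$ in Theorem \ref{thm: spectral flow RS} make clear, so you should state explicitly which of the two quantities you are tracking.
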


Suppose the graph of $A(t): \ell(t_0) \to \mathcal W$ is $\ell(t)$ for $t$ near $t_0$ and define $B(t) = JA(t)$ restricted to $\ell(t_0) \cap \ell_*$. Recall that the projection operator onto $\ell(t_0) \cap \ell_*$ is given by $\Pi$. Then via Corollary \ref{corollary: regular crossing evals}, the sign of the crossing form at a crossing $t = t_0$ satisfies
$$ \text{sign}\Gamma(\ell, \ell_*, t_0) = \text{sign}Q^{(1)} = \text{sign}\Pi J\dot A(t_0) \Pi  = p-q,$$
which is precisely the number of positive eigenvalues that $\Pi JA(t)\Pi $ gains or loses as $t$ passes through $t = t_0$. This result is captured in the localization property of the Maslov index. The below theorem allows us to define the Maslov index in terms of the eigenvalues of $A(t)$ for $t$ near $t_0$ instead of relying on the derivative $\dot A(t_0)$.
\begin{theorem}\cite{robbinsalamon}[Theorem 2.3] \label{thm: spectral flow RS}
Suppose that $t_0$ is an isolated crossing of $\ell(t): [a,b] \to \Lambda(n)$ with reference plane $\ell_*$. Then if $t_0 \in (a,b)$, we have for sufficiently small $\epsilon$, the Maslov index of $\ell: [t_0 - \epsilon, t_0 + \epsilon] \to \Lambda(n)$ is given by the spectral flow formulation 
$$ \text{Mas}(\ell, \ell_*; |t-t_0| \leq \epsilon) = \frac{1}{2}\text{sign}\Pi JA(t_0 + \epsilon)\Pi - \frac{1}{2}\text{sign}\Pi JA(t_0 - \epsilon)\Pi.$$
If $t_0$ is equal to $a$ or $b$, then 
\begin{align*} \text{Mas}(\ell, \ell_*; \ t_0 \leq  t \leq t_0 + \epsilon) & = \frac{1}{2}\text{sign}\big[\Pi JA(t_0 +\epsilon)\Pi\big] - \frac{1}{2}\text{sign}\big[\Pi JA(t_0)\Pi \big] \\
\text{Mas}(\ell, \ell_*; \ t_0 -\epsilon \leq  t \leq t_0 ) & = \frac{1}{2}\text{sign}\big[\Pi JA(t_0)\Pi \big]  - \frac{1}{2}\text{sign}\big[\Pi JA(t_0 - \epsilon)\Pi\big].
\end{align*}
\end{theorem}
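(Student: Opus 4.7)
The plan is to recast $B(t) := \Pi J A(t) \Pi$, restricted to the finite-dimensional subspace $\ell(t_0)\cap \ell_*$, as a smooth one-parameter family of self-adjoint operators whose spectral flow through $0$ at $t=t_0$ is precisely the Maslov contribution of the crossing. With that identification in hand, the theorem should follow from Lemma \ref{lemma: furutani Q1} together with Definition \ref{def: Mas index}, by comparing signatures of $B(t_0+\epsilon)$ and $B(t_0-\epsilon)$.

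First I would fix a Lagrangian complement $\mathcal{W}$ of $\ell(t_0)$ so that $A(t):\ell(t_0)\to \mathcal{W}$ is defined on a neighborhood of $t_0$ with $A(t_0)=0$. Invariance of $Q^{(1)}$ under the choice of $\mathcal{W}$ (Theorem \ref{thm: RS main result}(b)) guarantees that the final formula is independent of this choice. Using the structural symmetry of $J A(t)$ on $\ell(t_0)$ (Proposition \ref{prop: Structure of A} referenced in the text), $B(t)$ is a self-adjoint $k\times k$ operator on $\ell(t_0)\cap \ell_*$, where $k=\dim \ell(t_0)\cap \ell_*$, and $B(t_0)=0$ so $\ker B(t_0)$ is all of $\ell(t_0)\cap \ell_*$. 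Differentiating at $t_0$ gives
\[
R^{(1)}(x) \;=\; \langle x, \dot B(t_0) x\rangle \;=\; \omega(x,\dot A(t_0)x) \;=\; Q^{(1)}(x) \;=\; \Gamma(\ell,\ell_*,t_0)(x), \qquad x\in \ker B(t_0),
\]
so the regularity of the crossing supplies the hypothesis needed to invoke Lemma \ref{lemma: furutani Q1} with $p+q=k$ and $\mathrm{sign}\,\Gamma = p-q$.

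Next I would apply Lemma \ref{lemma: furutani Q1} to read off the signatures on either side of $t_0$. Since the crossing is isolated, $B(t)$ is nonsingular for $0<|t-t_0|\le \delta$, so the signature near $t_0$ is fully determined by the $k$ eigenvalues emerging from $0$; for $t\in(t_0,t_0+\delta]$ there are $p$ positive and $q$ negative such eigenvalues, giving $\mathrm{sign}\,B(t_0+\epsilon)=p-q$, and for $t\in[t_0-\delta,t_0)$ they flip, giving $\mathrm{sign}\,B(t_0-\epsilon)=q-p$. Plugging into the interior formula,
\[
\tfrac12\mathrm{sign}\,\Pi J A(t_0+\epsilon)\Pi \;-\; \tfrac12\mathrm{sign}\,\Pi J A(t_0-\epsilon)\Pi \;=\; \tfrac12\bigl[(p-q)-(q-p)\bigr] \;=\; p-q,
\]
which matches $\mathrm{Mas}(\ell,\ell_*;|t-t_0|\le\epsilon)=\mathrm{sign}\,\Gamma(\ell,\ell_*,t_0)$ from Definition \ref{def: Mas index}. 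The two endpoint formulas are handled the same way, using $B(t_0)=0$ (so $\mathrm{sign}\,\Pi J A(t_0)\Pi=0$) together with the definition of $\mathrm{Mas}$ at boundary crossings in terms of the one-sided forms $\Gamma^\pm$, each of which contributes only $\tfrac12(p-q)$.

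I expect the main obstacle to be justifying that the signature of $\Pi J A(t)\Pi$ on $\ell(t_0)\cap\ell_*$ really does encode only the local spectral flow at $t_0$, rather than picking up contamination from eigenvalues coming from directions outside $\ell(t_0)\cap\ell_*$, or from distant eigenvalues of a larger operator. This requires the careful bookkeeping that (i) the projection $\Pi$ commutes appropriately with the restriction because $\ell(t_0)\cap\ell_*$ is exactly the kernel of $B(t_0)$, (ii) the signature is a locally constant function of $t$ away from $t_0$ since no eigenvalue of $B(t)$ crosses $0$ there, and (iii) the smallness of $\epsilon$ can be chosen so that all $k$ eigenvalues emerging from $0$ stay inside the band of radius $\epsilon'$ in which Lemma \ref{lemma: furutani Q1} applies. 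Once these points are secured, the theorem reduces to the bookkeeping identity in the displayed equation above.
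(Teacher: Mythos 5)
First, note that the paper does not supply its own proof of this statement: it is quoted directly from Robbin--Salamon \cite{robbinsalamon}, so the only question is whether your argument actually establishes it in the generality in which it is stated and later used. There it has a genuine gap. Your entire derivation of $\mathrm{sign}\,\Pi JA(t_0\pm\epsilon)\Pi = \pm(p-q)$ rests on Lemma \ref{lemma: furutani Q1}, whose hypothesis is that the first-order form $R^{(1)}(x)=\langle x,\dot B(t_0)x\rangle$ is \emph{non-degenerate} on $\ker B(t_0)$ --- i.e.\ that the crossing is regular. But the theorem assumes only that the crossing is \emph{isolated}, and Remark \ref{rmk: spectral flow form of the maslov index} together with the proof of Theorem \ref{theorem: maslov gen tang} make clear that the whole point of this statement in the paper is that it holds at degenerate crossings, where $Q^{(1)}$ vanishes identically on $\ell(t_0)\cap\ell_*$ and Lemma \ref{lemma: furutani Q1} simply does not apply. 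So your proof covers exactly the case in which the theorem is not needed, and is silent in the case where it is.

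To close the gap you would need two additional ingredients. First, for a degenerate isolated crossing the left-hand side is not even defined by Definition \ref{def: Mas index}, which only treats paths with regular crossings; one must define $\mathrm{Mas}$ there by the homotopy-invariant extension (perturb $\ell$ on $[t_0-\epsilon,t_0+\epsilon]$ with fixed endpoints to a path having only regular crossings, using Lemma \ref{lemma: homotopic to regular}). Second, one must show the right-hand side is stable under such a perturbation: since the crossing is isolated, $\Pi JA(t_0\pm\epsilon)\Pi$ is nonsingular on $\ell(t_0)\cap\ell_*$, so its signature is locally constant and unchanged by a small fixed-endpoint homotopy, after which the regular-crossing computation you carried out (summed over the finitely many regular crossings of the perturbed path) yields the signature difference. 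Your closing paragraph gestures at point (ii) of this stability but does not engage with the fact that regularity was assumed, nor with the need to redefine the left-hand side; alternatively one could try to run your scheme with Lemma \ref{lemma: deng/jones general ops} in place of Lemma \ref{lemma: furutani Q1}, but that would make the logic circular, since the paper derives Theorem \ref{theorem: maslov gen tang} \emph{from} this theorem. The bookkeeping in the regular case, including the two endpoint formulas via $A(t_0)=0$, is correct.
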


\begin{remark}\label{rmk: spectral flow form of the maslov index}
It is important to note that Theorem \ref{thm: spectral flow RS} does not rely on the non-degeneracy of the crossing form because there is no differentiation. This fact will be useful in the proof of Theorem \ref{theorem: maslov gen tang} when we extend the definition of the Maslov index via crossing forms to degenerate crossings. Additionally, our formulation of Theorem \ref{thm: spectral flow RS} differs slightly from the formulation in \cite{robbinsalamon}. This is because we choose to represent $\ell(t)$ as the graph of a $2n \times 2n$ matrix in the standard coordinates on $\R^{2n}$ instead of changing coordinates and representing $\ell(t)$ as the graph of an $n \times n$ matrix. 
\end{remark}

\subsection{Extending the Crossing Form to Nonregular Crossings}\label{section: maslov index for nonregular crossings}
The formulation of the Maslov index discussed in the previous section only applies to regular crossings with the critical exception of Theorem \ref{thm: spectral flow RS}. We will show in Section \ref{S:count} that the Swift-Hohenberg equation has non-regular crossings and as a result, the formulation of the Maslov index using the crossing form that was previously introduced does not apply. A framework for computing the Maslov index for paths with second-order degenerate crossings has been established in the infinite dimensional case in \cite{dengjones11}. As we will see in in Section \ref{S:count}, the Swift-Hohenberg equation has third order degenerate crossings so we cannot directly apply a finite dimensional version of this result. We modify the result in \cite{dengjones11} and extend the theory introduced in Section \ref{subsec: crossing form} to degenerate crossings of general order in the finite dimensional case by using techniques similar to those used in \cite{furutani}. 

\begin{definition}\label{def: higher order quadratic forms}
Suppose that $\ell: [a,b] \to \Lambda(n)$ is a $C^m$ curve of Lagrangian subspaces and choose $\mathcal W$ such that $\ell(t_0) \cap  \mathcal W = \{0\}$ for fixed $t_0 \in [a,b]$. For  $v \in \ell(t_0)$ and $t$ near $t_0$ suppose the graph of $A(t): \ell(t_0) \to \mathcal W$ is $\ell(t)$ and fix $j \leq m$. Then, the quadratic form of order $j$ associated with $\ell(t)$ at $t = t_0$ acting on $v$ is defined as 
\begin{equation}\label{eq: nth order quadratic form}
Q^{(j)}(v) = \frac{d^j}{dt^j}\omega(v,A(t)v) \bigg|_{t = t_0}.
\end{equation}
Fix a Lagrangian plane $\ell_*$ and we suppose at $t_0$, $\ell(t_0) \cap \ell_* \neq \{0\}$. Then, the crossing form of order $j$ with respect to $\ell(t)$ and $\ell_*$ is defined to be 
\begin{equation}\label{eq: nth order crossing form}
 \Gamma^{(j)}(\ell, \ell_*, t_0) =  Q^{(j)}(v), \qquad v \in \ell(t_0) \cap \ell_*.
\end{equation}
\end{definition}

Below we prove a generalization of parts a) and b) of Theorem \ref{thm: RS main result} for degenerate crossings via a different approach than the one used in \cite{robbinsalamon}. Refer to Remark \ref{rmk: symplectic coordinates reference} for a brief discussion of symplectic coordinates. 


\begin{theorem}\label{thm: higher order quad form RS extension} Suppose we are in the setting of Definition \ref{def: higher order quadratic forms}. The higher order crossing form of order $j$ is
\begin{enumerate}[a)]
\item invariant under symplectic changes of coordinates; 
\item independent of the choice of $\mathcal W$. 
\end{enumerate}
\end{theorem}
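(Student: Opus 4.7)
The plan is to prove parts (a) and (b) separately, adapting the structure of the original Robbin--Salamon argument for the first-order case but exploiting the $j$-th order degeneracy to handle the higher derivatives.

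For part (a), let $\Psi$ be a linear symplectic map on $\R^{2n}$. Under $\Psi$, the Lagrangian path becomes $\tilde\ell(t) = \Psi\ell(t)$ and the complement becomes $\tilde{\mathcal W} = \Psi\mathcal W$. The first step is to derive the transformation rule for the graph operator: from the defining property $\Psi v + \tilde A(t)\Psi v \in \Psi\ell(t)$ together with uniqueness of the projection onto $\ell(t)$ along $\mathcal W$, one obtains $\tilde A(t)\Psi v = \Psi A(t) v$. Then symplectic invariance of $\omega$ gives
\begin{equation*}
\omega\bigl(\tilde v, \tilde A(t)\tilde v\bigr) = \omega\bigl(\Psi v, \Psi A(t)v\bigr) = \omega\bigl(v, A(t)v\bigr),
\end{equation*}
for all $t$ near $t_0$. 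Differentiating $j$ times at $t_0$ yields $\tilde Q^{(j)}(\tilde v) = Q^{(j)}(v)$, and restricting to $v \in \ell(t_0)\cap\ell_*$ gives the invariance of $\Gamma^{(j)}$.

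For part (b), fix two Lagrangian complements $\mathcal W_1, \mathcal W_2$ of $\ell(t_0)$ with corresponding graph operators $A_1, A_2$. The key algebraic identity is obtained by expanding $0 = \omega(v + A_1(t)v, v + A_2(t)v)$, which vanishes because both arguments lie in the Lagrangian plane $\ell(t)$, and using $\omega(v,v) = 0$:
\begin{equation*}
f_1(t) - f_2(t) = \omega\bigl(A_1(t)v, A_2(t)v\bigr), \qquad f_i(t) := \omega\bigl(v, A_i(t)v\bigr).
\end{equation*}
Taylor-expanding $A_i(t)v$ around $t_0$ using $A_i(t_0)v = 0$, and extracting the $j$-th derivative at $t_0$, gives
\begin{equation*}
Q_1^{(j)}(v) - Q_2^{(j)}(v) = \sum_{\substack{k+m=j\\ k,m \ge 1}} \binom{j}{k}\,\omega\bigl(A_1^{(k)}(t_0)v, A_2^{(m)}(t_0)v\bigr).
\end{equation*}
For $j = 1$ the sum is empty, recovering the classical Robbin--Salamon independence.

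For $j \ge 2$ I would proceed by induction on $j$, using the hypothesis that lower-order crossing forms vanish on $\ell(t_0)\cap\ell_*$ at a $j$-th order degenerate crossing. The inductive step is cleanest after reducing to a normal form via part (a): choose coordinates in which $\ell(t_0) = \R^n\times\{0\}$ and $\mathcal W_1 = \{0\}\times\R^n$, so that $\ell(t)$ is the graph of a symmetric $n\times n$ matrix $S(t)$ with $S(t_0) = 0$, and $\mathcal W_2$ is the graph of a symmetric $n\times n$ matrix $T$. A Neumann-series computation starting from $\tilde S(t) := (I - S(t)T)^{-1}S(t)$ then yields the explicit formula
\begin{equation*}
f_2(t) - f_1(t) = \bigl\langle x, \bigl(S(t)TS(t) + O(\|S(t)\|^3)\bigr)\,x\bigr\rangle, \qquad v = (x,0),
\end{equation*}
reducing the problem to a concrete question about the jets of $S(t)$ at $t_0$ for the coordinate representation $x$ of the intersection $\ell(t_0)\cap\ell_*$. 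The main obstacle is this last step: the degeneracy hypothesis only constrains $\langle x, S^{(k)}(t_0)x\rangle = 0$ for $k < j$, which is weaker than constraining the vectors $S^{(k)}(t_0)x$ themselves, so closing the argument requires exploiting the additional structure imposed by $v \in \ell_*$ together with the symmetry of $S(t)$ and $T$ to conclude that the Neumann-series contribution vanishes to the required order.
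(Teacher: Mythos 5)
Your part (a) is correct and is essentially the paper's argument: conjugate the graph operator to $\Psi A(t)\Psi^{-1}$, use that $\Psi$ preserves $\omega$ so that $\omega(\Psi v,\Psi A(t)v)=\omega(v,A(t)v)$ for all $t$ near $t_0$, and differentiate. Nothing further is needed there.

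For part (b) there is a genuine gap, and you have located it precisely. Your identity $f_1(t)-f_2(t)=\omega(A_1(t)v,A_2(t)v)$ and the consequent formula $Q_1^{(j)}(v)-Q_2^{(j)}(v)=\sum_{k+m=j,\ k,m\ge 1}\binom{j}{k}\omega(A_1^{(k)}(t_0)v,A_2^{(m)}(t_0)v)$ are correct and settle the case $j=1$. But for $j\ge 2$ the discrepancy does not obviously vanish: in your normal form, $\dot A_i(t_0)v=(B_i\dot S(t_0)x,\dot S(t_0)x)$ where $\mathcal W_i$ is the graph of the symmetric matrix $B_i$, so for $j=2$ the difference equals $2\langle \dot S(t_0)x,(B_1-B_2)\dot S(t_0)x\rangle$. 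The degeneracy hypothesis supplies only $\langle x,S^{(k)}(t_0)x\rangle=0$ for $k<j$ (and only for $x$ in the coordinate representation of $\ell(t_0)\cap\ell_*$), which does not force the vectors $S^{(k)}(t_0)x$ to vanish when $n\ge 2$; so, exactly as you say, the argument does not close from the stated hypotheses alone. You should know that the paper's own proof of (b) does not resolve this either: it writes $\omega(v,A(t)v)=\langle x,A_{21}(t)x\rangle$ and asserts that $A_{21}(t)$ ``depends only on $\ell(t)$ and not on $\mathcal W$,'' whereas in fact $A_{21}(t)=(I-S(t)B)^{-1}S(t)$, which agrees with $S(t)$ only to first order in $S(t)$; the correction $S(t)BS(t)+O(\|S(t)\|^3)$ is exactly the Neumann-series term you could not kill. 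In the paper's actual application the issue is sidestepped because the needed vector-level degeneracy is verified directly from the ODE: in the proof of Lemma \ref{lem:simple-nonregular} one shows $(d_1)_s(s_*)=(d_2)_s(s_*)=0$ (and analogous facts at second order), which is the stronger information your argument — and any correct proof of part (b) for $j\ge 2$ — requires. So your proposal is incomplete, but your diagnosis of where the difficulty lies is more careful than the proof in the paper.
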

\begin{proof} ~\\
\begin{enumerate}[a)] 
\item Suppose that $\ell(t)$ has frame given by $L(t)$ and fix $v \in \ell(t_0)$. Let $\mathcal W$ and $A(t)$ be as in Definition \ref{def: higher order quadratic forms}. Additionally, let $\Psi$ be a symplectic matrix and define the following quantities 
$$ v_\Psi = \Psi v, \qquad \mathcal W_\Psi = \text{colspan} \big( \Psi W\big), \qquad \ell_\Psi(t) = \text{colspan} \big(\Psi \ell(t)\big).$$

Via \eqref{eq: graph A satisfies equation}, we have that 
$$v + A(t)v = L(t)c(t).$$
By left multiplying by $\Psi$ and recalling that $v = \Psi^{-1}v_\Psi$, we can write 
$$ \Psi v + \Psi A(t)v = v_\Psi + \Psi A(t)\Psi^{-1}v_\Psi = \Psi L(t)c(t).$$
Thus, the graph of the matrix $A_\Psi(t):= \Psi A(t)\Psi^{-1}$ is $\ell_\Psi(t)$.

Define $Q_\Psi(v): = \omega(v_\Psi, A_\Psi(t) v_\Psi )$ and observe the following inner product computation:
  \begin{align*} Q_\Psi(v) & := \omega(v_\Psi, A_\Psi(t) v_\Psi )  \\
  & = \langle v_\Psi, JA_\Psi(t)v_\Psi\rangle \\
  & = \langle \Psi v, JA_\Psi(t) \Psi v\rangle \\
  & = \langle v, \Psi^T JA_\Psi(t)\Psi v \rangle \\
  & = \langle v, J\Psi^{-1}A_\Psi(t) \Psi v\rangle \\
  & = \langle v, JA(t) v\rangle,
  \end{align*}
where we have used that $\Psi$ is a symplectic matrix, so $\Psi^{-1} = J^{-1}\Psi^T J$. Thus, 
$$Q_\Psi(v) = \langle v, J\Psi^{-1}A_\Psi \Psi v\rangle = \langle v, JA(t)v\rangle: = Q(v).$$
Since this is true for $t$ near $t_0$, the derivatives of $Q_\Psi(v)$ and $Q(v)$ coincide and we see that the crossing forms are invariant under symplectic changes of coordinates. 
\item 
  By the previous claim and \cite{mcduffsalamon17}[Lemma 2.3.2], we can change coordinates and assume that $\ell(t_0) = \text{col span} \begin{pmatrix} I_n & 0 \end{pmatrix}^T$ and $\mathcal W = \text{col span} \begin{pmatrix} B & I_n \end{pmatrix}^T$ for some symmetric matrix $B$. Any $\mathcal W$ having trivial intersection with $\ell(t_0)$ will have this structure and is determined by the matrix $B$.  Suppose that the graph of $A: \ell(t_0) \to \mathcal W$ is $\ell(t)$ for $t$ near $t_0$. Write $A$ with the block structure
  $$A(t) = \begin{pmatrix} A_{11}(t) & A_{12}(t) \\ A_{21}(t) &  A_{22}(t)
  \end{pmatrix},$$
  with the block matrices $A_{ij}(t) \in \R^{n \times n}$. Since $A(t)$ is required to map into $\mathcal W$ we see that for some $y(t) \in \R^n$, we have $$ A(t) \begin{pmatrix} x \\ 0
  \end{pmatrix} = \begin{pmatrix} A_{11}(t) x \\ A_{21}(t)x
  \end{pmatrix} = \begin{pmatrix} By(t) \\ y(t)
  \end{pmatrix}.$$
  This equality holds if $y(t) = A_{21}(t)x$ and $A_{11}(t) = BA_{21}(t)$. Thus, the matrix $A(t)$ depends on the choice of $\mathcal W$. However, by computing the quadratic form, we see that 
  \begin{align*} \omega(v, A(t)v) & = \left\langle \begin{pmatrix} x \\ 0
  \end{pmatrix}, \begin{pmatrix} 0 & I_n \\ - I_n & 0
  \end{pmatrix} \begin{pmatrix}
  By(t) \\ y(t)
  \end{pmatrix} \right\rangle\\
  & = \langle x, y(t)\rangle \\
  & = \langle x, A_{21}(t)x\rangle.
  \end{align*}
  Since $A_{21}(t)$ depends only on $\ell(t)$ and not on $\mathcal W$, we see the quadratic form is independent of the choice of $\mathcal W$. Because this is true for all $t$ near $t_0$, it follows that the derivatives of $\omega(v, A(t)v)$ do not depend on $\mathcal W$ either. 
\end{enumerate}
\end{proof}

The goal is to now connect the sign of the higher order crossing forms to the behavior of the eigenvalues of $JA(t)$ for $t$ near $t_0$. In the case of regular crossings, we can quantify how many positive eigenvalues the matrix $JA(t)$ gains or loses as $t$ increases through $t_0$ via taking a first derivative and forming the first order crossing form. In the case of non-regular crossings, we can relate the change in the number of positive eigenvalues of $JA(t)$ on a small interval around $t_0$ to the sign of higher order crossing forms, which are formed by higher order derivatives. We first extend Lemma \ref{lemma: furutani Q1} to a more general setting. In order to do so, we will need the following two technical results. 

\begin{lemma}\label{claim: proj speed} Given a collection of $p$ orthonormal vectors $\{u_i(t)\}_{i=1}^p\subset \R^n$ that are $C^1$ on $(-\epsilon, \epsilon)$, define $C(t) = [u_1(t), \dots, u_p(t)] \in \R^{n \times p}$ and the projection matrix onto the subspace spanned by $\{u_i(t)\}_{i=1}^p$ as $P_t = C(t)C(t)^T. $
Suppose that $y \in \text{span}\{u_i(t_0)\}_{i=1}^p$ so that $P_0 y = y$ and fix $m \geq 1$. Then, the function $F(t^m) := P_{t^m}y - y$ satisfies 
 $$ \lim_{t \to 0} \frac{|F(t^m)|}{|t^m|} = \lim_{t \to 0}\frac{|P_{t^m}y - y |}{|t^m|} <  C,$$
where $|.|$ denotes the Euclidean norm on $\R^n$ and $C$ is a finite constant independent of $m$. 
\end{lemma}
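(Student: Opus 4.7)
The plan is to exploit the $C^1$ regularity of the projection $P_t$ near $t=0$, together with the fact that $y$ is fixed by $P_0$, to bound $F(s) := P_s y - y$ linearly in $s$, and then specialize to $s = t^m$.

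First I would observe that since each $u_i(t)$ is $C^1$ on $(-\epsilon,\epsilon)$, the matrix $C(t) = [u_1(t),\dots,u_p(t)]$ is $C^1$, hence so is the projection $P_t = C(t)C(t)^T$ viewed as a map $(-\epsilon,\epsilon) \to \R^{n\times n}$. In particular, there exists $M < \infty$ (depending only on the $u_i$ and a small neighborhood of $0$) such that $\|\dot P_s\|_{\mathrm{op}} \leq M$ for all $|s|$ small.

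Next, using that $y \in \mathrm{span}\{u_i(0)\}_{i=1}^p$ we have $P_0 y = y$. Therefore
\begin{equation*}
F(s) \;=\; P_s y - y \;=\; P_s y - P_0 y \;=\; (P_s - P_0)\, y,
\end{equation*}
and the fundamental theorem of calculus gives
\begin{equation*}
P_s - P_0 \;=\; \int_0^s \dot P_\tau \, d\tau,
\end{equation*}
which yields the norm bound $\|P_s - P_0\|_{\mathrm{op}} \leq M |s|$ for $|s|$ small. Consequently $|F(s)| \leq M|y|\cdot|s|$, and in particular $|F(s)|/|s| \leq M|y|$ for $0 < |s|$ sufficiently small.

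Finally, setting $s = t^m$ with $m \geq 1$, one has $t^m \to 0$ as $t \to 0$, so $s$ lies in the range where the bound applies. Taking limits,
\begin{equation*}
\lim_{t\to 0}\frac{|F(t^m)|}{|t^m|} \;\leq\; M|y|,
\end{equation*}
and the constant $C := M|y| + 1$ (say) depends only on $y$ and the $C^1$ norm of $P_t$ near $0$, hence is independent of $m$. There is essentially no technical obstacle here: the key insight is that the linearity of the mean-value estimate in the increment makes the quotient $|F(s)|/|s|$ uniformly bounded in $s$, so the exponent $m$ in $s = t^m$ plays no role beyond ensuring $s \to 0$. One could equivalently phrase this via the Taylor expansion $P_s y = y + \dot P_0 y \cdot s + o(s)$, in which case the limit is exactly $|\dot P_0 y|$.
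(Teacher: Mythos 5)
Your proposal is correct and follows essentially the same route as the paper: both arguments use the $C^1$ regularity of $P$ together with the fundamental theorem of calculus to bound $|F(s)|$ linearly by $|s|$ times a sup of the derivative, and then substitute $s = t^m$, so that $m$ plays no role beyond sending $s \to 0$. The only cosmetic difference is that you bound $\|P_s - P_0\|$ directly as a function of $s$ and substitute at the end, whereas the paper composes with $t \mapsto t^m$ first and changes variables inside the integral; the resulting constant (a bound on $|F'|$ near $0$) is the same.
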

\begin{proof} First notice $F \in C^1\big((-\epsilon, \epsilon), \R^n\big)$ by definition. For $t \in (-\epsilon, \epsilon)$, we can differentiate $F$ to obtain
\begin{align*}|F(t^m)| &  = \left| \int_0^t F'(\tau^m) m \tau^{m-1} \ d\tau \right| \\
& \leq \sup_{0 < \tau \leq t} |F'(\tau^m)| \left|\int_0^t m\tau^{m-1} \ d\tau \right| \\
& = \sup_{0 < \tau \leq t} |F'(\tau^m)| |t^m|.
\end{align*}
Thus, 
\begin{align*}\lim_{t \to 0} \frac{|F(t^m)|}{|t^m|}  & \leq \lim_{t \to 0}\sup_{0 < \tau \leq t} \frac{ |F'(\tau^m)| |t^m|}{|t^m|} = |F'(0)| < \infty;
\end{align*}
since $F$ is continuously differentiable. 
\end{proof}

Lemma \ref{claim: proj speed} is necessary for the proof of the following technical lemma. 
\begin{lemma}\label{lemma: general crossing intermediate lemma}
\ADD{Suppose $\{B(t)\}$ is a $C^m(\R, \R^{2n \times 2n})$ family of $2n \times 2n$ self-adjoint matrices and fix $t_0$. For $u \in \ker B(t_0)$ define the quadratic form 
$$R^{(i)}(u): = \frac{d^i}{dt^i} \left\langle u, B(t)u \right\rangle \bigg|_{t=t_0} = \left\langle u,  B^{(i)}(t_0)u \right\rangle.$$
For fixed $j \leq m$, assume the following: 
\begin{enumerate}[i)]
\item The quadratic forms $R^{(j)}(u)$ for $u \in \ker B(t_0)$ are degenerate for $i = 1, \dots, j-1$.
\item The $j$th order quadratic form $R^{(j)}$ on $\ker B(t_0)$ is non-degenerate.
\end{enumerate}
Furthermore, for $\epsilon > 0$, let $P_t$ be the spectral projection onto the eigenspace of $B(t)$, restricted to eigenvalues within $\epsilon$ of $0$. In other words, $P_t$ projects onto $G_t$ with $G_t$ defined as
 $$G_t =\big\{\ker (cI_n - B(t)) \ : \ \forall c \in \R, |c| < \epsilon \big\}.$$ 
Then, on the kernel of $B(t_0)$, 
$$ \lim_{t \to t_0}j!\left\langle P_{(t - t_0)^j}u, \frac{1}{(t - t_0)^j}B(t - t_0) P_{(t - t_0)^j}u \right\rangle = \langle u, B^{(j)}(t_0)u \rangle.$$}
\end{lemma}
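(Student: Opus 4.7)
The plan is to translate so that $t_0=0$ (the general case follows by considering $\widetilde{B}(s):=B(t_0+s)$) and set $s=t-t_0$, so the identity reduces to
\begin{equation*}
\lim_{s\to 0}\frac{j!}{s^j}\,\bigl\langle P_{s^j}u,\,B(s)\,P_{s^j}u\bigr\rangle=\langle u,\,B^{(j)}(0)u\rangle
\end{equation*}
for $u\in\ker B(0)$. The approach rests on two ingredients: a Taylor expansion of $B(s)$ about $s=0$, which by hypothesis (i) -- interpreted in the fully-degenerate sense that $R^{(i)}$ restricted to $\ker B(0)$ vanishes identically for $i<j$ -- produces vanishing diagonal matrix elements $\langle u,B^{(i)}(0)u\rangle=0$ for $0\le i<j$; and a quantitative control $|P_{s^j}u-u|=O(s^j)$ obtained from Lemma~\ref{claim: proj speed}.

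For the projection estimate, I would first observe that since $0$ is an isolated eigenvalue of the finite matrix $B(0)$, the Riesz contour representation $P_r=-\tfrac{1}{2\pi i}\oint_{|z|=\epsilon}(zI-B(r))^{-1}\,dz$ exhibits $r\mapsto P_r$ as a $C^m$ family of projections on a neighborhood of $r=0$, once $\epsilon$ is smaller than half the distance from $0$ to the rest of $\sigma(B(0))$. Choosing a $C^m$ orthonormal frame for $\mathrm{Range}(P_r)$ and applying Lemma~\ref{claim: proj speed} with $y=u$ and exponent $j$ (using that $P_0u=u$), I would conclude $|P_{s^j}u-u|=O(s^j)$.

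Next I would expand
\begin{equation*}
\bigl\langle P_{s^j}u,B(s)P_{s^j}u\bigr\rangle=\langle u,B(s)u\rangle+2\langle u,B(s)(P_{s^j}u-u)\rangle+\langle P_{s^j}u-u,B(s)(P_{s^j}u-u)\rangle.
\end{equation*}
Taylor's theorem and hypothesis (i) collapse the diagonal term to $\frac{s^j}{j!}\langle u,B^{(j)}(0)u\rangle+o(s^j)$. The quadratic remainder is controlled by $\|B(s)\|\,|P_{s^j}u-u|^2=O(s^{2j})$. For the cross term, self-adjointness of $B(s)$ together with the key identity $B(0)u=0$ lets me rewrite it as $\langle (B(s)-B(0))u,\,P_{s^j}u-u\rangle=O(s)\cdot O(s^j)=O(s^{j+1})$. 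Summing the three estimates and dividing by $s^j/j!$ yields the claimed limit.

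The delicate step will be the cross term: a direct Cauchy--Schwarz bound alone only gives $O(s^j)$, the same order as the quantity being isolated, which would destroy the identification of the limit. The essential trick is to transfer one factor of $B(s)$ onto the other side of the inner product and subtract $B(0)u=0$, gaining an extra factor of $s$ from the Lipschitz behavior of $r\mapsto B(r)$. Note that hypothesis (ii) is not actually used in proving the lemma itself; it becomes relevant only in the subsequent extension of Lemma~\ref{lemma: furutani Q1}, where the limit above will be interpreted as a nonzero signed spectral flow coefficient.
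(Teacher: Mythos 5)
Your proof is correct, and it reaches the paper's conclusion from the same three ingredients the paper uses --- Taylor's theorem, the full-degeneracy hypothesis $\langle u, B^{(i)}(t_0)u\rangle = 0$ for $i = 1,\dots,j-1$ together with $B(t_0)u = 0$, and the rate $|P_{s^j}u - u| = O(s^j)$ supplied by Lemma \ref{claim: proj speed} --- but it organizes them through a genuinely different decomposition. The paper keeps $P_{s^j}u$ in both slots of the inner product and adds and subtracts the Taylor polynomial of $B$ sandwiched between projections, producing four groups of terms ($I$--$IV$); the negative powers $s^{i-j}$ appearing in its term $IV$ are then killed one at a time using $B(0)u=0$ and the degeneracy of the lower-order forms, each time paying for the singular factor with the $O(s^j)$ projection estimate. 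You instead expand $\langle P_{s^j}u, B(s)P_{s^j}u\rangle$ binomially about $u$, so that Taylor's theorem and the degeneracy hypothesis are applied only to the clean diagonal term $\langle u, B(s)u\rangle$, and the projection error enters only through the cross and quadratic remainders. Your treatment of the cross term --- moving $B(s)$ across the inner product by self-adjointness and subtracting $B(0)u = 0$ to upgrade the naive $O(s^j)$ bound to $O(s^{j+1})$ --- is exactly the maneuver the paper performs in its term $IV(a)$, just deployed at a different spot; you are right that without it the cross term would be of the same order as the quantity being extracted. The net effect is a shorter argument with fewer singular terms to track, while the paper's version makes the role of each individual lower-order derivative $B^{(i)}(0)$ more explicit. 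Your closing observations are also accurate: hypothesis (ii) is not used in proving the limit itself (it enters only in Lemma \ref{lemma: deng/jones general ops}), and the smoothness of $r \mapsto P_r$, which the paper obtains by assuming a $C^1$ frame in Lemma \ref{claim: proj speed} and citing Kato, is cleanly justified by your Riesz-contour representation of the spectral projection.
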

\begin{proof} Without loss of generality, set $t_0 = 0$. We can add and subtract terms to compute 
\begin{align*} \left\langle P_{t^j}u, \frac{1}{t^j}B(t) P_{t^j}u \right\rangle &  - \frac{1}{j!} \langle u, B^{(j)}(0)u \rangle \\
&  =  \left\langle P_{t^j}u, \frac{1}{t^j}B(t) P_{t^j}u \right\rangle  - \frac{1}{j!} \langle u, B^{(j)}(0)u \rangle \pm \left\langle P_{t^j}u, \left[ \sum_{i = 1}^{j-1} \frac{t^{i-j}}{i!} B^{(i)}(0) \right] P_{t^j}u \right\rangle  \\
& \qquad \pm \left\langle u, \frac{1}{j!}B^{(j)}(0) P_{t^j}u \right\rangle \pm \left\langle P_{t^j}u, \frac{1}{t^j}B(0)P_{t^j}u \right\rangle  \pm \left\langle P_{t^j}u, \frac{1}{j!}B^{(j)}(0) P_{t^j}u \right\rangle \\
&  =  \underbrace{\left\langle P_{t^j}u, \left[\frac{1}{t^j} \left(B(t) - B(0) - \sum_{i=1}^{j-1} \frac{t^i}{i!} B^{(i)}(0) \right) - \frac{1}{j!} B^{(j)}(0) \right]P_{t^j}u \right\rangle}_{I}\\
& \qquad + \underbrace{\frac{1}{j!} \left\langle u, B^{(j)}(0)\left[P_{t^j}u - u \right]\right\rangle}_{II} + \underbrace{\frac{1}{j!}\left\langle P_{t^j}u - u, B^{(j)}(0)P_{t^j}u \right\rangle}_{III} \\
& \qquad + \underbrace{\left\langle P_{t^j}u, \left[\sum_{i=0}^{j-1}\frac{t^{i-j}}{i!} B^{(i)}(0) \right] P_{t^j}u \right\rangle}_{IV}.
\end{align*}

We will argue that each of these terms vanishes as $t \to 0$ to obtain the desired conclusion. We can use Taylor's remainder theorem to see that term $I$ vanishes,
\begin{align*} \lim_{t \to 0} \left\langle P_{t^j}u , \left[\frac{1}{t^j} \left(B(t) - B(0) - \sum_{i=1}^{j-1} \frac{t^i}{i!} B^{(i)}(0) \right) \right.\right. & \left.\left. - \frac{1}{j!} B^{(j)}(0) \right]P_{t^j}u \right\rangle  \\
&  = \left \langle u, \frac{1}{j!} \big(B^{(j)}(0) - B^{(j)}(0)\big)u \right\rangle = 0.
\end{align*}
Now looking at terms $II$ and $III$, we can use $u \in \ker B(0)$ and $\lim_{t \to 0} P_{t^j}u = u$ to write
\begin{align*} \lim_{t \to 0} \left\{\frac{1}{j!} \left\langle u, B^{(j)}(0)\left[P_{t^j}u - u \right]\right\rangle + \frac{1}{j!}\left\langle P_{t^j}u - u, B^{(j)}(0)P_{t^j}u \right\rangle \right\} & = \frac{1}{j!}\bigg\{ \left\langle u,0 \right\rangle + \left\langle0, B^{(j)}u \right\rangle \bigg\} = 0.
\end{align*}
Finally, we show term $IV$ vanishes in the limit. We must treat the $i = 0$ and $i \neq 0$ terms in the sum separately. 
\begin{align*} \underbrace{\left\langle P_{t^j}u, \left[\sum_{i=0}^{j-1}\frac{t^{i-j}}{i!} B^{(i)}(0) \right] P_{t^j}u \right\rangle}_{IV} & = \underbrace{\left\langle P_{t^j}u, \frac{1}{t^j}B(0)P_{t^j}u \right\rangle}_{IV(a)} + \underbrace{\left\langle P_{t^j}u, \left[\sum_{i=1}^{j-1}\frac{t^{i-j}}{i!} B^{(i)}(0) \right] P_{t^j}u \right\rangle}_{IV(b)}.
\end{align*}
First addressing $IV(a)$, we can use that $B(0)u = 0$, Lemma \ref{claim: proj speed} and the Cauchy-Schwarz inequality to write 
\begin{align*}\lim_{t \to 0} \left|\left\langle P_{t^j}u, \frac{1}{t^j}B(0)P_{t^j}u \right\rangle \right| & = \lim_{t \to 0} \left|\left\langle P_{t^j}u, \frac{1}{t^j}B(0)P_{t^j}u \right\rangle - \left\langle P_{t^j}u, \frac{1}{t^j} B(0) u \right\rangle\right| \\
& = \lim_{t \to 0} \left|\left\langle  B(0)P_{t^j}u, \frac{1}{t^j}  \big[P_{t^j}u - u\big] \right\rangle \right| \\
& \leq \lim_{t \to 0} \left\|  B(0)P_{t^j}u \right\| \left\|\frac{1}{t^j}  \big[P_{t^j}u - u\big] \right\| \\
& = \left\| B(0)u\right\| C, \qquad \text{($C$ constant)} \\
& = 0.
\end{align*}
For term $IV(b)$ we can use  $\langle u, B^{(i)}(0)u \rangle = 0$ for $i = 1, \dots, j-1$ to write 
\begin{align*}\left\langle P_{t^j}u, \frac{1}{t^{j-i}}B^{(i)}(0)P_{t^j}u  \right\rangle & = \left\langle P_{t^j}u, \frac{1}{t^{j-i}}B^{(i)}(0)P_{t^j}u  \right\rangle \pm \left\langle P_{t^j}u, \frac{1}{t^{j-i}}B^{(i)}(0)u  \right\rangle \\
& \qquad - \left\langle u, \frac{1}{t^{j-i}}B^{(i)}(0)u\right\rangle\\
& = \left\langle P_{t^j}u, \frac{1}{t^{j-i}}B^{(i)}(0)\left[P_{t^j}u-u\right] \right\rangle + \left\langle \frac{1}{t^{j-i}}\left[P_{t^j}u - u\right], B^{(i)}(0)u  \right\rangle.
\end{align*}
Via Lemma \ref{claim: proj speed}, we know $P_{t^j}u - u = \mathcal O(t^j)$, meaning the numerator in each term is approaching $0$ faster than the denominator. Thus, 
\begin{align*} \lim_{t \to 0} \left\langle P_{t^j}u, \frac{1}{t^{j-i}}B^{(i)}(0)P_{t^j}u  \right\rangle  & = \lim_{t \to 0} \left\{ \left\langle P_{t^j}u, \frac{1}{t^{j-i}}B^{(i)}(0)\left[P_{t^j}u-u\right] \right\rangle + \left\langle \frac{1}{t^{j-i}}\left[P_{t^j}u - u\right], B^{(i)}(0)u  \right\rangle\right\} \\
& = \left\langle u, 0 \right\rangle + \left\langle 0, B^{(i)}(0)u \right\rangle \\
& = 0.
\end{align*}
Thus, as $t \to 0$, we have that $\left\langle  P_{t^j}u, \frac{1}{t^j}B(t) P_{t^j}u \right\rangle \to \frac{1}{j!}\left\langle u, B^{(j)}(0) u \right\rangle$. Multiplying by $j!$ gives the desired result.
\end{proof}

With these technical lemmas, we prove the following spectral result that generalizes Lemma \ref{lemma: furutani Q1}. Heuristically, Lemma \ref{lemma: deng/jones general ops} says that the parity of the order of the lowest non-degenerate higher order crossing form tells you if and how the number of positive eigenvalues of $B(t)$ changes as $t$ increases through $t_0$. 
\begin{lemma}\label{lemma: deng/jones general ops}  
Suppose we are in the setting of Lemma \ref{lemma: general crossing intermediate lemma}. Recall that $\dim \ker B(t_0) = k$ and $j$ is the lowest order non-degenerate quadratic form on the kernel of $B(t_0)$. Then, there exists $p, q \in \N \cup \{0\}$ with $p + q = k$ and $\epsilon, \delta > 0$ such that 
\begin{enumerate}[i)]
\item for $t_0 < t \leq t_0 + \delta$ the operator $B(t)$ has $k$ eigenvalues within $\epsilon$ of $0$ with $p$ of them positive and $q$ of them negative; 
\item $\text{sign} R^{(j)} = p - q$;
\item if $j$ is odd and $t_0-\delta \leq t < t_0$, then the operator $B(t)$ has $q$ positive eigenvalues and $p$ negative eigenvalues within $\epsilon$ of $0$;
\item if $j$ is even and $t_0-\delta \leq t < t_0$, then the operator $B(t)$ has $p$ positive eigenvalues and $q$ negative eigenvalues within $\epsilon$ of $0$.
\end{enumerate}
\end{lemma}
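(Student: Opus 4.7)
The plan is to reduce the lemma to a finite-dimensional eigenvalue-perturbation problem on $\ker B(t_0)$ by invoking Lemma~\ref{lemma: general crossing intermediate lemma}, and then to read off the four cases by tracking the sign of $t^j$ for $t<t_0$.

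Without loss of generality, take $t_0=0$. Since $B(t)$ is a $C^m$ family of self-adjoint matrices and $0$ is an isolated eigenvalue of $B(0)$ of multiplicity $k$, standard perturbation theory supplies $\epsilon,\delta>0$ such that for every $|t|<\delta$ the matrix $B(t)$ has exactly $k$ eigenvalues (counted with multiplicity) in $(-\epsilon,\epsilon)$, and the associated spectral projector $P_t$ is $C^m$ in $t$, with $P_0$ equal to the orthogonal projector onto $\ker B(0)$. Because $B(t)$ is self-adjoint, $\mathrm{range}(P_t)$ is invariant under $B(t)$, and the $k$ small eigenvalues of $B(t)$ are precisely the eigenvalues of the $k \times k$ restriction $B(t)|_{\mathrm{range}(P_t)}$.

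Next I would pass to a quadratic form on the fixed space $\ker B(0)$. Define a symmetric bilinear form $M(t)$ on $\ker B(0)$ by polarizing $u \mapsto \tfrac{j!}{t^j}\langle P_{t^j} u, B(t) P_{t^j} u\rangle$. Lemma~\ref{lemma: general crossing intermediate lemma} says exactly that $\langle u, M(t) u\rangle \to R^{(j)}(u)$ as $t\to 0$ for every $u \in \ker B(0)$. In finite dimensions pointwise convergence of a quadratic form forces convergence of the representing symmetric matrix, and by the non-degeneracy hypothesis $\mathrm{sign}\, R^{(j)} = p-q$ with $p+q=k$, continuity of the spectrum of self-adjoint matrices provides $\delta' \in (0,\delta)$ such that $M(t)$ has exactly $p$ positive and $q$ negative eigenvalues for every $0<|t|<\delta'$. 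This already yields (ii). To convert the inertia of $M(t)$ into that of $B(t)|_{\mathrm{range}(P_t)}$, observe that $u \mapsto P_{t^j}u$ is a near-identity isomorphism from $\ker B(0)$ onto $\mathrm{range}(P_{t^j})$, so by construction $M(t)$ is congruent to $t^{-j}$ times the restriction of $B(t)$ to $\mathrm{range}(P_{t^j})$; Lemma~\ref{claim: proj speed} together with smoothness of $P_\cdot$ controls the error between restricting to $\mathrm{range}(P_{t^j})$ and to $\mathrm{range}(P_t)$, and Sylvester's law of inertia then shows that $B(t)|_{\mathrm{range}(P_t)}$ has the same inertia as $t^j M(t)$. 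It remains to read off the cases: for $0 < t < \delta'$ we have $t^j>0$, so the inertia of $M(t)$ is preserved, giving $p$ positive and $q$ negative small eigenvalues of $B(t)$, establishing (i); for $-\delta'<t<0$ with $j$ odd, $t^j<0$ and multiplication flips signs, yielding (iii); for $-\delta'<t<0$ with $j$ even, $t^j>0$ and signs are preserved, yielding (iv).

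The main obstacle is the spectral transfer in the middle paragraph: Lemma~\ref{lemma: general crossing intermediate lemma} is a pointwise statement about a single quadratic form, whereas (i), (iii), (iv) require simultaneous control of all $k$ small eigenvalues. In finite dimensions this is a routine consequence of entry-wise matrix convergence plus continuity of eigenvalues, but one must verify that the projections $P_{t^j}$ do not introduce inertia-changing distortion in the limit, which is precisely what Lemma~\ref{claim: proj speed} and its use in Lemma~\ref{lemma: general crossing intermediate lemma} are designed to rule out. Once this congruence is nailed down, the dichotomy between (iii) and (iv) is purely arithmetical: the parity of $j$ controls whether $t^j$ changes sign as $t$ crosses $0$, and this single observation produces the two subcases.
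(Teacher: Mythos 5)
Your proposal is correct and follows essentially the same route as the paper: both pass to the spectral projection $P_{t^j}$, use Lemma \ref{lemma: general crossing intermediate lemma} to identify the limiting signature of $\frac{j!}{t^j}P_{t^j}B(t)P_{t^j}$ with that of $R^{(j)}$, identify this with the inertia of the $k$ small eigenvalues of $B(t)$, and then read off the four cases from the sign of $t^j$. The one step you flag as the main obstacle --- that replacing $P_t$ by $P_{t^j}$ does not change the inertia of the compressed operator --- is treated at the same level of detail in the paper, which simply asserts it from smoothness of $t \mapsto P_t$, so your congruence/Sylvester packaging is a cosmetic rather than substantive difference.
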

\begin{proof} Without loss of generality, we set $t_0 = 0$. \\\\
\noindent\textit{Proof of i):} Since $B(t)$ is continuously differentiable and self-adjoint, the eigenvalues $\{\lambda_i(t)\}_{i=1}^n$ and eigenvectors $\{u_i(t)\}_{i=1}^n$ perturb smoothly in $t$ \cite{kato}. Additionally, $\dim\ker B(0) = k$, so there is an $\epsilon$ and a $\delta$ such that if $P_t$ is the spectral projection onto  
$$G_t :=\big\{\ker (cI_n - B(t)) \ : \ c \in \R, |c| < \epsilon \big\}$$ 
and $0 < |t| < \delta$ then $P_t$ has constant rank equal to $\dim \ker B(0) = k$. Note that this means for $t \in (- \delta,  \delta)$, $B(t)$ has $k$ eigenvalues, $\lambda_i(t)$ such that $0 < | \lambda_i(t) | < \epsilon$. Without loss of generality, we can suppose $p$ of them are positive and $q$ of them are negative for $t \in (0, \delta]$. \\\\

\noindent\textit{Proof of ii):} Note the spectrum of $B(t)$ restricted to an $\epsilon$ neighborhood of $0$ for $t$ sufficiently small is given by the spectrum of $P_tB(t)P_t$ and fix $t > 0$ sufficiently small. We can compute,
\begin{align*} p-q & = \text{sign} P_t B(t)P_t & \text{$\big($via previous work in part $i)\big)$} \\
& = \text{sign} P_{t^j}B(t)P_{t^j} & \text{($P_t$ perturbs smoothly in $t$)} \\
& = \text{sign} \frac{1}{t^j}P_{t^j}B(t)P_{t^j} & \text{($t^j > 0$)} \\
& = \text{sign}R^{(j)} & \text{(Lemma \ref{lemma: deng/jones general ops})}.
\end{align*}

\noindent\textit{Proof of iii):} Suppose $-\delta \leq t < 0$ and $j$ is odd. We can compute 
\begin{align*} p - q & = \text{sign}R^{(j)} & \text{$\big($via previous work in part $ii)\big)$} \\
& = \text{sign} \frac{1}{t^j} P_{t^j}B(t)P_{t^j} & \text{(Lemma \ref{lemma: deng/jones general ops})} \\
& = -\text{sign}P_{t^j}B(t)P_{t^j} & \text{($t^j < 0$)} \\
& = -\text{sign}P_tB(t)P_t & \text{($P_t$ perturbs smoothly in $t$)}.
\end{align*}
Thus, $\text{sign} P_tB(t) P_t = q - p$, implying $B(t)$ has $q$ positive eigenvalues and $p$ negative eigenvalues within $\epsilon$ of $0$ for $t \in [-\delta, 0)$.\\\\


\noindent\textit{Proof of iv):} This follows via a virtually identical argument for that of $iii)$. 
\end{proof}

\begin{remark} This result can be viewed as a finite-dimensional generalization both of \cite{furutani}[Lemma 3.23], which contains the first order result, and of \cite{dengjones11}[Lemma 5.5], which contains the second order result. 
\end{remark}

In order to relate the result of Lemma \ref{lemma: deng/jones general ops} to the Maslov index, we need the following generalization of Corollary \ref{corollary: regular crossing evals}.
\begin{corollary}\label{corollary: general tan evals} Suppose we are in the setting of Lemma \ref{lemma: deng/jones general ops}. Recall that $\dim\ker B(t_0) = k$ and $j$ is the lowest order non-degenerate quadratic form on the kernel of $B(t_0)$. Furthermore, suppose that $\text{sign} R^{(j)} = p-q$. 
\begin{enumerate}[i)]
\item Suppose $j$ is even. Then the signature of the operator $B(t)$ does not change as $t$ increases through $t_0$. 
\item Suppose $j$ is odd. If $p - q > 0$ then $p-q$ is the number of positive eigenvalues the operator $B(t)$ gains as $t$ increases through $t_0$. If $p-q < 0$, then $|p-q|$ is the number of positive eigenvalues lost as $t$ increases through $t_0$. 
\end{enumerate}
\end{corollary}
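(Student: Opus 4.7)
The plan is to derive this corollary as an essentially immediate bookkeeping consequence of Lemma~\ref{lemma: deng/jones general ops}. The signature of a self-adjoint matrix can only change when eigenvalues cross zero, so I would first isolate the eigenvalues of $B(t)$ that are capable of doing so on the small interval $(t_0-\delta, t_0+\delta)$.

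First I would note that, since $\{B(t)\}$ is a $C^m$ family of self-adjoint $2n \times 2n$ matrices, its eigenvalues perturb continuously in $t$ \cite{kato}. Shrinking $\delta$ if necessary, the $2n-k$ eigenvalues of $B(t_0)$ that are bounded away from $0$ remain bounded away from $0$ (and so retain their sign) for all $t \in (t_0 - \delta, t_0 + \delta)$. Hence the only eigenvalues of $B(t)$ that can contribute to a change in signature across $t_0$ are the $k$ eigenvalues of Lemma~\ref{lemma: deng/jones general ops} that lie within $\epsilon$ of $0$. Consequently, the change in $\operatorname{sign} B(t)$ across $t_0$ equals the change in signature of the restriction of $B(t)$ to this small-eigenvalue subspace, which was computed in parts (i), (iii), (iv) of Lemma~\ref{lemma: deng/jones general ops}.

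For part (i) with $j$ even, Lemma~\ref{lemma: deng/jones general ops}(i) and (iv) show that $B(t)$ has $p$ small positive and $q$ small negative eigenvalues both on $[t_0 - \delta, t_0)$ and on $(t_0, t_0+\delta]$. Combined with the unchanged contributions from the eigenvalues away from $0$, the signature is the same on both sides of $t_0$. For part (ii) with $j$ odd, Lemma~\ref{lemma: deng/jones general ops}(i) and (iii) give $q$ positive and $p$ negative small eigenvalues before $t_0$ and $p$ positive and $q$ negative small eigenvalues after $t_0$, so the net gain in the number of positive eigenvalues is exactly $p - q$. Splitting into the cases $p - q > 0$ and $p - q < 0$ yields the two statements in the corollary.

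There is no real obstacle here; the only thing to be careful about is confirming, via smooth dependence of the spectrum on $t$, that no eigenvalue outside the initial $\epsilon$-neighborhood of $0$ can drift through $0$ during the interval $(t_0 - \delta, t_0 + \delta)$ after possibly shrinking $\delta$. Once this is observed, the result reduces to matching the counts from Lemma~\ref{lemma: deng/jones general ops} on each side of $t_0$.
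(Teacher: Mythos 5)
Your proposal is correct and takes essentially the same route as the paper, which simply states that the corollary ``follows directly from Lemma \ref{lemma: deng/jones general ops}''; you have merely spelled out the bookkeeping (eigenvalues bounded away from zero keep their sign after shrinking $\delta$, and the $k$ small eigenvalues are counted on each side of $t_0$ using parts (i), (iii), (iv) of the lemma). No gaps.
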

\begin{proof}
This follows directly from Lemma \ref{lemma: deng/jones general ops}.
\end{proof}

\begin{example}
We can visualize the results in Lemma \ref{lemma: deng/jones general ops} and Corollary \ref{corollary: general tan evals} with Figures \ref{fig: B j even} and \ref{fig: B j odd}. 

Suppose that $B(t)$ is an $n \times n$ self-adjoint matrix and $\dim \ker B(t_0) = 3$. Lemma \ref{lemma: deng/jones general ops} says that there exists $\epsilon, \delta > 0$ such that for $t \in (t_0, t_0 + \delta]$, $B(t)$ has $3$ eigenvalues within $\epsilon$ of $0$. Suppose there is $1$ positive eigenvalue, $\lambda_1(t)$ and $2$ negative eigenvalues, $\lambda_2(t)$ and $\lambda_3(t)$. Note that $B(t)$ has $n-3$ other eigenvalues but they do not play a role in this analysis. In the notation of Lemma \ref{lemma: deng/jones general ops} $p = 1$ and $q = 2$. Suppose for all $u \in \ker B(t_0)$,
$$R^{(1)}(u) = \left\langle u, \frac{d}{dt}B(t)u \right\rangle \bigg|_{t = t_0} = 0 \quad \text{and} \quad R^{(2)}(u) = \left\langle u, \frac{d^2}{dt^2}B(t)u\right\rangle \big|_{t=t_0} \neq 0.$$ 
Then, Corollary \ref{corollary: general tan evals} says that $\lambda_i(t)$ for $i = 1,2,3$ do not change sign as $t$ increases on the interval $[t_0 - \delta, t_0 + \delta]$. This is illustrated in Figure \ref{fig: B j even}.

\begin{figure}[H]
  \centering
\begin{subfigure}[t]{0.3\textwidth}
        \centering
        \begin{tikzpicture}[scale=.75]
          \draw[thick, -] (-3, 0) -- (3, 0); 
          \draw[thick, -] (0, -.05) -- (0, .05) node[below] {$0$};  
          \draw[thick, -] (-3, -.05) -- (-3, .05)  node[below] {$-\epsilon$}; 
          \draw[thick, -] (3, -.05) -- (3, .05) node[below] {$\epsilon$}; 
          \filldraw[red] (-1.5, 0) circle (3pt) node[above left] {$\lambda_2(t)$};
          \filldraw[blue] (-.5, 0) circle (3pt) node[above] {$\lambda_3(t)$};
          \filldraw[orange] (2, 0) circle (3pt) node[above] {$\lambda_1(t)$};
          \draw[red][->] (-1.5, -.2) -- (-1, -.2);
          \draw[blue][->] (-.5, -.2) -- (-.15, -.2);
          \draw[orange][<-] (1.5, -.2) -- (2, -.2);
        \end{tikzpicture}
  \caption{$t \in [t_0 - \delta, t_0)$}
  \end{subfigure}
\hfill
    \begin{subfigure}[t]{0.3\textwidth}
      \centering
      \begin{tikzpicture}[scale=.75]
        \draw[thick, -] (-3, 0) -- (3, 0);
        \draw[thick, -] (0, -.05) -- (0, .05) node[below] {$0$};  
        \draw[thick, -] (-3, -.05) -- (-3, .05)  node[below] {$-\epsilon$}; 
        \draw[thick, -] (3, -.05) -- (3, .05) node[below] {$\epsilon$}; 
        \filldraw[black] (0, 0) circle (3pt) node[above] {$\substack{\color{orange}{\lambda_1(t_0)} \\ \color{red}{\lambda_2(t_0)} \\ \color{blue}{\lambda_3(t_0)}}$};
      \end{tikzpicture}
    \caption{$t = t_0$} 
    \end{subfigure}
    \hfill
    \begin{subfigure}[t]{0.3\textwidth}
    \centering 
    \begin{tikzpicture}[scale=.75]
      \draw[thick, -] (-3, 0) -- (3, 0);
      \draw[thick, -] (0, -.05) -- (0, .05) node[below] {$0$};  
      \draw[thick, -] (-3, -.05) -- (-3, .05)  node[below] {$-\epsilon$}; 
        \draw[thick, -] (3, -.05) -- (3, .05) node[below] {$\epsilon$}; 
        \filldraw[red] (-.5, 0) circle (3pt) node[above] {$\lambda_2(t)$};
        \filldraw[blue] (-2.5, 0) circle (3pt) node[above] {$\lambda_3(t)$};
        \filldraw[orange] (.5, 0) circle (3pt) node[above right] {$\lambda_1(t)$};
        \draw[red][->] (-.15, -.2) -- (-.5, -.2);
        \draw[blue][->] (-2, -.2) -- (-2.5, -.2);
        \draw[orange][->] (.15, -.2) -- (.5, -.2);
    \end{tikzpicture}
    \caption{$t \in (t_0, t_0 + \delta]$} 
    \end{subfigure}
\caption{The motion of the eigenvalues of $B(t)$ within $\epsilon$ of $0$ in the case that $R^{(2)}(u)$ is the lowest order non-vanishing quadratic form.}\label{fig: B j even}
\end{figure}

Now suppose that we are in the same setting on the interval $(t_0, t_0 + \delta]$; meaning that $\dim \ker B(t_0) = 3$ and $B(t)$ has $1$ positive eigenvalue $\lambda_1(t)$ and $2$ negative eigenvalues $\lambda_2(t)$ and $\lambda_3(t)$ within $\epsilon$ of $0$ so that $p = 1$ and $q = 2$. However, now suppose for all $u \in \ker B(t_0)$, 
$$R^{(i)}(u) = \left\langle u, \frac{d^i}{dt^i} B(t)u \right\rangle \bigg|_{t = t_0} = 0, \ i = 1,2 \quad \text{and} \quad  R^{(3)}(u) = \left\langle u, \frac{d^3}{dt^3} B(t)u \right\rangle \big|_{t = t_0} \neq 0.$$  
Then, Corollary \ref{corollary: general tan evals} says that the three eigenvalues $\lambda_1(t), \lambda_2(t)$ and $\lambda_3(t)$ change sign as $t$ increases through $t_0$ and that the number of positive eigenvalues gained by the matrix $B(t)$ as $t$ increases on the interval $[t_0 - \delta, t_0 + \delta]$ is given by $\text{sign}R^{(3)} = p-q = 1-2 = -1$. In other words, $B(t)$ has one fewer positive \ADD{eigenvalues} on the interval $(t_0, t_0 + \delta]$ than on $[t_0 - \delta, t_0)$. This is illustrated in Figure \ref{fig: B j odd}.

\begin{figure}[H]
  \centering
\begin{subfigure}[t]{0.3\textwidth}
        \centering
      \begin{tikzpicture}[scale=.75]
          \draw[thick, -] (-3, 0) -- (3, 0);  
          \draw[thick, -] (0, -.05) -- (0, .05) node[below] {$0$};  
          \draw[thick, -] (-3, -.05) -- (-3, .05)  node[below] {$-\epsilon$}; 
          \draw[thick, -] (3, -.05) -- (3, .05) node[below] {$\epsilon$}; 
          \filldraw[red] (1.5, 0) circle (3pt) node[above] { $ \quad\lambda_2(t)$};
          \filldraw[blue] (.5, 0) circle (3pt) node[above] {$\lambda_3(t)$};
          \filldraw[orange] (-2, 0) circle (3pt) node[above] {$\lambda_1(t)$};
          \draw[red][->] (1.5, -.2) -- (1, -.2);
          \draw[blue][->] (.5, -.2) -- (.15, -.2);
          \draw[orange][->] (-2, -.2) -- (-1.5, -.2);
        \end{tikzpicture}
  \caption{$t \in [t_0 - \delta, t_0)$}
  \end{subfigure}
\hfill
    \begin{subfigure}[t]{0.3\textwidth}
      \centering
      \begin{tikzpicture}[scale=.75]
        \draw[thick, -] (-3, 0) -- (3, 0);
        \draw[thick, -] (0, -.05) -- (0, .05) node[below] {$0$};  
        \draw[thick, -] (-3, -.05) -- (-3, .05)  node[below] {$-\epsilon$}; 
        \draw[thick, -] (3, -.05) -- (3, .05) node[below] {$\epsilon$}; 
        \filldraw[black] (0, 0) circle (3pt) node[above] {$\substack{\color{orange}{\lambda_1(t_0)} \\ \color{red}{\lambda_2(t_0)} \\ \color{blue}{\lambda_3(t_0)}}$};
      \end{tikzpicture}
    \caption{$t = t_0$} 
    \end{subfigure}
    \hfill
    \begin{subfigure}[t]{0.3\textwidth}
    \centering 
    \begin{tikzpicture}[scale=.75]
        \draw[thick, -] (-3, 0) -- (3, 0);
        \draw[thick, -] (0, -.05) -- (0, .05) node[below] {$0$};  
        \draw[thick, -] (-3, -.05) -- (-3, .05)  node[below] {$-\epsilon$}; 
        \draw[thick, -] (3, -.05) -- (3, .05) node[below] {$\epsilon$}; 
        \filldraw[red] (-1, 0) circle (3pt) node[above] {$\qquad\lambda_2(t)$};
        \filldraw[blue] (-1.5, 0) circle (3pt) node[above] {$\lambda_3(t) \qquad$};
        \filldraw[orange] (2.5, 0) circle (3pt) node[above] {$\lambda_1(t)$};
        \draw[red][->] (-.5, -.2) -- (-1, -.2);
        \draw[blue][->] (-1.1, -.2) -- (-1.5, -.2);
        \draw[orange][->] (2, -.2) -- (2.5, -.2);
    \end{tikzpicture}
    \caption{$t \in (t_0, t_0 + \delta]$} 
    \end{subfigure}
\caption{The motion of the eigenvalues of $B(t)$ within $\epsilon$ of $0$ in the case that $R^{(3)}(u)$ is the lowest order non-vanishing quadratic form.}\label{fig: B j odd}
\end{figure}
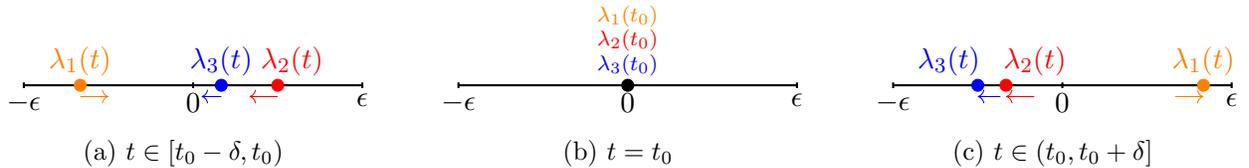
\end{example}

Finally, we state the main result that characterizes the Maslov index of paths via higher order crossing forms.

\begin{theorem}\label{theorem: maslov gen tang} Let $\ell: [a,b] \to \Lambda(n)$ be $C^k$ and fix $j \leq k$. Suppose $t_0 \in (a,b)$ is an isolated crossing with a fixed reference plane $\ell_*$ and $v \in \ell(t_0) \cap \ell_*$. If every crossing form of order $i$ with $i = 1, \dots, j-1$ is degenerate at the crossing
$$Q^{(i)}(v) \equiv 0, \qquad i = 1, \dots, j-1 \text{ and }v \in \ell(t_0) \cap \ell_*$$ 
and 
$Q^{(j)}(v) \big|_{\ell(t_0) \cap \ell_*}$ is non-degenerate, then there exists $\delta > 0$ such that 
\begin{enumerate}[i)]
\item if $j$ is even, $$ \text{Mas}(\ell(t), \ell_*; \ |t-t_0| < \delta) = 0;$$
\item if $j$ is odd,$$ \text{Mas}(\ell(t), \ell_*; \ |t-t_0| < \delta) = \text{sign}Q^{(j)}(v).$$
\end{enumerate}

If $t_0$ occurs at an endpoint, the contribution to the Maslov index from the crossing is given by $\frac{1}{2} \text{sign}Q^{(j)}(v).$
\end{theorem}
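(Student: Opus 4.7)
The plan is to deduce the result from the spectral flow formulation of the Maslov index in Theorem \ref{thm: spectral flow RS}, which crucially does not require non-degeneracy of the first-order crossing form (cf.\ Remark \ref{rmk: spectral flow form of the maslov index}), together with the eigenvalue-motion result of Lemma \ref{lemma: deng/jones general ops}. The key reduction is to analyze the self-adjoint operator
$$B(t) := \Pi J A(t) \Pi,$$
where $A(t)\colon \ell(t_0) \to \mathcal W$ is the operator whose graph is $\ell(t)$ and $\Pi$ is the orthogonal projection onto $\ell(t_0) \cap \ell_*$. Self-adjointness of $B(t)$ comes from Proposition \ref{prop: Structure of A}, smoothness is inherited from $\ell$, and since $A(t_0) = 0$ we have $B(t_0) = 0$, so the kernel of $B(t_0)$, viewed as an operator on the image of $\Pi$, is all of $\ell(t_0) \cap \ell_*$, of dimension $k$.

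Next I would identify the quadratic forms $R^{(i)}$ of Lemma \ref{lemma: general crossing intermediate lemma} with the higher-order crossing forms $Q^{(i)}$. For $v \in \ell(t_0) \cap \ell_*$ one has $\Pi v = v$, so $\langle v, B(t) v\rangle = \langle v, J A(t) v\rangle = \omega(v, A(t) v)$; differentiating $i$ times at $t_0$ gives $R^{(i)}(v) = Q^{(i)}(v)$. The hypotheses of the theorem then translate precisely into those of Lemma \ref{lemma: deng/jones general ops}, with $\text{sign}\, R^{(j)} = \text{sign}\, Q^{(j)} =: p - q$.

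Choosing $\delta > 0$ so that $t_0$ is the only crossing in $[t_0 - \delta, t_0 + \delta]$, Lemma \ref{lemma: deng/jones general ops} describes the motion of the $k$ perturbing eigenvalues of $B(t)$ near $0$: on $(t_0, t_0 + \delta]$ there are $p$ positive and $q$ negative ones, while on $[t_0 - \delta, t_0)$ the counts are $(q,p)$ if $j$ is odd and $(p,q)$ if $j$ is even. Because $B(t)$ has rank at most $k$, these are the only eigenvalues contributing to the signature, giving $\text{sign}\, B(t_0 + \delta) = p - q$ and $\text{sign}\, B(t_0 - \delta) = q - p$ ($j$ odd) or $p - q$ ($j$ even). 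Substituting into the interior formula of Theorem \ref{thm: spectral flow RS},
$$\text{Mas}(\ell, \ell_*; |t - t_0| \leq \delta) = \tfrac{1}{2}\bigl[\text{sign}\, B(t_0 + \delta) - \text{sign}\, B(t_0 - \delta)\bigr],$$
yields $0$ when $j$ is even and $p - q = \text{sign}\, Q^{(j)}$ when $j$ is odd. The endpoint case follows from the corresponding one-sided formula in Theorem \ref{thm: spectral flow RS}, where $B(t_0) = 0$ collapses one of the signature terms and produces the contribution $\tfrac{1}{2} \text{sign}\, Q^{(j)}$.

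I expect the main obstacle to be the bookkeeping identification $R^{(i)}|_{\ker B(t_0)} = Q^{(i)}|_{\ell(t_0) \cap \ell_*}$ and the verification that the lower-order $R^{(i)}$ genuinely vanish on the full kernel of $B(t_0)$ (not merely on the subspace against which $Q^{(i)}$ is tested in Definition \ref{def: higher order quadratic forms}); once this is in place, the result is a direct application of Lemma \ref{lemma: deng/jones general ops} combined with the rank-$k$ structure of $B(t)$, which is precisely what allows us to read the full signature off the perturbing eigenvalues alone.
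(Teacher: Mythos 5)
Your proposal is correct and follows essentially the same route as the paper's proof: reduce to the self-adjoint family $B(t) = \Pi J A(t)\Pi$, apply Lemma \ref{lemma: deng/jones general ops} (via Corollary \ref{corollary: general tan evals}) to track the signed eigenvalue motion, and read off the Maslov index from the spectral flow formula of Theorem \ref{thm: spectral flow RS}, including the one-sided endpoint case. The identification $R^{(i)}|_{\ker B(t_0)} = Q^{(i)}|_{\ell(t_0)\cap\ell_*}$ that you flag as the main bookkeeping obstacle is handled implicitly in the paper at the same level of detail, so there is no gap relative to the published argument.
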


\begin{remark}
It is important to recognize that this framework as stated only applies when the crossing forms of order $1, \dots, j-1$ are degenerate on the entire kernel of $B(t_0)$. This is sufficient for our application to the Swift-Hohenberg equation. 
\end{remark}
\begin{proof}[Proof of Theorem \ref{theorem: maslov gen tang}.] Following the discussion at the beginning of this section, we first write $\ell(t)$ as a graph near the crossing. Assume $\ell(t_0) \cap \ell_* \neq \{0\}$ and $\ell(t_0) \cap \mathcal W = \{0\}$ for some other Lagrangian subspace $\mathcal W$. Let $A(t): \ell(t_0) \to \mathcal W$ be such that $v + A(t)v \in \ell(t)$ and $A(t_0)v = 0$ for all $v \in \ell(t_0)$.

Let $\Pi$ be the projection onto $\ell(t_0) \cap \ell_*$ and define $B(t) = \Pi JA(t)\Pi$. If the dimension of $\ker \Pi JA(t_0)\Pi$ is $k$, then Lemma \ref{lemma: deng/jones general ops} tells us there is a $p, q \in \N \cup \{0\}$ such that $p + q = k$ and an $\epsilon, \delta > 0$ such that there are $p$ positive eigenvalues and $q$ negative eigenvalues of $\Pi JA(t)\Pi$ on the interval $(t_0, t_0 + \delta]$. Corollary \ref{corollary: general tan evals} and Theorem \ref{thm: spectral flow RS} allow us to conclude
\begin{align*} \text{Mas}(\ell, \ell_*; \ |t-t_0| \leq \delta) & = \frac{1}{2} \text{sign}\big[\Pi JA(t_0 + \delta) \Pi\big] -\frac{1}{2} \text{sign} \big[\Pi JA(t_0-\delta)\Pi \big]\\
& = \begin{cases}\frac{1}{2}(p - q) - \frac{1}{2}(q-p), & j \text{ odd} \\
\frac{1}{2}(p - q) - \frac{1}{2}(p-q), & j \text{ even} \\
\end{cases} \\
& = \begin{cases} p-q, & j \text{ odd} \\
0, & j \text{ even} \\
\end{cases} \\
& = \begin{cases} \text{sign} Q^{(j)}(v), & j \text{ odd} \\
0 ,& j \text{ even} \\
\end{cases}.
\end{align*}
\end{proof}
\begin{remark} The contribution to the Maslov index at the endpoints of the interval can be chosen with some flexibility as long as the homotopy property in Theorem \ref{lemma: homotopic to regular} is respected. See \cite{dengjones11} for a different convention.
\end{remark}

\begin{remark}
To our knowledge, using higher order derivatives of $\omega(v, A(t)v)$ to understand how the signs of the eigenvalues of the matrix $JA(t)$ change on an interval around some $t_0$ is first suggested in \cite{GPP04}. Our contribution provides proofs of the arguments for this perspective and makes explicit connections between the higher order crossing forms and the definition of the Maslov index developed in \cite{robbinsalamon}. 
\end{remark}

\subsection{Motion of Eigenvalues}\label{S:eval-motion}
We can relate the $j$th order crossing form to the $j$th derivative of the zero eigenvalue of $JA(t_0)$ with eigenvector $v = L(t_0) u$ with $v \in \ell(t_0) \cap \ell_*$ if $t_0$ is a crossing of order $j$. We first recall a result from \cite{lancaster}. 
\begin{theorem}\cite{lancaster}[Lemma 1]\label{thm: lancaster eigenvalue derivatives} Fix $t_0\in \R$ and suppose that $B(t)$ is a parameter dependent family of symmetric matrices that are diagonalizable and have semi-simple eigenvalues in a neighborhood of $t_0$. Furthermore, let $\lambda(t_0)$ be a simple eigenvalue of $B(t_0)$ with eigenvector $v(t_0)$ normalized so $| v(t_0)| = 1$ and the $i$th derivatives of $B$ vanish at $t_0$ for $i = 1, \dots j-1$, $B^{(i)}(t_0) \equiv 0 $ and $B^{(j)}(t_0)$ is not identically $0$. 

Then, the first $j-1$ derivatives of $\lambda(t_0)$ are zero at $t = t_0$ and 
$$\frac{d^j\lambda}{dt^j}\bigg|_{t = t_0} = \left \langle  v(t_0),  B^{(j)}(t_0) v(t_0)\right \rangle.$$
\end{theorem}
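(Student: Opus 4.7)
My plan is to prove this by strong induction on $j$, using the analyticity of the eigenvalue/eigenvector pair near a simple eigenvalue and a Leibniz-rule expansion of the eigenvalue equation $B(t)v(t) = \lambda(t)v(t)$. Since $\lambda(t_0)$ is simple, standard perturbation theory (e.g., \cite{kato}) guarantees that $\lambda(t)$ and a normalized eigenvector $v(t)$ depend smoothly on $t$ in a neighborhood of $t_0$, so all the derivatives I need to write down actually exist.

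The base case $j = 1$ is the classical Hellmann--Feynman computation: differentiating $B(t)v(t) = \lambda(t)v(t)$ once, taking the inner product with $v(t)$, using self-adjointness of $B(t)$ together with $\langle v(t), v'(t)\rangle = 0$ (which follows from differentiating $\|v(t)\|^2 = 1$), gives $\lambda'(t) = \langle v(t), B'(t)v(t)\rangle$. Evaluating at $t_0$ yields the formula and in particular $\lambda'(t_0) = 0$ whenever $B'(t_0) = 0$.

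For the inductive step, I will apply the Leibniz rule to $B(t)v(t) = \lambda(t)v(t)$ to obtain
\begin{equation*}
\sum_{k=0}^{j}\binom{j}{k} B^{(k)}(t)\, v^{(j-k)}(t) \;=\; \sum_{k=0}^{j}\binom{j}{k}\lambda^{(k)}(t)\, v^{(j-k)}(t),
\end{equation*}
evaluate at $t_0$ where $B^{(k)}(t_0) = 0$ for $k = 1,\dots,j-1$, so that the left-hand sum collapses to $B(t_0)v^{(j)}(t_0) + B^{(j)}(t_0)v(t_0)$, and then pair with $v(t_0)$. Self-adjointness of $B(t_0)$ together with $B(t_0)v(t_0) = \lambda(t_0)v(t_0)$ cancels the $k=0$ term on the right with the $B(t_0)v^{(j)}(t_0)$ contribution on the left, leaving
\begin{equation*}
\langle v(t_0), B^{(j)}(t_0) v(t_0)\rangle \;=\; \sum_{k=1}^{j}\binom{j}{k}\lambda^{(k)}(t_0)\, \langle v(t_0), v^{(j-k)}(t_0)\rangle.
\end{equation*}
The inductive hypothesis $\lambda^{(k)}(t_0) = 0$ for $k = 1,\dots,j-1$ then annihilates every term on the right except $k = j$, for which $\langle v(t_0), v(t_0)\rangle = 1$, giving the desired identity for $\lambda^{(j)}(t_0)$. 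To complete the induction, I note that the same Leibniz argument applied at order $k < j$ yields $\lambda^{(k)}(t_0) = \langle v(t_0), B^{(k)}(t_0) v(t_0)\rangle$, which vanishes because $B^{(k)}(t_0) = 0$ by hypothesis.

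The main conceptual subtlety -- and what I would treat carefully -- is the appearance of the potentially nonzero terms $\langle v(t_0), v^{(j-k)}(t_0)\rangle$ on the right-hand side; unlike $\langle v(t_0), v'(t_0)\rangle = 0$, the higher inner products are not forced to vanish by the normalization of $v(t)$. The observation that saves the calculation is that these terms always appear \emph{multiplied} by $\lambda^{(k)}(t_0)$ for $1 \le k \le j-1$, so the induction takes care of them without any further choice of normalization. The simplicity of $\lambda(t_0)$ is used only to guarantee smooth selection of $v(t)$ and to ensure there are no Jordan-block complications; everything else is bookkeeping with the Leibniz rule.
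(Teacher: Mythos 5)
Your proof is correct. Note, however, that the paper does not actually prove this statement: it is imported verbatim as Lemma~1 of the cited reference \cite{lancaster}, so there is no in-paper argument to compare against. Your derivation --- smooth selection of the eigenpair near a simple eigenvalue, a Leibniz expansion of $B(t)v(t)=\lambda(t)v(t)$, pairing with $v(t_0)$, and using self-adjointness to cancel the $B(t_0)v^{(j)}(t_0)$ term against the $k=0$ term on the right --- is essentially the classical Lancaster/Hellmann--Feynman computation, and your handling of the two genuine subtleties is sound: the potentially nonzero inner products $\langle v(t_0), v^{(j-k)}(t_0)\rangle$ are always multiplied by $\lambda^{(k)}(t_0)$ with $1\le k\le j-1$, which vanish by the nested induction on the order of differentiation, and simplicity of $\lambda(t_0)$ is used only to guarantee a smooth eigenvector selection. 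The one thing worth making explicit is the regularity assumption $B\in C^j$ near $t_0$, which is implicit in the statement but needed for the Leibniz expansion to make sense.
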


We now relate this to our setting. Denote the eigenvalue associated with a crossing at $t= t_0$ as $\lambda(t)$, so that $\lambda(t_0) = 0$, and denote its associated eigenvector as $v(t)$. Furthermore, let $\Pi$ be the projection onto $\ell(t_0) \cap \ell_*$. If we set $B(t) = \Pi JA(t)\Pi$, and $v = v(t_0)$, Theorem \ref{thm: lancaster eigenvalue derivatives} says 
$$ \lambda^{(j)}(t_0)  = \big\langle v, JA^{(j)}(t_0)  v \big\rangle = Q^{(j)}(v) .$$
Suppose that $\lambda^{(j)}(t_0) \neq 0$ and $j$ is odd. If $\lambda(t)$ is increasing through $0$ at $t = t_0$, then $\text{sign}Q^{(j)} > 0$. If $\lambda(t)$ is decreasing through $0$ at $t = t_0$ and $j$ is odd, we have that $\text{sign}Q^{(j)} < 0$. On the other hand, suppose $j$ is even. If $\lambda(t)$ decreases until $\lambda(t_0) = 0$ and then increases again, then $\text{sign}Q^{(j)} > 0$. If $\lambda(t)$ increases with $t$ until $\lambda(t_0) = 0$ and then decreases and $j$ is even, then $\text{sign}Q^{(j)} < 0$. These situations are summarized in Figure \ref{fig: lambda derivative trajectories}. In this way, Theorem \ref{theorem: maslov gen tang} can be thought of as a higher order derivative test (compare this discussion/Figure \ref{fig: lambda derivative trajectories} with those in the beginning of this section). 

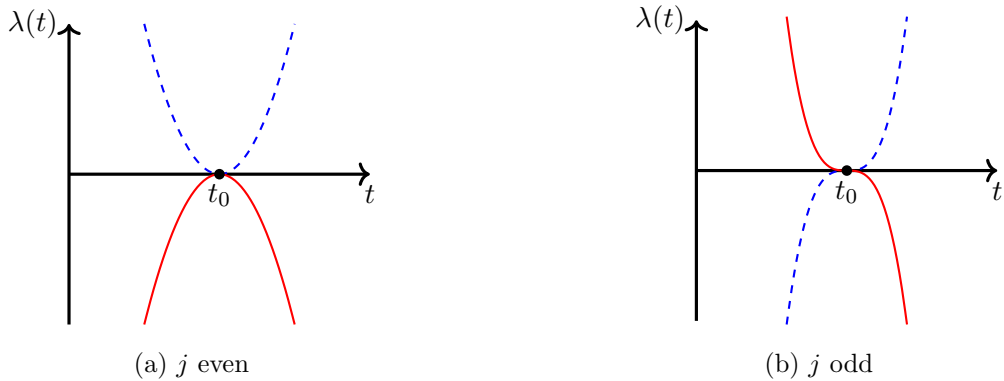
\begin{figure}[H]
    \begin{subfigure}[b]{0.5\textwidth}
        \centering
            \begin{tikzpicture}
                \draw[->, very thick] (-2,0) -- (2,0) node[below] {$t$};
                \draw[->, very thick] (-2,-2) -- (-2,2) node[left]{$\lambda(t)$};
                \draw[scale=0.5, domain=-2:2, smooth, variable=\x, blue, thick, dashed] plot ({\x}, {\x*\x}) node[right]{};
                \draw[scale=0.5, domain=-2:2, smooth, variable=\x, red, thick] plot ({\x}, {-\x*\x}) node[right]{};
                \fill (0,0) circle (2pt) node[below] {$t_0$};
            \end{tikzpicture}
        \caption{$j$ even}
        \label{fig:n even}
    \end{subfigure}
    \begin{subfigure}[b]{0.5\textwidth}
    \centering
            \begin{tikzpicture}
                \draw[->, very thick] (-2,0) -- (2,0) node[below] {$t$};
                \draw[->, very thick] (-2,-2) -- (-2,2) node[left]{$\lambda(t)$};
                \draw[scale=0.5, domain=-1.6:1.6, smooth, variable=\x, blue, thick, dashed] plot ({\x}, {\x*\x*\x}) node[right]{};
                \draw[scale=0.5, domain=-1.6:1.6, smooth, variable=\x, red, thick] plot ({\x}, {-\x*\x*\x}) node[right]{};
                \fill (0,0) circle (2pt) node[below] {$t_0$};
            \end{tikzpicture}
        \caption{$j$ odd}   
        \label{fig:n odd}
    \end{subfigure}
\caption{The graph of $\lambda(t)$ near $t = t_0$, assuming $\lambda^{(i)}(t_0) = 0,\ i = 1, \dots, j-1$ and $\lambda^{(j)}(t_0) \neq 0$. The case that $Q^{(j)}(v) = \lambda^{(j)}(t_0) > 0$ is depicted by the dashed line and the case that $Q^{(j)} = \lambda^{(j)}(t_0)< 0$ is given by the solid line.} 
\label{fig: lambda derivative trajectories}
\end{figure}

\section{Counting unstable eigenvalues via conjugate points}\label{S:count}
We now describe how to use the results of the previous section to prove that the number of unstable eigenvalues for a pulse solution of the Swift-Hohenberg equation is equal to the number of associated conjugate points. Much of this proof will be similar to \cite{BCJ18} with the key difference from that work is that not all of the crossings will be regular. However, now that we are equipped with Theorem \ref{theorem: maslov gen tang}, we will be able to handle this additional difficulty. 

Our strategy will be to consider the eigenvalue problem \eqref{eq: SH eigenvalue problem on R} restricted to the spatial domain $x \in (- \infty, L]$ for some $L > 0$. For $L, \lambda_\infty > 0$ sufficiently large, we will compute the Maslov index relative to $\ell_*^{sand}$ of the path of Lagrangian planes given by the unstable subspace of \eqref{eq: linearized first order}, $\ell(x; \lambda) = \mathbb E^u(x; \lambda)$ for $(x; \lambda)$ on the following infinite intervals: 
\begin{equation}
\begin{split}
D_{1,L} & = \big\{(x, \lambda) \ : \ x = - \infty, \ \lambda \in [0, \lambda_\infty] \big\} \\
D_{2,L} & = \big\{(x, \lambda) \ : \ x \in [- \infty, L], \ \lambda = 0 \big\} \\
D_{3,L} & = \big\{(x, \lambda) \ : \ x = L, \ \lambda \in [0, \lambda_\infty] \big\} \\
D_{4,L} & = \big\{(x, \lambda) \ : \ x \in [-\infty, L], \  \lambda = \lambda_\infty \big\}.
\end{split}
\end{equation}
 The entire path will be denoted as 
$$ D_L = \bigcup_{i = 1}^4 D_{i,L}.$$
See Figure \ref{fig: parameter box} for a depiction of this path. 

\begin{figure}[H]
\centering
\begin{tikzpicture}[scale=1]
      \draw[thick, <->] (4, 0) -- (-4, 0) node[below left] {$\lambda$};
      \draw[thick, ->] (-3, -1) -- (-3, 2) node[above] {$x$};
      \draw[thick] (3, -1.5) -- (3, -1);
      \draw[thick] (-3, -1.5) -- (-3, -1);
      \draw[thick, -] (3, -1) -- (3, 1.5);
      \draw[thick] (3, -1.5) -- (-3,-1.5);
      \draw[thick, -] (3, 1.5) -- (-3, 1.5);


      \filldraw[black] (-3,0) circle (1pt) node[below right] {$0$};
      \filldraw[black] (3,0) circle (1pt) node[below left] {$\lambda_\infty$};
      \filldraw[black] (-3,1.5) circle (1pt) node[above right] {$L$};
      \filldraw[black] (-3, -1.5) circle (1pt) node[below] {$-\infty$};

      \draw[black]   (-3, -.5) node[below left]{$D_{2,L}$};
      \draw[black]   (3, -.5) node[below right]{$D_{4,L}$};
      \draw[black]   (0, -1.5) node[above]{$D_{1,L}$};
      \draw[black]   (0, 1.5) node[below]{$D_{3,L}$};

    \end{tikzpicture}
    \caption{The path $D_L$.}\label{fig: parameter box}
\end{figure}
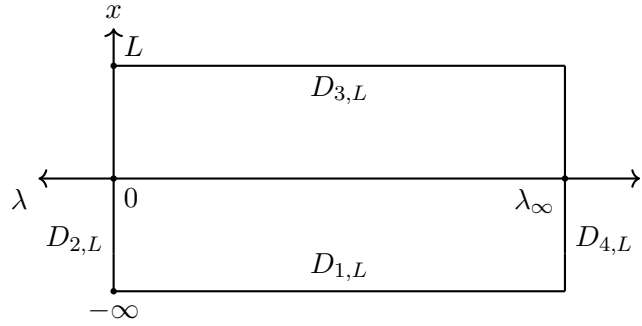

We now heuristically describe our approach to count eigenvalues of \eqref{eq: SH eigenvalue problem on R} on the domain $x \in (-\infty, L]$. First, we argue that the Maslov index of $\ell(x, \lambda)$ on $D_L$ with respect to $\ell_*^{sand}$ can be written as a sum over the $4$ segments of $D_L$ is $0$: 
\begin{equation}\label{eq: heuristic discussion 1}
  \text{Mas}(\ell(x,\lambda), \ell_*^{sand}, D_L) = \sum_{i = 1}^4 \text{Mas}(\ell(x,\lambda), \ell_*^{sand}, D_{i,L}) = 0.
\end{equation}
Next, we find that $\ell(x;\lambda) \cap \ell_*^{sand} = \{0\}$ on $D_{1,L}$ and $D_{4,L}$ so that we can rearrange \eqref{eq: heuristic discussion 1} to obtain
$$ \text{Mas}(\ell(x, \lambda), \ell_*^{sand}, D_{2,L}) = -\text{Mas}(\ell(x, \lambda), \ell_*^{sand}, D_{3,L}).$$
Intersections of $\ell(x, \lambda)$ and $\ell_*^{sand}$ on $D_{3,L}$ correspond to eigenvalues and intersections on $D_{2,L}$ are called conjugate points. Critical to concluding that the number of eigenvalues coincides with the number of conjugate points is the notion of path monotonicity. This means that all of the intersections of $\ell(x,\lambda)$ and $\ell_*^{sand}$ on $D_{2,L}$ contribute to the Maslov index with fixed sign and intersections on $D_{3,L}$ contribute to the Maslov index with opposite fixed sign. This is the third thing we will justify and requires the use of the higher order crossing forms developed in the previous section. 

This information is summarized in Figure \ref{fig: maslov box}. This figure is sometimes referred to as the Maslov box. We will rigorously justify this strategy to prove the following theorem:  
\begin{theorem}\label{thm: morse index half line} Suppose that Hypothesis \ref{hyp:degeneracy} is satisfied. Then the number of positive eigenvalues of $\mathcal L_L$ is equal to the number of conjugate points in $(-\infty, L)$ counted with multiplicities. 
\end{theorem}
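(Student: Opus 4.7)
My plan is to execute the Maslov box argument summarized in Figure \ref{fig: maslov box}. The three ingredients needed are (i) vanishing total Maslov index around the closed loop $D_L$; (ii) vanishing contributions along the two sides $D_{1,L}$ and $D_{4,L}$; and (iii) sign-definite monotonicity of crossings on the remaining two sides $D_{2,L}$ and $D_{3,L}$, so that the Maslov indices along them literally count (signed) crossings.

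For (i), the loop $D_L$ is contractible in parameter space and the unstable subspace $\ell(x,\lambda)$ extends continuously onto the closed rectangle (the boundary value at $x=-\infty$ being well-defined by hyperbolicity of $B_\infty(\lambda)$, Lemma \ref{lemma: B infinity hyperbolic}, and Hypothesis \ref{hyp: nonlinearity} together with \cite{coppel}), so homotopy invariance (Lemma \ref{lemma: homotopic to regular}(b)) combined with additivity (Lemma \ref{lemma: homotopic to regular}(a)) gives $\sum_{i=1}^{4}\text{Mas}(\ell(x,\lambda),\ell_*^{sand}, D_{i,L}) = 0$. For (ii), along $D_{1,L}$ the plane $\ell(-\infty,\lambda)$ is the unstable subspace of $B_\infty(\lambda)$; using the block structure in \eqref{eq: B_inf matrix} I would compute this subspace explicitly and verify that it has trivial intersection with $\ell_*^{sand}$ for every $\lambda \in [0,\lambda_\infty]$. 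Along $D_{4,L}$, a standard large-$\lambda$ estimate (exploiting that as $\lambda \to \infty$ the unstable subspace rotates to a fixed plane determined by the leading symbol of $JC$) shows that for $\lambda_\infty$ sufficiently large the intersection is trivial for all $x\in[-\infty,L]$. Combined with (i), this yields
\[\text{Mas}(\ell(x,0),\ell_*^{sand}, D_{2,L}) = -\text{Mas}(\ell(L,\lambda),\ell_*^{sand}, D_{3,L}).\]

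For the $D_{3,L}$ piece, the crossings are in one-to-one correspondence with positive eigenvalues of $\mathcal L_L$. I would establish $\lambda$-monotonicity by applying Theorem \ref{thm: RS main result}(c): the derivative of the frame with respect to $\lambda$ is governed by $\partial_\lambda C$, which from \eqref{eq: symm C} is a rank-one positive semidefinite matrix with the single nonzero entry in the $(1,1)$ position. A direct calculation reduces the first-order crossing form to a manifestly positive quadratic form on the crossing kernel, and non-degeneracy follows because a vector $v$ in the crossing kernel cannot have a vanishing first component without forcing $v=0$ (using the boundary condition imposed by $\ell_*^{sand}$ and the equation \eqref{eq: linearized first order}). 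Hence every $D_{3,L}$-crossing contributes its full multiplicity with fixed positive sign, and after handling the $\lambda=0$ endpoint via an appropriate half-contribution convention, the right-hand side counts $-\#\{\text{positive eigenvalues of }\mathcal L_L\}$.

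The main obstacle is the $D_{2,L}$ piece, where the crossings are degenerate: for the Swift-Hohenberg operator the first- and second-order crossing forms vanish identically on $\ell(x_0,0) \cap \ell_*^{sand}$, and one is forced into the third-order regime developed in Section \ref{section: maslov index for nonregular crossings}. Under Hypothesis \ref{hyp:degeneracy} the intersections are at most one-dimensional and fully degenerate, so Theorem \ref{theorem: maslov gen tang} with $j=3$ applies and reduces each contribution to $\text{sign}\,Q^{(3)}(v)$. The crux of the argument is therefore a direct calculation, using the graph representation $v + A(x)v \in \ell(x)$ and three differentiations of $\omega(v,A(x)v)$, showing that $Q^{(3)}(v)$ has the same fixed sign at every conjugate point, and that this sign is opposite to the fixed sign obtained on $D_{3,L}$ so that the two counts match in absolute value. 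Once this sign calculation is in hand, rearranging the additive decomposition above identifies the number of conjugate points in $(-\infty,L)$ (counted with multiplicities) with the number of positive eigenvalues of $\mathcal L_L$, proving the theorem.
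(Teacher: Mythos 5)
Your outline is the paper's own Maslov-box argument (vanishing total index on the contractible loop $D_L$, no crossings on $D_{1,L}$ and $D_{4,L}$, monotonicity on $D_{2,L}$ and $D_{3,L}$), and most of the skeleton is right. Two of your specific claims, however, do not match what actually happens, and one of them obscures where the real difficulty of the proof lies.

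First, it is not true that on $D_{2,L}$ ``the first- and second-order crossing forms vanish identically'' at every conjugate point. The computation in Lemma \ref{lem:first-ordercrossing-form} gives $Q^{(1)}(v)=(p_2(s_*;\lambda))^2$, which is strictly positive whenever the second component of the crossing vector is nonzero; those crossings are ordinary regular crossings and need no higher-order theory. Only when $p_2(s_*)=0$ does one descend to $Q^{(2)}=0$ and $Q^{(3)}=2p_3(s_*)^2>0$ (Lemma \ref{lem:simple-nonregular}). The proof therefore requires the case analysis following Lemma \ref{lem:cases}: case (I) regular, case (II) simple but fully degenerate, and case (III) a two-dimensional intersection in which the crossing form is degenerate \emph{but not identically zero} (one zero eigenvalue and one positive eigenvalue), so that Theorem \ref{theorem: maslov gen tang} does not apply at all. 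Hypothesis \ref{hyp:degeneracy} is there precisely to exclude case (III); it does not make the crossings ``fully degenerate,'' it makes them one-dimensional so that degeneracy of the $1\times 1$ form is automatically full degeneracy. A proof that jumps straight to $Q^{(3)}$ at every conjugate point would be computing the wrong object at the regular crossings and would miss the only place the hypothesis is actually used.

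Second, two smaller points. Your sign for the $\lambda$-crossing form on $D_{3,L}$ is reversed: the computation in Proposition \ref{prop: lam mono} yields $Q^{(1)}(v)=-\int_{-\infty}^{s}z_1(x,\lambda_*)^2\,dx<0$, i.e.\ the spectral crossings are negative and the spatial ones positive; since only the relative sign enters the final count this does not change the conclusion, but the claim of a ``manifestly positive'' form is incorrect as stated. Finally, for $D_{4,L}$ you propose a large-$\lambda$ rotation argument on the unstable bundle; the paper instead integrates by parts to bound every eigenvalue of $\mathcal L_s$ above by $\|f'\circ\varphi\|_{L^\infty}$ uniformly in $s$ (Lemma \ref{lemma: mathcal L_s evals are bdd below}) and invokes the conjugate-point/eigenvalue duality of Lemma \ref{lemma: conj pt eval corr}. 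Your route can be made to work but requires uniformity in $x\in[-\infty,L]$ of the limiting plane, which the energy estimate sidesteps entirely.
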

 As mentioned in \S \ref{S:intro}, Hypothesis \ref{hyp:degeneracy} states that all crossings are simple. 
In order to evaluate $\ell(x, \lambda) = \mathbb E^u(x; \lambda)$ at $x = -\infty$, first define $\mathbb E^u_-(-\infty, \lambda)$ to be the spectral subspace corresponding to the eigenvalues of the matrix $B_\infty$ in \eqref{eq: B_inf matrix}. We then compactify the spatial domain. Define the functions 
$$s(\sigma) = \frac{1}{2} \ln \left(\frac{1+\sigma}{1-\sigma} \right), \qquad \sigma(s) = \tanh(s), \qquad \sigma\in[-1,1), \ s \in [-\infty, \infty).$$
We can now view the map $\ell(s, \lambda) = \ell(s(\sigma), \lambda)$ as
\begin{equation}\label{eq: compactified phi}
\ell: [-1, 1)\times [0, \lambda_\infty] \to \Lambda(2), \qquad (\sigma,\lambda) \mapsto \ell(s(\sigma),\lambda).
\end{equation}
In most of this discussion, we suppress the $\sigma$ notation and simply refer to $s \in [-\infty, \infty)$. Note that $s(\sigma)$ and $\sigma(s)$ are both continuous on $[-\infty, \infty)$.

\begin{figure}[H]
\centering
\begin{tikzpicture}[scale=1.5]

      \draw[thick, <->] (4, 0) -- (-4, 0) node[below left] {$\lambda$};
      \draw[thick, ->] (-3, -1) -- (-3, 2) node[above] {$s$};
      \draw[thick, dash pattern={on 3pt off 4pt}] (3, -1.5) -- (3, -1);
      \draw[thick, dash pattern={on 3pt off 4pt}] (-3, -1.5) -- (-3, -1);
      \draw[thick, -] (3, -1) -- (3, 1.5);
      \draw[thick, dash pattern={on 3pt off 4pt}] (3, -1.5) -- (-3,-1.5);
      \draw[thick, -] (3, 1.5) -- (-3, 1.5);

      \draw[-{Latex[scale=1.5]}] (3, 1) -- (3, .9) node[right] {};
      \draw[-{Latex[scale=1.5]}] (-3, -1) -- (-3, -.9) node[right] {};
      \draw[-{Latex[scale=1.5]}] (1.4, 1.5) -- (1.5, 1.5) node[right] {};
      \draw[-{Latex[scale=1.5]}] (-1.4, -1.5) -- (-1.5, -1.5) node[right] {};

      \filldraw[black] (-3,0) circle (1pt) node[below right] {$0$};
      \filldraw[black] (3,0) circle (1pt) node[below left] {$\lambda_\infty$};
      \filldraw[black] (-3,1.5) circle (1pt) node[above right] {$L$};
      \filldraw[black] (-3, -1.5) circle (1pt) node[below] {$-\infty$};

      \draw[black]   (-3, -.5) node[below left]{$D_{2,L}$};
      \draw[black]   (3, -.5) node[below right]{$D_{4,L}$};
      \draw[black]   (0, -1.5) node[above]{$D_{1,L}$};
      \draw[black]   (0, 1.5) node[below]{$D_{3,L}$};

      \filldraw[black] (-1.5,2)  node[anchor=north] {eigenvalues};
      \filldraw[blue] (-1.5,1.5) circle (2pt) node[] {};
      \filldraw[blue] (-1.7,1.5) circle (2pt) node[] {};

      \filldraw[black] (-3.5,1)  node[anchor=east] {\shortstack{conjugate\\ points}};
      \filldraw[red] (-3,1) circle (2pt) node[] {};
      \filldraw[red] (-3,.5) circle (2pt) node[] {};

      \filldraw[black] (-1.5,-1.5)  node[anchor=north] {no crossings};
      \filldraw[black] (3.5,1)  node[anchor=west] {\shortstack{no \\ crossings}};
    \end{tikzpicture}
    \caption{The domain of $\ell(s, \lambda)$ in $(s, \lambda)$ parameter space. We use $s$ instead of $x$ to indicate compactification of the spatial domain.}\label{fig: maslov box}
\end{figure}
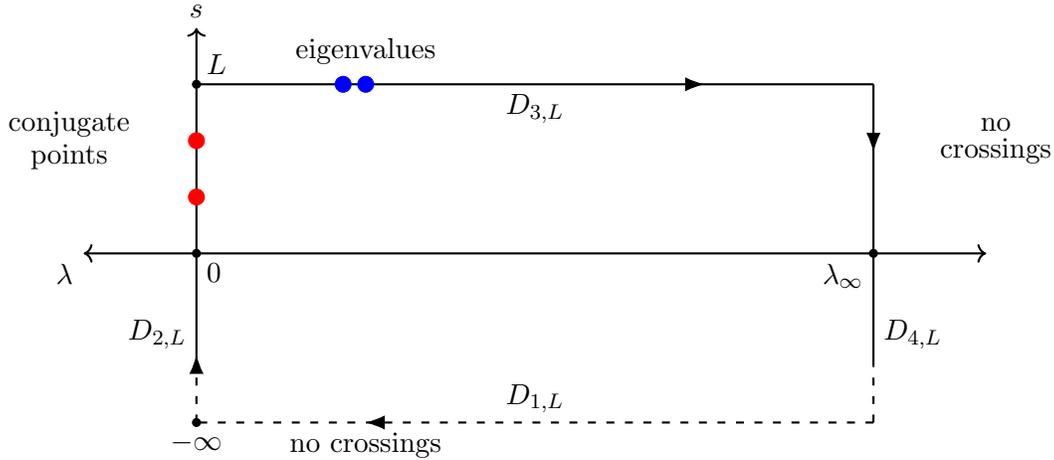

We will be interested in the intersection of $\ell(x, \lambda) = \mathbb E^u_-(x; \lambda)$ with a reference plane.
\begin{definition} For fixed $\lambda \in \R$, a point $s \in (-\infty, L]$ is called a $\lambda$-conjugate point if $\ell(s,\lambda) \cap \ell_* \neq \{0\}$. Since we will pay special attention to $\lambda$-conjugate points with $\lambda = 0$, we simply refer to these as conjugate points. 
\end{definition}

We will employ the same strategy as \cite{BCJ18} and first prove that for fixed $L > 0$, the number of unstable eigenvalues of the restriction of $\mathcal L$ to $(-\infty, L]$ is equal to the number of conjugate points. This proves Theorem \ref{thm: morse index half line}. Second, we will then extend this result to the entire real line.

\subsection{Restriction to Half Line}\label{section: half line}
We begin by establishing some necessary properties of $\ell(s,\lambda) = \mathbb E^u_-(s, \lambda)$. The proof of the following result can be found in \cite{BCJ18}[Lemma 2.5].
\begin{lemma} Fix $L > 0$ and $\lambda_\infty > 0$. Then, the map $(\sigma, \lambda) \mapsto \ell(s(\sigma),\lambda)$ is 
\begin{enumerate}[a)]
\item continuous on $[0, \lambda_\infty] \times [-1,\sigma(L)]$;
\item $C^1$ on $[0, \lambda_\infty]\times (-1,\sigma(L)]$. 
\end{enumerate} 
\end{lemma}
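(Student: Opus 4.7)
The plan is to build $\ell(s,\lambda)$ as the unstable subspace coming from an exponential dichotomy on $(-\infty,L]$ for the Hamiltonian system \eqref{eq: linearized first order}, and then read off its regularity from the regularity of the underlying evolution and the dichotomy projection.

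First I would set up the exponential dichotomy. By Lemma \ref{lemma: B infinity hyperbolic}, $B_\infty(\lambda)$ is hyperbolic for every $\lambda\in[0,\lambda_\infty]$, with spectral gap depending continuously (in fact analytically) on $\lambda$, and the perturbation $B(x,\lambda)-B_\infty(\lambda)$ decays exponentially as $x\to-\infty$ uniformly in $\lambda$ on $[0,\lambda_\infty]$. By the standard roughness theory for exponential dichotomies (Coppel \cite{coppel}), this gives, for each $\lambda\in[0,\lambda_\infty]$, an exponential dichotomy of $q'=B(x,\lambda)q$ on $(-\infty,L]$ whose unstable projection $P^u(x,\lambda)$ has range $\mathbb{E}^u_-(x,\lambda)$ and satisfies
\[
\|P^u(x,\lambda)-P^u_\infty(\lambda)\|\leq K e^{-Cx} \qquad \text{as } x\to-\infty,
\]
with $P^u_\infty(\lambda)$ the spectral projection onto the unstable eigenspace of $B_\infty(\lambda)$, and constants $K,C$ uniform in $\lambda$ on $[0,\lambda_\infty]$. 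Because the coefficient matrix and its asymptotic limit both depend $C^1$ (even analytically) on $\lambda$, the roughness theory also yields joint $C^1$ dependence of $P^u(x,\lambda)$ on $(x,\lambda)$ for $x<L$ finite.

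For part (a), continuity of $(\sigma,\lambda)\mapsto\ell(s(\sigma),\lambda)$ on the interior $(-1,\sigma(L)]\times[0,\lambda_\infty]$ follows immediately from the joint continuity of $P^u(x,\lambda)$ just described, together with the continuity of $\sigma\mapsto s(\sigma)$. At the boundary $\sigma=-1$, I would use the dichotomy estimate to show that $\mathbb{E}^u_-(s,\lambda)$ converges, as $s\to-\infty$, to $\mathbb{E}^u_-(-\infty,\lambda):=\operatorname{range} P^u_\infty(\lambda)$ uniformly in $\lambda$: picking a frame of $P^u_\infty(\lambda)$ (which varies continuously in $\lambda$ since $B_\infty$ is hyperbolic on the whole compact $\lambda$-interval) and pulling it through the dichotomy gives a frame for $\mathbb{E}^u_-(s,\lambda)$ whose distance from the asymptotic frame is $O(e^{-C|s|})$ uniformly in $\lambda$. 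Convergence of frames implies convergence in the Grassmannian, so continuity extends to the closed interval $[-1,\sigma(L)]$.

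For part (b), on the half-open set $(-1,\sigma(L)]\times[0,\lambda_\infty]$ the spatial variable $s$ stays bounded, and $\ell(s,\lambda)$ can be obtained by evolving any fixed smooth $\lambda$-dependent frame of the asymptotic unstable subspace (picked, say, at some fixed base point $x_0\ll 0$) under the flow $\Phi(s,x_0;\lambda)$ of $q'=B(x,\lambda)q$. The coefficient matrix $B$ is smooth in $x$ and $\lambda$, hence $\Phi$ is jointly $C^1$ in $(s,\lambda)$ for $s\in(-\infty,\sigma(L)]$ by standard ODE theory. Combined with the $C^1$ dependence of $P^u(x_0,\lambda)$ on $\lambda$ from the roughness theorem, this shows that $(\sigma,\lambda)\mapsto\ell(s(\sigma),\lambda)$ is $C^1$ on $(-1,\sigma(L)]\times[0,\lambda_\infty]$, since $s(\sigma)$ is smooth on $\sigma\in(-1,1)$.

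The main obstacle is the endpoint $\sigma=-1$, where $s=-\infty$ and the flow itself is not defined: one cannot directly transport a frame. The key technical input is the uniform (in $\lambda$) exponential decay $\|B(x;\lambda)-B_\infty(\lambda)\|\leq K_B e^{-C_B|x|}$ from Lemma \ref{lemma: B infinity hyperbolic}, which is what allows the dichotomy projection $P^u(x,\lambda)$ to be continuously extended to $x=-\infty$ and is what delivers continuity at $\sigma=-1$. Differentiability at $\sigma=-1$ is not claimed, and need not hold, since the compactification $\sigma=\tanh s$ can destroy smoothness at the spatial infinity.
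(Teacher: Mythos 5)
Your proposal is correct. Note that the paper does not supply its own proof of this lemma: it simply defers to \cite{BCJ18}[Lemma 2.5], and the argument you sketch --- constructing $\mathbb E^u_-(x,\lambda)$ as the range of the unstable dichotomy projection via Coppel's roughness theory, using the uniform exponential convergence $\|B(x;\lambda)-B_\infty(\lambda)\|\leq K_Be^{-C_B|x|}$ from Lemma \ref{lemma: B infinity hyperbolic} to extend continuously to $\sigma=-1$, and obtaining $C^1$ regularity away from spatial infinity from smooth dependence of the flow and of the projections on $(x,\lambda)$ --- is exactly the standard route taken in that reference, so your sketch matches the intended proof.
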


If we fix a reference plane $\ell_*$, then the Maslov index of $\ell(s,\lambda)$ with respect to $\ell_*$ on a given parameter domain is well defined. After making a suitable choice for $\ell_*$, the goal of this section is to justify Figure \ref{fig: maslov box}. 

With the extension of $\ell(s, \lambda)$ to $s = -\infty$, one can compute the path $\ell$ on the compactification of $D_L$ using $\sigma(s)$. Since $D_L$ is contractible, the homotopy property in  Lemma \ref{lemma: homotopic to regular} implies $\text{Mas}\left(\ell, \ell_*; D_L\right) = 0$, for any reference plane $\ell_*$. Finally, since $\ell(s, \lambda)$ is continuous on $D_L$, we can conclude that 
$$ \text{Mas}\left(\ell, \ell_*; D_L\right) = \sum_{i = 1}^4 \text{Mas}(\ell, \ell_*; D_{i,L}).$$
Putting these together implies that 
$$\sum_{i = 1}^4 \text{Mas}(\ell, \ell_*; D_{i,L}) = 0.$$
Therefore, we can compute the Maslov index of $\ell(s, \lambda)$ with respect to $\ell_*^{sand}$ on each segment $D_{i,L}$ individually and their sum must be $0$. 

We now restrict the operator $\mathcal L$ to the half line $(-\infty, L]$ for fixed $L > 0$ and choose our reference plane. 

\begin{definition}\label{def: sandwich plane} Define the Lagrangian plane 
$$ \ell_*^{sand} = \{(q_1, \dots, q_4) \ : \ q_1 = q_4 = 0, \ q_2, q_3 \in \R \} .$$
We refer to this plane as the sandwich plane.
\end{definition} 
This subspace has an orthonormal frame given by
$$ T = \begin{pmatrix} 0 & 0 \\ 1 & 0 \\ 0 & 1 \\ 0 & 0 
\end{pmatrix} .$$

In order to see why this frame is a natural choice, first denote the operator restricted to the half line as 
\begin{equation}\label{eq: SL}
\mathcal L_L u := -\partial_x^4 u - 2 \partial_x^2 u - u + f \circ \varphi(x) u, \qquad  u : (-\infty, L] \to \R,
\end{equation}
and consider the eigenvalue problem 
\begin{equation} 
\begin{split}
\mathcal L_L u  & = \lambda u, \qquad  u : (-\infty, L] \to \R \\
u(L) & = u'(L) = 0.
\end{split}
\end{equation}
The domain of this operator is given by 
$$\text{dom}\mathcal L_L = \left\{ u \in H^4\big((-\infty, L], \R \big) \ : \ u(L) = 0, \ u'(L) = 0\right\} .$$

If $u(x)$ is a solution to \eqref{eq: SL}, then using the symplectic change of coordinates given in \eqref{eq: symplectic change of coords sh}, we can compute 
$$ \begin{pmatrix} 0\\ 0 \end{pmatrix}  = \begin{pmatrix}  -u'(L) \\ u(L)
\end{pmatrix} = T^* J \begin{pmatrix} u(L) \\ u''(L) \\ u'''(L)+2u'(L) \\ u'(L)
\end{pmatrix} =0.$$
In other words, $\ell_*^{sand} \cap \mathbb E^u_-(L; \lambda) \neq \{0\}$ if and only if $\lambda$ is an eigenvalue. Since we seek to detect eigenvalues with the intersections of these two subspaces, $\ell_*^{sand}$ is a natural choice of reference plane. 

We will first prove our results on the half line and then extend them to the full line. In doing so, it will be helpful to think of the right endpoint of the spatial domain as a parameter. Within this framework, the operator and eigenvalue problem we are interested in are given by 
\begin{equation}
\ADD{\mathcal L_s = -\partial_x^4 - 2 \partial_x^2 - 1 + f' \circ \varphi(x),}
\end{equation}
\begin{equation}\label{eq: half line eval prob BC}
\begin{split}\mathcal L_s u &  =  \lambda u,    \\
u(s) & = u'(s) = 0.
\end{split}
 \end{equation}

The below lemma illustrates a duality between conjugate points and eigenvalues. 
\begin{lemma}\label{lemma: conj pt eval corr} Suppose $\lambda \in \R$ and $s \in (- \infty, L]$. Then $\lambda$ is an eigenvalue of the $4$th order ODE given in Equation \eqref{eq: half line eval prob BC} if and only if $s$ is a $\lambda$-conjugate point of the first order system given by Equation \eqref{eq: linearized first order}. Furthermore, the multiplicity of the eigenvalue coincides with the dimension of the subspace $\ell_*^{sand} \cap \ell(s,\lambda)$.
\end{lemma}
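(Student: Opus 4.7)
The plan is to directly translate both sides of the biconditional into the same statement about the first-order Hamiltonian system \eqref{eq: linearized first order}, using the symplectic change of variables \eqref{eq: symplectic change of coords sh} as the bridge.

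First I would set up the correspondence between solutions of the fourth-order equation and trajectories of \eqref{eq: linearized first order}. The change of variables $q = (u, u_{xx}, u_{xxx}+2u_x, u_x)^T$ is a linear bijection sending classical solutions $u$ of $\mathcal{L}_s u = \lambda u$ on $(-\infty, s]$ to solutions $q$ of $q' = B(x,\lambda) q$. The domain condition $u \in H^4\bigl((-\infty, s]\bigr)$ forces $u$ and its derivatives up to order three to decay as $x \to -\infty$, which is equivalent to $\|q(x)\| \to 0$ as $x \to -\infty$. Because $B_\infty(\lambda)$ is hyperbolic (Lemma \ref{lemma: B infinity hyperbolic}) and $B(x;\lambda)$ approaches $B_\infty(\lambda)$ exponentially fast via \eqref{eq: matrix conversion B in hypothesis}, Coppel's theory of exponential dichotomies \cite{coppel} identifies the set of such decaying trajectories at $x=s$ with exactly $\ell(s,\lambda) = \mathbb{E}^u_-(s,\lambda)$.

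Next I would read off the boundary conditions in the new variables. From \eqref{eq: symplectic change of coords sh} the conditions $u(s) = 0$ and $u'(s) = 0$ are, respectively, $q_1(s) = 0$ and $q_4(s) = 0$. By Definition \ref{def: sandwich plane}, this is precisely the condition $q(s) \in \ell_*^{sand}$. Combining the two observations, a function $u$ is a nonzero element of $\ker(\mathcal{L}_s - \lambda I)$ on $\mathrm{dom}\,\mathcal{L}_s$ if and only if the corresponding trajectory $q$ satisfies $q(s) \in \ell(s,\lambda) \cap \ell_*^{sand}$ and $q(s) \neq 0$. Hence $\lambda$ is an eigenvalue exactly when $s$ is a $\lambda$-conjugate point.

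For the multiplicity statement, I would use that both the change of variables $u \mapsto q$ and the evaluation-at-$s$ map $q(\cdot) \mapsto q(s)$ are linear isomorphisms on the relevant spaces: the first is bijective on solution spaces, and the second is bijective from the space of solutions of \eqref{eq: linearized first order} lying in $\ell(s,\lambda)$ onto $\ell(s,\lambda)$ itself (uniqueness for the linear ODE). Composing, the geometric multiplicity $\dim \ker(\mathcal{L}_s - \lambda I)$ equals $\dim\bigl(\ell(s,\lambda) \cap \ell_*^{sand}\bigr)$.

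The only real obstacle is the identification of ``$H^4$ decay at $-\infty$'' with ``lies in the unstable subspace at $x=s$,'' which requires hyperbolicity of $B_\infty(\lambda)$; this is delivered by Hypothesis \ref{hyp: nonlinearity} via Lemma \ref{lemma: B infinity hyperbolic} for $\lambda \geq 0$, and more generally for any $\lambda$ outside the essential spectrum, which is the only range in which one expects eigenvalues to occur. Everything else is an algebraic unpacking of the change of variables and the definition of $\ell_*^{sand}$.
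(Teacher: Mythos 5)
Your proposal is correct and follows the same route the paper takes (the paper defers to \cite{BCJ18}[Proposition 2.8] and sketches the key computation $T^*Jq(s) = (-u'(s), u(s))^T$ just before the lemma): translate the boundary conditions $u(s)=u'(s)=0$ into $q(s)\in\ell_*^{sand}$ via \eqref{eq: symplectic change of coords sh}, identify $H^4$ decay at $-\infty$ with membership in $\mathbb{E}^u_-(s,\lambda)$ using hyperbolicity and exponential dichotomies, and obtain the multiplicity statement from the linearity and injectivity of the solution-to-trace map. Your remark that hyperbolicity is only needed for $\lambda$ to the right of the essential spectrum is an appropriate handling of the ``$\lambda\in\R$'' phrasing in the statement.
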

The proof is virtually identical to that in \cite{BCJ18}[Proposition 2.8].
\subsubsection{No Crossings on the Bottom and Left Side}

Recall that $\text{Mas}(\ell, \ell_*^{sand}; D_{4,L})$ and $\text{Mas}(\ell, \ell_*^{sand}; D_{1,L})$ denote the Maslov index of $\ell(s, \lambda)$ evaluated on the right and bottom sides of the Maslov box respectively (Figure \ref{fig: maslov box}). We will argue that $\text{Mas}(\ell, \ell_*^{sand}; D_{1,L}) = 0$ and $\text{Mas}(\ell, \ell_*^{sand}; D_{4,L}) = 0$ provided $\lambda_\infty$ is sufficiently large. 

First, we argue that the eigenvalues for the operator $\mathcal L_s$ defined in Equation \eqref{eq: half line eval prob BC} for $s\in (-\infty, L]$ are bounded from above. 

\begin{lemma}\label{lemma: mathcal L_s evals are bdd below}
Fix $s \in (-\infty, L]$ and consider the differential operator given in \eqref{eq: half line eval prob BC}.
Then, any eigenvalue $\lambda$ of $\mathcal L_s$ satisfies 
$$ \lambda \leq \|f\circ \varphi(x)\|_{L^\infty(-\infty, s)}.$$
\end{lemma}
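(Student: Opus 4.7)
The plan is to bound the eigenvalue via the Rayleigh quotient associated to $\mathcal L_s$, using the boundary conditions $u(s)=u'(s)=0$ and decay at $-\infty$ to make all boundary terms vanish, together with a Young-type inequality to absorb the indefinite second-order piece into the fourth-order term.

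First, I would let $u\in H^4((-\infty,s])$ be an eigenfunction with eigenvalue $\lambda$, i.e. $\mathcal L_s u=\lambda u$ with $u(s)=u'(s)=0$ and $u,u',u'',u''' \to 0$ as $x\to -\infty$ (the latter follows from $u\in H^4$). Taking the $L^2((-\infty,s))$ inner product with $u$, I would integrate by parts and use the boundary conditions to obtain
\begin{equation*}
\lambda\,\|u\|^2_{L^2} \;=\; -\|u_{xx}\|^2_{L^2} + 2\|u_x\|^2_{L^2} - \|u\|^2_{L^2} + \int_{-\infty}^{s} f'(\varphi(x))\,u(x)^2\,dx,
\end{equation*}
where every boundary contribution at $x=s$ vanishes because of $u(s)=u'(s)=0$ (so e.g.\ $[u_{xxx}u]_{x=s}=0$ and $[u_{xx}u']_{x=s}=0$), and all contributions at $x=-\infty$ vanish by the $H^4$ decay.

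Next, I would control the indefinite term $2\|u_x\|_{L^2}^2$. Integrating by parts once more (again boundary terms vanish) gives $2\|u_x\|_{L^2}^2 = -2\langle u,u_{xx}\rangle_{L^2}$, and then Young's inequality yields
\begin{equation*}
2\|u_x\|_{L^2}^2 \;=\; -2\langle u,u_{xx}\rangle_{L^2} \;\le\; \|u\|_{L^2}^2 + \|u_{xx}\|_{L^2}^2.
\end{equation*}
Substituting this bound into the previous identity, the fourth-order and zeroth-order contributions cancel and I am left with
\begin{equation*}
\lambda\,\|u\|_{L^2}^2 \;\le\; \int_{-\infty}^{s} f'(\varphi(x))\,u(x)^2\,dx \;\le\; \|f'\!\circ\!\varphi\|_{L^\infty(-\infty,s)}\,\|u\|_{L^2}^2.
\end{equation*}
Dividing by $\|u\|_{L^2}^2>0$ gives the claimed bound (with $f'\circ\varphi$ in place of $f\circ\varphi$, correcting what appears to be a typo in the statement).

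There is no real obstacle here; the only point that deserves care is justifying that all boundary terms at $-\infty$ vanish. This is routine from $u\in H^4$, since eigenfunctions of the restricted operator decay together with all their derivatives up to order three as $x\to-\infty$ (alternatively, one can invoke exponential dichotomies on $\mathbb{R}^-$ guaranteed by Hypothesis \ref{hyp: nonlinearity}, which give exponential decay of $u$ and its derivatives and thus pointwise vanishing of every boundary term encountered above).
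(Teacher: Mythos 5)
Your proposal is correct and follows essentially the same route as the paper: pair the eigenvalue equation with the eigenfunction, integrate by parts using $u(s)=u'(s)=0$ and decay at $-\infty$, and observe that the fourth-, second-, and zeroth-order contributions combine into a nonpositive quantity (the paper completes the square as $-\|u_{xx}+u\|_{L^2}^2\le 0$, which is exactly your Young's-inequality step in disguise), leaving only the potential term. You also correctly flag that the bound should read $\|f'\circ\varphi\|_{L^\infty}$, matching the quantity that actually appears in the paper's computation.
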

\begin{proof}

Suppose that $\psi$ is an eigenfunction of $\mathcal L_s$ with eigenvalue $\lambda$ on the domain $(-\infty, s]$. By multiplying the both sides of Equation \eqref{eq: half line eval prob BC} by $\psi$ and integrating in $x$ from $-\infty$ to $s$ for any $s$ chosen to be in $(-\infty, L]$, we obtain: 
\begin{align*}  \lambda \|\psi\|^2_{L^2\big((-\infty, s), \R\big)}& =-\int_{-\infty}^s \left(\partial_x^4 \psi\right) \psi \ dx - 2\int_{-\infty}^s \left(\partial_x^2\psi\right) \psi 
 dx  - \int_{-\infty}^s \psi^2 \ dx + \int_{-\infty}^s f'\circ\varphi(x)\psi^2 \ dx .
\intertext{By performing integration by parts twice on the first integral and using $\psi(s) = \psi'(s) = 0$, we can write}
 \lambda \|\psi\|^2_{L^2\big((-\infty, s), \R\big)} & =  -\int_{\infty}^s \left( \partial_x^2 \psi \right)^2 \ dx - 2\int_{-\infty}^s \left(\partial_x^2\psi\right) \psi \ dx - \int_{-\infty}^s \psi^2 \ dx+ \int_{-\infty}^s f'\circ\varphi(x)\psi^2 \ dx \\
& =  -\int_{-\infty}^s \big(\partial_x^2\psi (x) + \psi(x) \big)^2 \ dx + \int_{-\infty}^s f'\circ\varphi(x)\psi^2 \ dx \\ 
& = -\left\|\partial_x^2\psi + \psi\right\|^2_{L^2((-\infty, s), \R)} + \int_{-\infty}^s f'\circ\varphi(x)\psi^2 \ dx \\
& \leq  \|f' \circ \varphi\|_{L^\infty (-\infty, s)} \|\psi\|^2_{L^2((-\infty, s), \R)}.
\end{align*}
Thus, 
$$\lambda \|\psi\|^2_{L^2\big((-\infty, s), \R\big)} \leq  \|f \circ \varphi\|_{L^\infty (-\infty, s)} \|\psi\|^2_{L^2((-\infty, s), \R)}.$$
Since $\psi$ is an eigenfunction, it is not identically $0$ and has nonzero norm. Cancelling the $\|\psi\|^2$ term from both sides of the above expression, we conclude 
$$\lambda  \leq  \|f \circ \varphi\|_{L^\infty (-\infty, s)}.$$
Thus, if we choose $\lambda_\infty$ such that $ \lambda_\infty > \|f \circ \varphi\|_{L^\infty (-\infty, s)}$, we see that $\mathcal L_s$ has no eigenvalues with $\lambda \geq \lambda_\infty.$
\end{proof}

As we will see in the proof of Proposition \ref{prop: zero left}, the fact that there are zero crossings on the right side of the box is immediate from the previous result.

\begin{proposition}\label{prop: zero left}
If $\lambda_\infty > \|f \circ \varphi(x)\|_{L^\infty}$, then 
$$\text{Mas} (\ell,\ell_*^{sand};D_{4,L})  =0.$$
\end{proposition}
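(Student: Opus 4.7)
The plan is to reduce the statement to ``no crossings on $D_{4,L}$'' and then to combine the spectral flow form of the Maslov index (Theorem \ref{thm: spectral flow RS}) with the duality between conjugate points and eigenvalues already established in Lemma \ref{lemma: conj pt eval corr} and the eigenvalue bound in Lemma \ref{lemma: mathcal L_s evals are bdd below}. Concretely, $D_{4,L}$ is the path on which $\lambda = \lambda_\infty$ is held fixed and $s$ ranges through $[-\infty, L]$. A crossing on this path is exactly a value of $s$ at which $\ell(s, \lambda_\infty) \cap \ell_*^{sand} \neq \{0\}$. If we can show there are no such $s$, then the operator $\Pi J A(s)\Pi$ of Theorem \ref{thm: spectral flow RS} is identically invertible along the path and its signature cannot change, giving $\text{Mas}(\ell, \ell_*^{sand}; D_{4,L}) = 0$.

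For interior points $s \in (-\infty, L]$, I would invoke Lemma \ref{lemma: conj pt eval corr} to note that a crossing at $(s, \lambda_\infty)$ is equivalent to $\lambda_\infty$ being an eigenvalue of $\mathcal{L}_s$. But Lemma \ref{lemma: mathcal L_s evals are bdd below} shows every eigenvalue of $\mathcal{L}_s$ for $s \in (-\infty, L]$ is bounded above by $\|f' \circ \varphi\|_{L^\infty(-\infty, L)}$, so the standing assumption $\lambda_\infty > \|f' \circ \varphi\|_{L^\infty}$ rules out any such crossing immediately.

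The remaining case is the compactified endpoint $s = -\infty$, where $\ell(-\infty, \lambda_\infty) = \mathbb{E}^u_-(-\infty, \lambda_\infty)$ is the unstable spectral subspace of the constant matrix $B_\infty(\lambda_\infty)$ from \eqref{eq: B_inf matrix}. Here I would compute the unstable eigenvectors explicitly. The characteristic polynomial associated to $B_\infty(\lambda)$ factors via $r^2 + 2r^2 + (1 + \lambda - f'(0)) = 0$, giving $r^2 = -1 \pm i\sqrt{\lambda - f'(0)}$; since Hypothesis \ref{hyp: nonlinearity} gives $f'(0) < 0$, the discriminant is strictly negative for every $\lambda \geq 0$, hence for $\lambda_\infty$ in particular. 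This yields two roots $r_1, r_2$ with positive real part, whose eigenvectors in the coordinates $(q_1, q_2, q_3, q_4)$ of \eqref{eq: symplectic change of coords sh} have the form $v_j = (1, r_j^2, r_j^3 + 2 r_j, r_j)^T$. One then checks that the $2 \times 2$ matrix formed by the first and fourth components of $v_1, v_2$, namely $\begin{pmatrix} 1 & 1 \\ r_1 & r_2 \end{pmatrix}$, has determinant $r_2 - r_1 \neq 0$, so that no nontrivial linear combination of $v_1, v_2$ has $q_1 = q_4 = 0$. Hence $\mathbb{E}^u_-(-\infty, \lambda_\infty) \cap \ell_*^{sand} = \{0\}$ as well.

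The only real obstacle is this endpoint computation, and even that is just a short linear-algebra check that the unstable eigenvectors of $B_\infty(\lambda_\infty)$ are transverse to $\ell_*^{sand}$; everything else is an immediate application of the preceding lemma and the localization formulation of the Maslov index. Putting the interior and endpoint arguments together, there are no crossings anywhere on $D_{4,L}$, and Theorem \ref{thm: spectral flow RS} then gives $\text{Mas}(\ell, \ell_*^{sand}; D_{4,L}) = 0$.
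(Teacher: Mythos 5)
Your proof is correct and follows essentially the same route as the paper: a crossing on $D_{4,L}$ at finite $s$ would force $\lambda_\infty$ to be an eigenvalue of $\mathcal L_s$, which the bound in Lemma \ref{lemma: mathcal L_s evals are bdd below} forbids once $\lambda_\infty$ exceeds the stated sup-norm. The only difference is that you also check transversality at the compactified endpoint $s=-\infty$ via the eigenvectors of $B_\infty(\lambda_\infty)$ (a correct and in fact welcome addition, modulo the typo $r^2+2r^2$ for $r^4+2r^2$), whereas the paper's own proof omits this and instead covers that corner point through Proposition \ref{prop: zero bottom}.
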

\begin{proof} Suppose  $s$ is a $\lambda$ conjugate point. Then, by definition, there exists an eigenfunction $\psi$ such that
\begin{equation}\label{eq: zero left S} 
\begin{split}
\mathcal L_s\psi& :=-\partial^4_x\psi - 2 \partial^2_x \psi - \psi + f'\circ\varphi(x)\psi = \lambda \psi 
\end{split}
\end{equation}
where $\psi(s) = \psi'(s) = 0$. By Lemma \ref{lemma: mathcal L_s evals are bdd below}, we know that 
$ \lambda \leq \|f\circ \varphi(x)\|_{L^\infty}.$ Thus, if we choose $\lambda_\infty > \|f\circ \varphi(x)\|_{L^\infty}$, there will be no conjugate points on $D_{4,L}$ and so 
$ \text{Mas} (\ell,\ell_*^{sand}; D_{4,L})  =0,$
as desired.
\end{proof}

Next we show that there are no intersections between $\ell(s,\lambda)$ and $\ell_*^{sand}$ for parameters lying on the bottom of the Maslov box. This result will rely on an explicit characterization of the eigenbasis vectors of the matrix $B_\infty$ given in the below lemma. 

\begin{lemma}\label{lemma: evecs of B}
Let $B_\infty$ be as defined in \eqref{eq: B_inf matrix}. If 
$$ \theta = \arctan \left(-\sqrt{\lambda - f'(0)}\right) \quad \text{ and } r = \sqrt{1 + \lambda - f'(0)},$$
then basis vectors for the real unstable and stable eigenspaces of $B_\infty(\lambda)$ are given via 
\begin{equation}\label{eq: B_inf unstable evecs}
R^u_1 = \begin{pmatrix} \frac{1}{r}\cos \theta \\ 1 \\ \left(\frac{2}{\sqrt r} + \sqrt{r} \right) \cos \left(\frac{\theta}{2} \right)  \\ \frac{1}{\sqrt r} \cos \left(\frac{\theta}{2} \right)
\end{pmatrix}, \quad  R^u_2 = \begin{pmatrix} -\frac{1}{r}\sin \theta \\ 0 \\ \left(\sqrt{r} - \frac{2}{\sqrt r} \right) \sin \left(\frac{\theta}{2} \right)  \\ -\frac{1}{\sqrt r} \sin \left(\frac{\theta}{2} \right)
\end{pmatrix},
\end{equation} 
\begin{equation}\label{eq: B_inf stable evecs}
R_1^s = \begin{pmatrix} \frac{1}{r}\cos \theta \\ 1 \\ \left(-\frac{2}{\sqrt{r}}  - \sqrt r \right) \cos \left( \frac{\theta}{2}\right) \\
-\frac{1}{\sqrt r}\cos \left(\frac{\theta}{2}\right)
\end{pmatrix}, \quad  R_2^s = \begin{pmatrix} -\frac{1}{r}\sin \theta \\ 0 \\ \left(\frac{2}{\sqrt{r}} - \sqrt r \right) \sin \left( \frac{\theta}{2}\right) \\
\frac{1}{\sqrt r}\sin \left(\frac{\theta}{2}\right)
\end{pmatrix}. 
\end{equation}
\end{lemma}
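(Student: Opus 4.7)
The plan is to carry out a direct eigenvalue/eigenvector computation for the constant-coefficient matrix $B_\infty(\lambda)$ and then extract a real basis for each of the two two-dimensional real invariant subspaces. The symplectic change of variables \eqref{eq: symplectic change of coords sh} turns the constant-coefficient scalar ODE $u^{(4)} + 2u'' + (1+\lambda - f'(0))u = 0$ (the asymptotic form of \eqref{eq: SH eigenvalue problem on R}) into the first-order system $q' = B_\infty q$, so the characteristic polynomial of $B_\infty$ is $(\mu^2+1)^2 - (f'(0)-\lambda) = 0$. Under Hypothesis~\ref{hyp: nonlinearity} and $\lambda \geq 0$ this forces $\mu^2 = -1 \pm i\sqrt{\lambda - f'(0)}$, with no real eigenvalues. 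Using the identities $r\cos\theta = 1$ and $r\sin\theta = -\sqrt{\lambda-f'(0)}$, which follow directly from the given definitions of $r$ and $\theta$, one rewrites $\mu^2 = r e^{\pm i(\pi+\theta)}$, extracts the four square roots, and applies the half-angle formulas to obtain the eigenvalues $\pm\sqrt{r}\sin(\theta/2) \pm i\sqrt{r}\cos(\theta/2)$. Since $\theta/2 \in (-\pi/4,0]$, two of these have strictly positive real part (unstable) and two strictly negative (stable), so each invariant eigenspace is two-dimensional over $\R$; this en route also confirms part (a) of Lemma~\ref{lemma: B infinity hyperbolic}.

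Next, solving $B_\infty v = \mu v$ row by row with the normalization $v_1 = 1$ yields the complex eigenvector $v(\mu) = (1,\mu^2,\mu^3+2\mu,\mu)^T$; verification of the third row reduces to the characteristic equation $\mu^4 + 2\mu^2 = -r^2$. Denoting by $\mu_1 = \sqrt{r}e^{i(\pi/2 + \theta/2)}$ the unstable eigenvalue in the upper half plane, the real two-dimensional unstable invariant subspace is the real span of $\operatorname{Re} v(\mu_1)$ and $\operatorname{Im} v(\mu_1)$. The vectors $R_1^u, R_2^u$ in \eqref{eq: B_inf unstable evecs} are then obtained as the particular real linear combinations of these two vectors that render the second entry equal to $1$ and $0$, respectively. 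The stable vectors $R_1^s, R_2^s$ are produced identically starting from $v(-\mu_1)$; since $v(-\mu) = (1,\mu^2,-\mu^3 - 2\mu,-\mu)^T$, only the components $v_3$ and $v_4$ acquire a sign flip, which accounts for the sign pattern distinguishing \eqref{eq: B_inf unstable evecs} from \eqref{eq: B_inf stable evecs}.

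The main technical effort is the trigonometric bookkeeping in the last step: rewriting $\operatorname{Re}(\mu^3 + 2\mu)$ and $\operatorname{Im}(\mu^3 + 2\mu)$ (each a combination of $r^{3/2}e^{i(3\pi/2+3\theta/2)}$ and $2\sqrt{r}e^{i(\pi/2 + \theta/2)}$) into the closed forms $\bigl(\pm 2/\sqrt{r} \pm \sqrt{r}\bigr)\cos(\theta/2)$ or $\sin(\theta/2)$ that appear in the statement. This is carried out by expanding via the triple-angle identities $\cos(3\theta/2) = \cos\theta\cos(\theta/2) - \sin\theta\sin(\theta/2)$ and $\sin(3\theta/2) = \sin\theta\cos(\theta/2) + \cos\theta\sin(\theta/2)$, and then reducing using $r\cos\theta = 1$, $r\sin\theta = -\sqrt{\lambda-f'(0)}$, together with the half-angle identities $2\sin^2(\theta/2) = 1-\cos\theta$ and $2\cos^2(\theta/2) = 1+\cos\theta$. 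Once these rewritings are in hand, linear independence within each pair is automatic because the second entry of $R_1^u$ (respectively $R_1^s$) is $1$ while the second entry of $R_2^u$ (respectively $R_2^s$) is $0$, so $\{R_1^u, R_2^u, R_1^s, R_2^s\}$ spans $\R^4$ and the lemma follows.
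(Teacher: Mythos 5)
Your overall strategy -- reading off the eigenvalues of $B_\infty$ from the quartic $(\mu^2+1)^2 = f'(0)-\lambda$, taking the companion-style eigenvector $v(\mu) = (1,\mu^2,\mu^3+2\mu,\mu)^T$ coming from the change of variables \eqref{eq: symplectic change of coords sh}, and extracting a real basis from $\operatorname{Re}v(\mu_1)$, $\operatorname{Im}v(\mu_1)$ -- is sound and is essentially equivalent to the paper's argument, which instead exploits the block structure $B_\infty = \left(\begin{smallmatrix} 0 & P \\ Q & 0\end{smallmatrix}\right)$, diagonalizes the $2\times 2$ matrix $PQ$, and assembles the $4$-vectors as $V_k^{u,s} = (v_k, \pm\gamma_k P^{-1}v_k)^T$. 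Either route reduces to the same computation.

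However, there is a genuine error in your choice of branch for $\theta$. You assert $r\cos\theta = 1$ and $r\sin\theta = -\sqrt{\lambda - f'(0)}$, i.e.\ you take the principal branch $\theta \in (-\pi/2,0)$, so that $re^{i\theta} = 1 - i\sqrt{\lambda-f'(0)}$. The formulas in the lemma are only correct for the branch the paper actually uses, namely $re^{i\theta} = -1 + i\sqrt{\lambda - f'(0)}$ with $\theta \in (\pi/2,\pi)$ (this is stated explicitly in the proofs of Lemma \ref{lemma: B infinity hyperbolic}(a) and Proposition \ref{prop: zero bottom}), for which $r\cos\theta = -1$ and $r\sin\theta = +\sqrt{\lambda-f'(0)}$. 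Concretely: with the correct branch one has $\mu_1^2 = re^{i\theta}$, so the vector in $\mathbb E^u$ normalized to have second entry $1$ has first entry $\operatorname{Re}(1/\mu_1^2) = \tfrac1r\cos\theta = -1/r^2$, matching $R_1^u$; with your branch the same entry is $-\tfrac1r\cos\theta = -1/r^2$ while the lemma's expression $\tfrac1r\cos\theta$ evaluates to $+1/r^2$, so the claimed vector is not in the unstable subspace. The mismatch propagates: since your $\theta$ differs from the intended one by $\pi$, the half-angle substitution swaps $\cos(\theta/2)$ and $\sin(\theta/2)$ (up to sign) in the third and fourth components, so your "trigonometric bookkeeping" cannot terminate in the stated closed forms. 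The fix is simply to define $\theta$ as the argument of $-1+i\sqrt{\lambda-f'(0)}$ (second quadrant); with that correction the rest of your outline goes through. A minor secondary point: "the linear combination whose second entry is $1$" does not by itself pin down $R_1^u$ (that condition determines only a one-parameter affine family), so you must also match the first entry before verifying the remaining two components.
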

\begin{proof}
See Section \ref{Subsec: proof of B evecs lemma}.
\end{proof}

With this technical result, we can prove that these are no intersections with $\ell(-\infty, \lambda)$ and $\ell_*^{sand}$. 
\begin{proposition}\label{prop: zero bottom}
Recall $\ell(-\infty, \lambda) = \mathbb E^u_-(- \infty, \lambda)$ is defined as the unstable spectral subspace of the matrix $B_\infty$. For $\lambda_\infty > 0$, 
$$\text{Mas} (\ell,\ell_*^{sand}; D_{1,L}) = \text{Mas}\big(\ell(-\infty,\lambda), \ell_*; \ 0 \leq \lambda \leq \lambda_\infty\big) =0.$$
\end{proposition}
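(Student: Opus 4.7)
The plan is to show that the intersection $\ell(-\infty,\lambda)\cap \ell_*^{sand}$ is trivial for every $\lambda\in[0,\lambda_\infty]$, from which the vanishing of the Maslov index follows immediately (an empty set of crossings contributes nothing to the sum in Definition \ref{def: Mas index}).

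First I would recall that $\ell(-\infty,\lambda)=\mathbb E^u_-(-\infty,\lambda)$ is, by definition, the unstable spectral subspace of $B_\infty(\lambda)$, and that Lemma \ref{lemma: evecs of B} provides the explicit basis $\{R_1^u,R_2^u\}$ in terms of $\theta=\arctan(-\sqrt{\lambda-f'(0)})$ and $r=\sqrt{1+\lambda-f'(0)}$. The reference plane $\ell_*^{sand}$ consists of vectors whose first and fourth components vanish, so a nontrivial intersection is equivalent to the existence of $(c_1,c_2)\neq(0,0)$ with
\begin{equation*}
c_1 R_1^u + c_2 R_2^u \in \ell_*^{sand}.
\end{equation*}

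Reading off components one and four from \eqref{eq: B_inf unstable evecs} and cancelling the nonzero factors $1/r$ and $1/\sqrt r$ yields the $2\times 2$ system
\begin{equation*}
\begin{pmatrix} \cos\theta & -\sin\theta \\ \cos(\theta/2) & -\sin(\theta/2)\end{pmatrix}\begin{pmatrix} c_1 \\ c_2\end{pmatrix}=\begin{pmatrix} 0 \\ 0\end{pmatrix}.
\end{equation*}
Its determinant simplifies by the angle subtraction formula:
\begin{equation*}
-\cos\theta\sin(\theta/2)+\sin\theta\cos(\theta/2)=\sin(\theta-\theta/2)=\sin(\theta/2).
\end{equation*}
So the system admits only the trivial solution exactly when $\sin(\theta/2)\neq 0$.

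It remains to verify $\sin(\theta/2)\neq 0$ for all $\lambda\in[0,\lambda_\infty]$, which is where Hypothesis \ref{hyp: nonlinearity} enters: since $f'(0)<0$, we have $\lambda-f'(0)>0$ throughout $[0,\lambda_\infty]$, so $\sqrt{\lambda-f'(0)}>0$ and therefore $\theta\in(-\pi/2,0)$, giving $\theta/2\in(-\pi/4,0)$ and hence $\sin(\theta/2)\neq 0$. Consequently $c_1=c_2=0$, the intersection is trivial, and there are no crossings on $D_{1,L}$; the Maslov index along this segment is zero. I do not anticipate any substantive obstacle here beyond bookkeeping, since the entire argument reduces to a single $2\times 2$ determinant once the basis from Lemma \ref{lemma: evecs of B} is in hand; the only subtle point is ensuring that Hypothesis \ref{hyp: nonlinearity} actually keeps $\theta$ away from $0$ over the whole $\lambda$-interval, which it does precisely because $\lambda\geq 0$ combines additively with $-f'(0)>0$.
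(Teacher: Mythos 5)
Your proof is correct and follows essentially the same route as the paper's: both reduce the question to whether a nontrivial combination $c_1R_1^u+c_2R_2^u$ can have vanishing first and fourth components, and both conclude via the same trigonometric identity that this forces $c_1=c_2=0$ (your direct computation of the $2\times 2$ determinant as $\sin(\theta/2)$ is just a slightly cleaner packaging of the paper's step of solving for $c_1$ in both equations). One small remark: the paper takes $\theta$ to be the argument of $\alpha_1=-1+i\sqrt{\lambda-f'(0)}$, so $\theta\in(\pi/2,\pi)$ rather than the principal-branch value in $(-\pi/2,0)$ that you use, but since $\sin(\theta/2)\neq 0$ on either interval this does not affect the conclusion.
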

\begin{proof} 
By way of contradiction, suppose 
$$\bigg[ \mathbb E^u_-(-\infty, \lambda) = \text{span} \{R_k^u\}_{k=1}^2 \bigg] \cap \ell_*^{sand} \neq \{0\};$$
where $R_k^u$ is given in \eqref{eq: B_inf unstable evecs}. Then there exists $c_1, c_2$ with at least one $c_i \neq 0$ such that 
\begin{align*} c_1 R_1^u + c_2 R_2^u & = c_1 \begin{pmatrix} \frac{1}{r}\cos \theta \\ 1 \\ \left(\frac{2}{\sqrt r} + \sqrt{r} \right) \cos \left(\frac{\theta}{2} \right)  \\ \frac{1}{\sqrt r} \cos \left(\frac{\theta}{2} \right)
\end{pmatrix} + c_2 \begin{pmatrix} -\frac{1}{r}\sin \theta \\ 0 \\ \left(\sqrt{r}-\frac{2}{\sqrt r} \right) \sin \left(\frac{\theta}{2} \right)  \\ -\frac{1}{\sqrt r} \sin \left(\frac{\theta}{2} \right)
\end{pmatrix}\\
& \in  \text{span} \left\{\begin{pmatrix} 0 \\ 1 \\ 0 \\ 0 
\end{pmatrix}, \begin{pmatrix} 0 \\ 0 \\ 1 \\ 0 
\end{pmatrix} \right\}.\\
\intertext{In particular, this means that }
 0 & = c_1 \cos \theta - c_2 \sin\theta \\
0 & = c_1\cos\left(\frac{\theta}{2}\right) - c_2 \sin\left(\frac{\theta}{2}\right) .
\end{align*}
Solving for $c_1$ in both equations, we obtain $$c_1 = \frac{c_2\sin\theta}{\cos\theta} = \frac{c_2\sin\left(\frac{\theta}{2}\right)}{\cos\left(\frac{\theta}{2}\right)}.$$
Note that since $\theta \in \left(\frac{\pi}{2}, \pi \right)$, the denominators in the above expression are nonzero so the quantities are defined. 

However, this implies that $\cos\theta\sin \left(\frac{\theta}{2} \right) = \sin\theta\cos \left(\frac{\theta}{2} \right) $ or $c_1 = c_2 = 0$. The first statement holds only at $\theta = 2\pi k, \ k \in \Z$ and at least one of $c_1,c_2$ must be nonzero; a contradiction. Thus,
$$ \ell(-\infty;\lambda) \cap \ell_* = \emptyset$$
for $\lambda \in [-\lambda_\infty , 0]$, so $\text{Mas}(\ell, \ell_*^{sand}; D_{1,L}) = 0$.
\end{proof}
\begin{remark}
This proof differs slightly from the analogous one in \cite{BCJ18}. In previous work addressing a system of scalar reaction diffusion equations, the eigenvectors of the matrix corresponding to $PQ$ could be assumed to be orthonormal, so their explicit characterization was not necessary. The structure of the coefficient matrix for the Swift-Hohenberg equation does not satisfy this assumption, which necessitates the detailed and explicit characterization of the eigenvectors. 
\end{remark}

\subsubsection{Monotonicity in the spectral parameter}
In order to justify Figure \ref{fig: maslov box}, we must argue that the crossings of $\ell$ and $\ell_*^{sand}$ in the parameter region corresponding to the top of the box all have the same sign. This is the monotonicity property of the Maslov index of $\ell(s, \lambda)$ on $D_{3,s}$ with respect to $\ell_*^{sand}$.

Since we previously argued that the operator $\mathcal L_s$ with $s \in (-\infty, L]$ has no eigenvalues greater than $\lambda_\infty$ (Lemma \ref{lemma: mathcal L_s evals are bdd below}), we immediately see for fixed $s$ 
$$ \text{Mas}\left(\ell(s,\lambda), \ell_*^{sand}; 0 \leq \lambda \leq \infty \right) = \text{Mas}\left(\ell(s,\lambda), \ell_*^{sand} ; 0 \leq \lambda \leq \lambda_\infty\right).$$
Therefore, we can increase $\lambda$ from $0$ to $\lambda_\infty$ and capture all of the non-negative eigenvalues.

\begin{proposition}\label{prop: lam mono} For any fixed $s \in (-\infty,L]$, the Langrangian path defined by $\ell(s,\lambda) = \mathbb E^u_-(s,\lambda)$ contributes negatively to the signed count of the Maslov index as we increase $\lambda$. In particular, 
$$ \text{Mas}(\ell, \ell_*^{sand}; D_{3,s}) = \text{Mas}\left(\ell(s,\lambda), \ell_*^{sand}; 0 \leq \lambda \leq \lambda_\infty\right) = -\sum_{0 \leq \lambda \leq \lambda_\infty} \dim \big(\ell(s,\lambda) \cap \ell_*^{sand} \big) \leq 0. $$
\end{proposition}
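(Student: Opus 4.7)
The plan is to show that the first-order crossing form $Q^{(1)}$ associated with the Lagrangian path $\lambda \mapsto \ell(s,\lambda) = \mathbb E^u_-(s,\lambda)$ is strictly negative definite on every intersection with $\ell_*^{sand}$. Once established, this makes every crossing regular and, via Theorem \ref{thm: RS main result}(c) together with Definition \ref{def: Mas index}, forces each crossing to contribute exactly $-\dim\bigl(\ell(s,\lambda_*) \cap \ell_*^{sand}\bigr)$ to the Maslov index (with the usual one-half endpoint convention), giving both the sum formula and the sign $\leq 0$.

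To carry this out, I would fix a crossing $\lambda_* \in [0,\lambda_\infty]$ and a nonzero $v \in \ell(s,\lambda_*) \cap \ell_*^{sand}$, and let $q(x;\lambda)$ be the solution of $q' = JC(x,\lambda)q$ with $q(\cdot;\lambda) \in \mathbb E^u_-(\cdot;\lambda)$ for $\lambda$ near $\lambda_*$ and $q(s;\lambda_*) = v$. By part (c) of Theorem \ref{thm: RS main result}, $Q^{(1)}(v) = \langle q(s;\lambda_*),\, J\partial_\lambda q(s;\lambda_*)\rangle$. The core computation is the Lagrange-type identity
\[
\frac{d}{dx}\langle q,\, J\partial_\lambda q\rangle = -\langle q,\, \partial_\lambda C(x,\lambda_*)\,q\rangle,
\]
which follows directly from $q' = JCq$, $\partial_x\partial_\lambda q = JC\,\partial_\lambda q + J\,\partial_\lambda C\,q$, the symmetry of $C$, and $J^TJ = I$, $J^2 = -I$. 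Integrating from $-\infty$ to $s$, and using the exponential decay of both $q(\cdot;\lambda_*)$ and $\partial_\lambda q(\cdot;\lambda_*)$ as $x\to-\infty$ (which comes from smooth dependence of the unstable bundle on $\lambda$ and the roughness of the exponential dichotomy on $\R^-$ guaranteed by Lemma \ref{lemma: B infinity hyperbolic}), yields
\[
Q^{(1)}(v) = -\int_{-\infty}^{s} \langle q,\, \partial_\lambda C\, q\rangle\,dx.
\]

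From the explicit expression of $C$ in \eqref{eq: symm C}, $\partial_\lambda C = \mathrm{diag}(1,0,0,0)$, so the integrand is $q_1(x)^2 \geq 0$. Strict positivity then follows from a short unique-continuation bootstrap using the block structure of $B(x,\lambda)$ in \eqref{eq: linearized first order}: if $q_1 \equiv 0$ on $(-\infty,s]$, then $q_4 = q_1' \equiv 0$, so $q_3' \equiv 0$ and $q_3$ is constant; then $q_2 = q_4' \equiv 0$ forces $q_3 = q_2' + 2q_4 \equiv 0$, and the constant is $0$, contradicting $q(s;\lambda_*) = v \neq 0$. Hence $Q^{(1)}(v) < 0$ on every nonzero vector in the intersection, so the crossing form is negative definite there.

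Putting it together, every crossing in $\lambda$ is regular with negative-definite crossing form, and each contributes $-\dim(\ell(s,\lambda)\cap\ell_*^{sand})$ to the Maslov index; the right endpoint $\lambda = \lambda_\infty$ carries no crossing by Lemma \ref{lemma: mathcal L_s evals are bdd below}, and any crossing at $\lambda = 0$ contributes the usual half signature, which is still nonpositive. Summing produces the displayed formula and the inequality $\leq 0$. The main hurdle I anticipate is merely bookkeeping: verifying that $\partial_\lambda q$ inherits decay as $x \to -\infty$ (standard from the roughness theorem but worth stating carefully) and making the endpoint convention consistent with the sum on the right-hand side. The algebraic heart of the argument, the Lagrange identity combined with the rank-one structure of $\partial_\lambda C$, is clean and requires no appeal to the higher-order crossing machinery of Section \ref{section: maslov index for nonregular crossings}.
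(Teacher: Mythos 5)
Your proposal is correct and follows essentially the same route as the paper: compute the first-order crossing form in $\lambda$ as $\langle q(s;\lambda_*), J\partial_\lambda q(s;\lambda_*)\rangle$, convert it via a Lagrange-type identity and decay at $x=-\infty$ into $-\int_{-\infty}^{s} q_1^2\,dx$, and rule out $q_1\equiv 0$ by the same bootstrap through the components of the first-order system. The only cosmetic difference is that you invoke the frame formula of Theorem \ref{thm: RS main result}(c) directly, while the paper passes through the graph representation with coefficient functions $c_1,c_2$ and observes that the extra terms vanish by the Lagrangian property (cf.\ Remark \ref{rmk: graphing error}); both yield the identical expression.
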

\begin{proof} Suppose that $\lambda_* \in [0,\lambda_\infty]$ is a crossing, meaning $\ell(s,\lambda_*) \cap \ell_*^{sand} \neq \{0\}$ and let $\mathcal W$ be a Lagrangian subspace such that $\ell(s, \lambda_*) \cap \mathcal W = \{0\}$. Then $\mathcal W$ is transverse to $\ell(s, \lambda)$ for $\lambda \in [\lambda_* - \epsilon, \lambda_* + \epsilon]$ for $\epsilon$ sufficiently small so we can express $\ell(\lambda, s)$ as the graph of an operator $A(\lambda): \ell(s, \lambda_*) \to \mathcal W$. In particular, if we fix $v \in \ell(s; \lambda_*) \cap \ell_*^{sand}$ then $v + A(\lambda)v \in \ell(\lambda, s)$ and $A(\lambda_*)v = 0$. Since $\ell(s, \lambda) = \mathbb E^u_-(s, \lambda)$, there are two families of solutions, $z(s,\lambda)$ and $y(s, \lambda)$ to \eqref{eq: linearized first order} such that $z(s;\lambda_*) = v$ and the pair of solutions $z(s; \lambda)$ and $y(s;\lambda)$ form a basis for $\ell(s,\lambda)$ with $\lambda$ close to $\lambda_*$. Furthermore, there exists functions $c_1,c_2: \R \to \R$ such that \eqref{eq: graph A satisfies equation} applies and for fixed $s$,
\begin{equation}\label{eq: graph spectral param}
z(s;\lambda_*) + A(\lambda)z(s; \lambda_*) = c_1(\lambda) z(s;\lambda) + c_2(\lambda) y(s;\lambda).
\end{equation}
Without loss of generality, we can assume that $c_1(\lambda_*) = 1$ and $c_2(\lambda_*) = 0$.

We show that the crossing form is negative definite, which implies that the contribution to the Maslov index at each crossing is negative. Using \eqref{eq: graph spectral param}, we can compute: 
\begin{align*} Q^{(1)}(v) & = \frac{d}{d\lambda} \omega(v, A(\lambda)v) \bigg|_{\lambda = \lambda_*} \\
& = \frac{d}{d\lambda} \omega(v, v + A(\lambda)v) \bigg|_{\lambda= \lambda_*} \\
& = \left\langle z(s; \lambda_*), J \frac{d}{d\lambda} \big[ c_1(\lambda) z(s; \lambda) + c_2(\lambda)y(s;\lambda) \big] \right\rangle \bigg|_{\lambda = \lambda_*} \\
& = (c_1)_\lambda (\lambda_*)\underbrace{\left\langle z(s; \lambda_*), J  z(s; \lambda_*) \right\rangle}_{=0 \ \text{(Lagrangian property)}} + \underbrace{c_1(\lambda_*)}_{=1}\left\langle z(s; \lambda_*), J z_\lambda(s; \lambda_*) \right\rangle \\
& \qquad \qquad + (c_2)_\lambda (\lambda_*)\underbrace{\left\langle z(s; \lambda_*), J y(s;\lambda_*) \right\rangle}_{=0 \ \text{(Langrangian property)}} + \underbrace{c_2(\lambda_*)}_{=0}\left\langle z(s; \lambda_*), Jy_\lambda(s;\lambda_*) \big] \right\rangle \\
& = \left\langle z(s; \lambda_*), J z_\lambda(s; \lambda_*) \right\rangle.
\end{align*}

First notice that if we have a solution $z(x,\lambda)$ to Equation \eqref{eq: linearized first order}, via the chain rule, the $\lambda$ derivative evaluated at $x = s$ is
\begin{equation}\label{eq: B lam}
z_{x\lambda} =  B(s,\lambda)z_\lambda +   B_\lambda(s,\lambda) z \ \text{ where } \  B_\lambda = \begin{pmatrix} 0 & 0 & 0 & 0 \\ 0 & 0 & 0 & 0 \\ -1 & 0 & 0 & 0 \\ 0 & 0 & 0 & 0 
\end{pmatrix}. 
\end{equation} 
We will use the following observation as well: 
\begin{equation}
J B_\lambda = \begin{pmatrix} 0 & I_2 \\ -I_2 & 0
\end{pmatrix} \begin{pmatrix} 0 & 0 & 0 & 0 \\ 0 & 0 & 0 & 0 \\ -1 & 0 & 0 & 0 \\ 0 & 0 & 0 & 0 
\end{pmatrix} =  \begin{pmatrix} -1 & 0 & 0 & 0 \\ 0 & 0 & 0 & 0 \\ 0 & 0 & 0 & 0 \\ 0 & 0 & 0 & 0 
\end{pmatrix}.
\end{equation}

Now we can compute: 
\begin{align*} \left\langle z(s; \lambda_*)\right. & \left., J \frac{d}{d\lambda} z(s; \lambda) \right\rangle  \bigg|_{\lambda = \lambda_*} \\
& = \big\langle z(s,\lambda_*), J z_\lambda (s,\lambda_*) \big\rangle \\
& = - \big \langle J z(s,\lambda_*), z_\lambda(s,\lambda_*) \big\rangle, \qquad (J^T = -J) \\
& = -\int_{-\infty}^s \frac{d}{dx} \big\langle Jz(x,\lambda_*), z_\lambda(x, \lambda_*) \big \rangle  \ dx\\
& = - \int_{-\infty}^s \bigg \langle J z_x(x,\lambda_*), z_\lambda(x,\lambda_*) \bigg\rangle  \ dx - \int_{-\infty}^s \big \langle Jz(x,\lambda_*), z_{x\lambda} (x,\lambda_*)  \big \rangle \ dx \\
& = -\int_{-\infty}^s \left \langle JB(x,\lambda_*) z(x,\lambda_*), z_\lambda(x,\lambda_*) \right\rangle \ dx\\
& \qquad  - \int_{-\infty}^s \left \langle Jz(x,\lambda_*), B(x,\lambda_*) z_\lambda (x,\lambda_*)  + B_\lambda z(x,\lambda_*) \right \rangle \ dx \\
& = -\int_{-\infty}^s \left \langle JB(x,\lambda_*) z(x,\lambda_*), z_\lambda(x,\lambda_*)\right\rangle \ dx\\
& \qquad - \int_{-\infty}^s \left\langle B^T(x,\lambda_*)Jz(x,\lambda_*), z_\lambda (x,\lambda_*) \right\rangle \ dx - \int_{-\infty}^s \big \langle Jz(x,\lambda_*), B_\lambda z(x,\lambda_*) \big \rangle \ dx.
\end{align*}
Collecting the first two integrals and recalling that $JB=-C$, where $C$ is a symmetric matrix given by Equation \eqref{eq: symm C}, we see:
\begin{align*}
\left\langle z(s; \lambda_*), J \frac{d}{d\lambda} z(s; \lambda) \right\rangle \bigg|_{\lambda = \lambda_*} & = \int_{-\infty}^s \bigg \langle \underbrace{\big[(JB(x,\lambda_*))^T - JB(x,\lambda_*) \big]}_{=0}z(x,\lambda_*), z_\lambda(s,\lambda) \bigg|_{\lambda = \lambda_*} \bigg \rangle \ dx \\
& \qquad - \int_{-\infty}^s \big \langle Jz(x,\lambda_*), B_\lambda z(x,\lambda_*) \big \rangle \ dx \\
& = \int_{-\infty}^s \big \langle z(x,\lambda_*), J B_\lambda z(s,\lambda) \big \rangle \ dx, \qquad (J^T=-J)\\
& = -\int_{-\infty}^s \big \langle z(x,\lambda_*), [z_1(x,\lambda_*), 0,0,0] \big \rangle \ dx, \qquad \eqref{eq: B lam}\\
& = -\int_{-\infty}^s z_1(x,\lambda_*)^2 \ dx.
\end{align*}
In order to see that this quantity cannot be $0$, we proceed by contradiction. Suppose that $z_1(x,\lambda_*) = 0$ for all $x \in (-\infty, s)$. Substituting this solution back into Equation \eqref{eq: linearized first order} yields: 
\begin{align*}\dot z_1&= z_4  = 0\\
\dot z_2 & =  z_3 - 2z_4 \\
\dot z_3 &=0 \\
\dot z_4 & =  z_2.
\end{align*}

From these, we see that if $z_1 \equiv 0$ for $x \in (-\infty, s)$, then all other components of the solution are $0$ as well, meaning $z(x,s)$ is the trivial solution. Finally, via Figure \ref{fig: maslov box}, we are increasing the spectral parameter, $\lambda$, as we traverse the top of the box. Therefore, the crossing form on this portion of the path of $\ell(s,\lambda)$ through $\Lambda(2)$ is negative definite.
\end{proof}

\begin{remark}\label{rmk: graphing error}
In \cite{BCJ18}, it is assumed that the solution $z(s; \lambda_*)$ is left invariant with respect to the operator $A(\lambda)$. In other words, $z(s; \lambda_*) + A(\lambda)z(s;\lambda_*) = z(s; \lambda)$ so that 
$$ \frac{d}{d\lambda} A(\lambda)z(s;\lambda_*) \bigg|_{\lambda = \lambda_*} = \frac{d}{d\lambda}z(s; \lambda)\bigg|_{\lambda = \lambda_*}.$$
This is not necessarily true; see Section \ref{subsec: explicit example} for a counterexample. However, the calculation performed this way yields the same result as the one with the extra terms that are included in \eqref{eq: graph spectral param}. As we will see in the next section, these extra terms become particularly important with higher order derivatives.  
\end{remark}

\subsubsection{Monotonicity in the spatial parameter}\label{subsec: monotonicity in spatial param}

Finally, we will argue that the crossing form on the left side of the box contributes positively to the Maslov index (see Figure \ref{fig: maslov box}). In order to do so, we will work with a family of solutions that decay to $0$ in backwards time and solve the linearized ODE \eqref{eq: linearized first order} on the domain $(-\infty, s]$ for fixed $\lambda$. We denote this family of solutions as $p(x,\lambda; s)$, where this notation denotes that $p(x,\lambda; s)$ is a function of $x$ and $\lambda$ but depends on $s$ as a parameter. For $x < s_i$ with $i = 1,2$, $p(x, \lambda; s_1) = p(x, \lambda; s_2)$. In other words, changing the endpoint of the domain for the solution $p$ does not effect the function value for points in both the original and new domain. 

Fix $\lambda$ and suppose that $x = s_*$ is a crossing, so that $\ell_*^{sand} \cap \mathbb E^u_-(s_*, \lambda)\neq \{0\}$. We will differentiate $p(x,\lambda; s)$ in $x$. In order to simplify this computation, we set $s = s_* + \epsilon$ for some $\epsilon > 0$ and write $p(x, \lambda; s)$ as $p(x,\lambda)$. 

We proceed in an analogous way to the previous claim. More specifically, define $\mathcal W$ to be a subspace in $\R^{2n}$ transverse to $\ell(s_*,\lambda)$. Thus, $\mathcal W$ is transverse to $\ell(s,\lambda)$ for $s \in [s_* - \epsilon, s_* + \epsilon]$ for sufficiently small $\epsilon$. Fix $v \in \ell(s_*, \lambda) \cap \ell_*^{sand}$ and define the matrix $A(s): \ell(s_*, \lambda) \to \mathcal W$ such that $v + A(s)v \in \ell(s, \lambda)$ and $A(s_*)v = 0$. Since $\ell(s, \lambda) = \mathbb E^u_-(s; \lambda)$, there exists a family of solutions $p(x;\lambda)$ on the interval $(-\infty, s_* + \epsilon]$ such that $p(s_*;\lambda) = v$. Additionally, there is another family of solutions $q(s;\lambda)$ such that $p(s;\lambda)$ and $q(s;\lambda)$ form a basis for $\ell(s, \lambda)$ on the domain $s \leq s_* + \epsilon$. Thus, there exists $d_1(s)$ and $d_2(s)$, defined for $s$ close to $s_*$ such that 
\begin{equation}\label{eq: spatial monotonicity graph formulation} 
\begin{split}
p(s_*; \lambda) + A(s)p(s_*;\lambda)  = d_1(s)p(s;\lambda) + d_2(s) q(s;\lambda), \qquad
d_1(s_*) & = 1 \\  d_2(s_*) & = 0.
\end{split}
\end{equation}

We perform a computation similar to the one done in Proposition \ref{prop: lam mono}, but we find that the first order crossing form could be zero. This will lead us to cases in which we will need to consider higher order crossing forms. We begin with a lemma regarding the first order crossing form.  

\begin{lemma} \label{lem:first-ordercrossing-form} If $v \in \ell(s_*, \lambda) \cap \ell_*^{sand}$ and $p(x, \lambda; s)$ is a solution of \eqref{eq: linearized first order} such that $v = p(s_*, \lambda; s) = (p_1(s_*, \lambda), p_2(s_*, \lambda), p_3(s_*, \lambda), p_4(s_*, \lambda))$, then 
\begin{equation}\label{E:first-order-crossing}
Q^{(1)}(v) = (p_2(s_*; \lambda))^2.
\end{equation}
\end{lemma}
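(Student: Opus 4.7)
The plan is to follow the blueprint of Proposition \ref{prop: lam mono}, but now differentiating in the spatial parameter $s$ rather than in $\lambda$. Starting from the graph representation \eqref{eq: spatial monotonicity graph formulation}, and using that $\omega(v,v) = 0$ is constant in $s$, I would write
\[
Q^{(1)}(v) \;=\; \frac{d}{ds}\omega(v, A(s)v)\Big|_{s=s_*} \;=\; \frac{d}{ds}\,\omega\bigl(v,\; d_1(s)p(s;\lambda) + d_2(s)q(s;\lambda)\bigr)\Big|_{s=s_*}.
\]
Expanding by the product rule gives four terms. Two of them, the ones carrying the derivatives $d_1'(s_*)$ and $d_2'(s_*)$, involve $\omega(v, p(s_*;\lambda))$ and $\omega(v, q(s_*;\lambda))$; since $v$, $p(s_*;\lambda)$, and $q(s_*;\lambda)$ all lie inside the Lagrangian plane $\ell(s_*,\lambda)$, these vanish. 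Of the remaining two, the one carrying $d_2(s_*) = 0$ drops out, and the one carrying $d_1(s_*) = 1$ leaves
\[
Q^{(1)}(v) \;=\; \omega\bigl(v,\; p_s(s_*;\lambda)\bigr).
\]

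Next, since $p(x;\lambda)$ is a fixed solution of \eqref{eq: linearized first order} evaluated at $x = s$, the chain rule gives $p_s(s_*;\lambda) = B(s_*,\lambda)\,p(s_*;\lambda) = B(s_*,\lambda)\,v$, so
\[
Q^{(1)}(v) \;=\; \langle v,\, J B(s_*,\lambda)\, v\rangle.
\]
The final step is a direct computation. Writing $v = (0, p_2, p_3, 0)^T$ (the form forced by $v\in\ell_*^{sand}$) and applying the explicit $B$ from \eqref{eq: linearized first order}, one finds $Bv = (0, p_3, 0, p_2)^T$, and then $JBv = (0, p_2, 0, -p_3)^T$. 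Taking the inner product with $v$ annihilates the first, third, and fourth components, leaving exactly $(p_2(s_*;\lambda))^2$, as claimed.

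No step here is really an obstacle: the whole proof is a bookkeeping exercise once one knows to cite the Lagrangian property to kill the $\omega(v,p)$ and $\omega(v,q)$ terms. The one place where care is warranted is precisely the point flagged in Remark \ref{rmk: graphing error}: one must keep the full expansion $d_1(s)p(s;\lambda) + d_2(s)q(s;\lambda)$ rather than naively setting $v + A(s)v = p(s;\lambda)$. For the first order form the $d_1',d_2'$ terms happen to vanish by the Lagrangian identity, so the naive answer coincides with the correct one; but in the next section, where $Q^{(j)}$ for $j \geq 2$ will be computed, these extra contributions are genuinely needed, so setting up the derivation carefully here also lays the right groundwork for the higher-order crossing form calculations to follow.
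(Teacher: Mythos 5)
Your proposal is correct and follows essentially the same route as the paper: differentiate $\omega(v, v+A(s)v)$ using the representation \eqref{eq: spatial monotonicity graph formulation}, kill the $d_1'$ and $d_2'$ terms via the Lagrangian property, use $d_1(s_*)=1$, $d_2(s_*)=0$ and $p_s = Bp$ to reduce to $\langle v, JBv\rangle$, and evaluate explicitly. The explicit computation of $Bv$ and $JBv$ and the final value $(p_2(s_*;\lambda))^2$ all check out.
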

\begin{proof} First we note that, since $p(s_*, \lambda) \in \ell_*^{sand}$, we have $p_1(s_*, \lambda) = p_4(s_*, \lambda) = 0$.  Using the definition of $Q^{(1)}(v)$, the Lagrangian structure, and the representation \eqref{eq: spatial monotonicity graph formulation}, we find
\begin{align*}
Q^{(1)}(v) &= \frac{d}{ds}\omega(v, A(s)v)|_{s = s_*} \\
&= \frac{d}{ds}\omega(v, v + A(s)v) \\
&= \frac{d}{ds}\langle v, J(d_1(s)p(s;\lambda) + d_2(s) q(s;\lambda))\rangle|_{s = s_*} \\
&= \langle v, J(d_1'(s)p(s;\lambda) + d_1(s)p_s(s;\lambda) + d_2'(s) q(s;\lambda)+ d_2(s) q_s(s;\lambda))\rangle|_{s = s_*} \\
&= \langle p(s_*, \lambda), Jp_s(s_*, \lambda; )\rangle \\
&= \langle p(s_*, \lambda), JBp(s_*, \lambda; )\rangle \\
&= (p_2(s_*))^2.
\end{align*}
\end{proof}
\begin{remark} Recall that the codomain of $A(s)$ is given by $\mathcal{W}$, which was discussed just above \eqref{eq: spatial monotonicity graph formulation}. The proof of the above Lemma did not involve the choice of $\mathcal{W}$ in any way. 
\end{remark}
As a result of this Lemma, whether or not the first order crossing form is degenerate will depend on whether or not $p_2(s_*)$ is nonzero. Thus, we investigate further the properties of the vectors that form a basis for $\ell(s_*, \lambda)$.
\begin{lemma}\label{lem:cases} Suppose $s_*$ is a crossing and $p(s_*; \lambda) \in \ell(s_*, \lambda) \cap \ell_*^{sand}$.  The basis solutions $p(s_*; \lambda)$ and $q(s_*; \lambda)$ for $\ell(s_*, \lambda)$ have the form
\begin{align*}
p(s_*;\lambda) & = \begin{pmatrix} 0 & p_2(s_*; \lambda)  & p_3(s_*; \lambda) & 0
\end{pmatrix} \\
q(s_*; \lambda) & = \begin{pmatrix} \alpha p_2(s_*; \lambda)   & q_2(s_*; \lambda) & q_3(s_*; \lambda) & \alpha p_3(s_*; \lambda) 
\end{pmatrix},
\end{align*}
for some $\alpha \in \R$.
\end{lemma}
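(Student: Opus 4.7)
The plan is to read off the form of $p(s_*;\lambda)$ directly from its membership in $\ell_*^{sand}$, and then to derive the form of $q(s_*;\lambda)$ from the fact that $\ell(s_*,\lambda)$ is a Lagrangian subspace.

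First, I would note that since $p(s_*;\lambda) \in \ell_*^{sand}$ and Definition \ref{def: sandwich plane} tells us $\ell_*^{sand} = \{(q_1,q_2,q_3,q_4) : q_1 = q_4 = 0\}$, the form $p(s_*;\lambda) = (0,\, p_2(s_*;\lambda),\, p_3(s_*;\lambda),\, 0)^T$ is immediate.

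For $q(s_*;\lambda)$, I would use that $\ell(s_*, \lambda)$ being Lagrangian forces any two vectors in it to be $J$-orthogonal. A direct calculation with the standard block $J$ yields
\begin{equation*}
\omega(p(s_*;\lambda),\, q(s_*;\lambda)) = p_1 q_3 + p_2 q_4 - p_3 q_1 - p_4 q_2 = p_2 q_4 - p_3 q_1,
\end{equation*}
after using $p_1 = p_4 = 0$. Setting this equal to zero gives a single linear relation $p_2 q_4 = p_3 q_1$, which forces the pair $(q_1, q_4) \in \R^2$ to be parallel to $(p_2, p_3)$. Since $p(s_*;\lambda) \neq 0$ (it is a basis vector of $\ell(s_*,\lambda)$), at least one of $p_2, p_3$ is nonzero, so there exists a scalar $\alpha \in \R$ with $q_1 = \alpha p_2$ and $q_4 = \alpha p_3$. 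The middle components $q_2, q_3$ are unconstrained by the Lagrangian condition and remain arbitrary, giving precisely the claimed form.

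I do not foresee a substantive obstacle here; the whole argument is a single application of the defining identity of a Lagrangian subspace. The only care needed is in the degenerate cases $p_2 = 0$ or $p_3 = 0$, which require a brief separate check that the relation $p_2 q_4 = p_3 q_1$ still allows the parallelism conclusion (e.g., if $p_3 = 0$ then $p_2 \neq 0$ forces $q_4 = 0 = \alpha p_3$ and we take $\alpha = q_1 / p_2$; the case $p_2 = 0$ is symmetric). These are precisely the situations in which the first-order crossing form computed in \eqref{E:first-order-crossing} vanishes, and this structural lemma is exactly what will enable the higher-order crossing analysis from Section \ref{section: maslov index for nonregular crossings} to be applied in the subsequent monotonicity argument.
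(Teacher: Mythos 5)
Your proposal is correct and follows essentially the same route as the paper: read off $p_1 = p_4 = 0$ from membership in $\ell_*^{sand}$, then use $\omega(p,q) = p_2 q_4 - p_3 q_1 = 0$ together with $p \neq 0$ to conclude $(q_1, q_4) = \alpha(p_2, p_3)$. Your explicit treatment of the cases $p_2 = 0$ or $p_3 = 0$ is a minor elaboration of the same linear-dependence argument the paper leaves implicit.
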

\begin{proof} We suppress the $\lambda$ dependence in order to simplify notation. That is we write  $p(s_*) = p(s_*; \lambda)$ and $q(s_*) = q(s_*; \lambda)$ and note that neither can be the zero vector because then the corresponding solution would be the zero solution. Since $p(s_*) \in \ell_* \cap \ell(s_*)$,   it must have the form  $p(s_*) = \begin{pmatrix} 0 & p_2(s_*) & p_3(s_*) & 0 \end{pmatrix}$. To describe $q(s_*)$, observe that via the Lagrangian property we can compute
\[
0  = p(s_*)^TJ q(s_*) =p_2(s_*) q_4(s_*) -p_3(s_*) q_1(s_*).
\]
Hence $q(s_*)$ must have the form
\begin{align*}
q(s_*; \lambda) & = \begin{pmatrix} \alpha p_2(s_*)   & q_2(s_*) & q_3(s_*) & \alpha p_3(s_*) 
\end{pmatrix}.
\end{align*}
for some for some $ \alpha \in \R$. 
\end{proof}
This Lemma leads us to consider the following three cases. 
\begin{itemize}
\item[(I)] {\bf Simple, regular crossing: $\alpha \neq 0$ and $ p_2(s_*) \neq 0$.} If $\alpha \neq 0$, then the crossing is simple. If also $ p_2(s_*) \neq 0$, then Lemma \ref{lem:first-ordercrossing-form} implies $Q^{(1)}(v) > 0$, and we have monotonicity. 
\item[(II)] {\bf Simple, nonregular crossing: $\alpha \neq 0$ and $ p_2(s_*) = 0$.} If $\alpha \neq 0$, then the crossing is simple. If also $p_2(s_*) = 0$, then Lemma \ref{lem:first-ordercrossing-form} implies $Q^{(1)}(v) = 0$, and hence the crossing is non-regular. In this case, we show in Lemma \ref{lem:simple-nonregular} below that 
\begin{align}
Q^{(1)}(p(s_*)) &=0,  &
Q^{(2)}(p(s_*)) &=0, &
Q^{(3)}(p(s_*)) &>0,
\end{align} 
and so again we have monotonicity.
\item[(III)] {\bf Nonsimple crossing: $\alpha = 0$.} If $ \alpha = 0$ then both $p(s_*, \lambda)$ and $q(s_*, \lambda)$ are in $\ell_*^{sand}$, and so the crossing is not simple. 
Below we will see that this necessarily leads to a situation in which the crossing is {\it not} fully degenerate. This will prevent the theory developed in \S\ref{section: maslov index for nonregular crossings} from applying. Hence, we must rule out this case via Hypothesis \ref{hyp:degeneracy}.
\end{itemize}
We now state the lemma that corresponds to case (II) above, and which shows that in this case we still have monotonicity. 

\begin{lemma} \label{lem:simple-nonregular} Suppose we are in case (II) above. In other words, suppose the two basis solutions for $\ell(s_*, \lambda)$ are given by
\begin{align}
p(s_*;\lambda) & = \begin{pmatrix} 0 & 0  & p_3(s_*; \lambda) & 0
\end{pmatrix} \nonumber \\
q(s_*; \lambda) & = \begin{pmatrix} 0 & q_2(s_*; \lambda) & q_3(s_*; \lambda) & q_4(s_*; \lambda) 
\end{pmatrix} \label{E:basis-vector-form-II}
\end{align}
 where $  q_4(s_*; \lambda)  \neq 0$. 
Then 
\[
Q^{(1)}(p(s_*)) =0, \qquad  Q^{(2)}(p(s_*)) =0, \qquad  Q^{(3)}(p(s_*)) >0.
\]
\end{lemma}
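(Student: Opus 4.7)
The plan is to reduce the computation of $Q^{(1)}(v), Q^{(2)}(v), Q^{(3)}(v)$ to Taylor-expanding one scalar function $\alpha(s)$, by exploiting the graph representation~\eqref{eq: spatial monotonicity graph formulation} with a carefully chosen transverse Lagrangian $\mathcal{W}$. By Theorem~\ref{thm: higher order quad form RS extension}(b), the crossing forms are independent of this choice, so we are free to pick a convenient one.

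First I would take $\mathcal{W} = \operatorname{span}(e_1, e_2)$, which is Lagrangian since $\omega(e_1,e_2)=0$. Using the explicit forms of $p(s_*;\lambda)$ and $q(s_*;\lambda)$ in \eqref{E:basis-vector-form-II} together with $p_3 := p_3(s_*;\lambda)\ne 0$ and $q_4 := q_4(s_*;\lambda)\ne 0$, a short linear-algebra check gives $\mathcal{W}\cap\ell(s_*,\lambda)=\{0\}$. With $v = p(s_*;\lambda)$, the graph representation then forces
\[
v + A(s)v = \bigl(\alpha(s),\,\beta(s),\,p_3,\,0\bigr)
\]
for smooth scalars $\alpha,\beta$ with $\alpha(s_*)=\beta(s_*)=0$. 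A one-line computation yields $\omega(v,\,v+A(s)v) = -p_3\,\alpha(s)$, and hence $Q^{(j)}(v) = -p_3\,\alpha^{(j)}(s_*)$ for every $j\ge 1$.

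Next, since $v + A(s)v \in \ell(s,\lambda) = \operatorname{span}(p(s;\lambda),\,q(s;\lambda))$, I write $v+A(s)v = c_1(s)\,p(s;\lambda) + c_2(s)\,q(s;\lambda)$. Matching the third and fourth coordinates gives a $2\times 2$ linear system, and Cramer's rule produces the closed form
\[
\alpha(s) \;=\; \frac{p_3\,N(s)}{\Delta(s)}, \qquad
N(s) := q_4(s;\lambda)p_1(s;\lambda) - p_4(s;\lambda)q_1(s;\lambda), \qquad
\Delta(s) := p_3(s;\lambda)q_4(s;\lambda) - q_3(s;\lambda)p_4(s;\lambda),
\]
with $\Delta(s_*) = p_3 q_4 \ne 0$. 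It therefore suffices to Taylor-expand $N$ at $s_*$ to third order.

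The key step is a Leibniz expansion of $N^{(k)}(s_*)$ for $k=0,1,2,3$, combined with the ODE identities $p_s = B(s,\lambda)p$ and $q_s = B(s,\lambda)q$ and the explicit form of $B$ from \eqref{eq: linearized first order}. Iterating $B$ on $v$ gives $Bv = (0, p_3, 0, 0)$, $B^2 v = (0, 0, 0, p_3)$, and $B^3 v = (p_3, -2p_3, 0, 0)$; moreover $(\partial_x^i B)v = 0$ for every $i\ge 1$ because the only nonzero column of $\partial_x^i B$ is the first and $v_1 = 0$. Combined with the initial data $p_1(s_*) = p_4(s_*) = q_1(s_*) = 0$ and the induced cascade $p_1'(s_*)=p_1''(s_*)=p_4'(s_*)=0$, these identities collapse the Leibniz sums and give $N(s_*) = N'(s_*) = N''(s_*) = 0$ together with
\[
N'''(s_*) \;=\; q_4\,p_1'''(s_*) - 3\,p_4''(s_*)\,q_1'(s_*) \;=\; q_4 p_3 - 3\, p_3 q_4 \;=\; -2\, p_3 q_4.
\]
Hence $\alpha'(s_*)=\alpha''(s_*)=0$ and $\alpha'''(s_*) = -2p_3$, leading to
\[
Q^{(1)}(v) = 0, \qquad Q^{(2)}(v) = 0, \qquad Q^{(3)}(v) = 2\,p_3(s_*;\lambda)^2 > 0,
\]
where strict positivity uses $p_3\ne 0$ because $v\ne 0$.

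The main obstacle is the careful Leibniz bookkeeping for $N'''(s_*)$: of its twelve terms, some vanish because of the initial data on $p$ and $q$, some vanish because $(\partial_x^i B)v = 0$ for $v_1 = 0$, and some vanish via the $B$-iterate cascade that propagates $p_1'(s_*)=p_1''(s_*)=p_4'(s_*)=0$. Once these three vanishing mechanisms are cleanly accounted for, the final inequality $Q^{(3)}(v) = 2p_3^2 > 0$ is immediate, and the monotonicity in the spatial parameter required by case~(II) follows.
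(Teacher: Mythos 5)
Your proof is correct and reaches the paper's value $Q^{(3)}(v)=2p_3(s_*;\lambda)^2>0$, but the computation is organized quite differently from the paper's. Both arguments choose the same transverse plane $\mathcal W=\operatorname{span}(e_1,e_2)$ and both expand $v+A(s)v$ in the basis $p(s;\lambda),q(s;\lambda)$; the paper, however, differentiates the vector identity \eqref{eq: spatial monotonicity graph formulation} directly, which forces it to establish $(d_1)_s(s_*)=(d_2)_s(s_*)=0$ by inspecting components, to handle the second-derivative coefficients via $A_{ss}$, and finally to evaluate $Q^{(3)}(v)=\langle p,J[-2BBB+B_{ss}+2B_sB-2BB_s]p\rangle$ by an explicit matrix-product computation. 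You instead solve for the coefficients in closed form by Cramer's rule on the third and fourth components (legitimate since $A(s)v\in\mathcal W$ fixes those components at $(p_3,0)$ and the relevant $2\times 2$ determinant equals $p_3q_4\neq 0$ at $s_*$), which collapses everything to the single scalar $\alpha(s)=p_3N(s)/\Delta(s)$ with $N=q_4p_1-p_4q_1$, so that $Q^{(j)}(v)=-p_3\alpha^{(j)}(s_*)$ and the three statements follow from $N$ vanishing to second order and $N'''(s_*)=-2p_3q_4$. I checked the component derivatives ($p_1'''(s_*)=p_3$, $p_4''(s_*)=p_3$, $q_1'(s_*)=q_4$, and the vanishing of the lower-order data) against the system \eqref{eq: linearized first order}, and the Leibniz bookkeeping is right. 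Your route buys a cleaner mechanism for why $Q^{(1)}$ and $Q^{(2)}$ vanish and avoids the matrix algebra; the paper's route has the advantage of producing the intermediate identities for $(d_1)_s,(d_2)_s$ explicitly (which your closed form reproduces as a byproduct) and of exhibiting the general matrix $-2BBB+B_{ss}+2B_sB-2BB_s$ that governs third-order crossings.
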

\begin{proof}
We have via Lemma \ref{lem:first-ordercrossing-form} that $Q^{(1)}(p(s_*)) = 0$. Now we compute the second order crossing form. As in the proof of Lemma \ref{lem:first-ordercrossing-form}, we again use the definition of $Q^{(2)}(v)$, the Lagrangian structure, the representation \eqref{eq: spatial monotonicity graph formulation}, and the facts that $p_s = Bp$ and $q_s = Bq$ to find
\begin{align*} Q^{(2)}\big(v\big) & = \frac{d^2}{ds^2}\omega(v, A(s)v) \bigg|_{s = s_*} \\
& = \frac{d^2}{ds^2}\omega(v, v+ A(s)v) \bigg|_{s = s_*} \\
& = (d_1)_{ss}(s_*)\underbrace{\langle p(s_*), Jp(s_*)\rangle}_{=0}  + (d_2)_{ss}(s_*) \underbrace{\langle p(s_*), Jq(s_*) \rangle}_{=0} \\
& \qquad + \langle p(s_*), Jp_{ss}(s_*) \rangle + 2 \langle p(s_*), JB(s_*)\big( (d_1)_s(s_*)p(s_*) +  (d_2)_s (s_*)q(s_*)\big) \rangle.  
\end{align*}Up to this point, our calculation has not depended on the choice of $\mathcal{W}$. Indeed, as stated in Theorem \ref{thm: higher order quad form RS extension}, since we are in case (II) the ultimate value of the higher order crossing form does not depend on the choice of $\mathcal{W}$. Thus, we are free to make a convenient choice of $\mathcal{W}$ so as to enable us to see that some of the above terms in the above expression are zero. 

To that end, let $\mathcal W = \begin{pmatrix} I_n & 0 \end{pmatrix}$. Differentiating \eqref{eq: spatial monotonicity graph formulation} with respect to $s$ and evaluating at $s_*$, we obtain 
\[
A_s(s_*) p(s_*; \lambda) = p_s(s_*; \lambda) +  (d_1)_s(s_*)p(s_*; \lambda) + (d_2)_s(s_*)q(s_*; \lambda).
\]
The fact that $A(s)p(s_*; \lambda) \in \mathcal W$ for all $s$ near $s_*$ implies $A_s(s)p(s_*; \lambda) \in \mathcal W$ for all $s$ near $s_*$, and so we write
\begin{equation}\label{E:A-rep}
A(s) p(s_*; \lambda) = \begin{pmatrix} (a_1)(s) \\ (a_2)(s) \\ 0 \\ 0 \end{pmatrix}, \qquad A_s(s) p(s_*; \lambda) = \begin{pmatrix} (a_1)_s(s) \\ (a_2)_s(s) \\ 0 \\ 0 \end{pmatrix}.
\end{equation}
Using the characterizations of $p(s_*; \lambda)$ and $q(s_*; \lambda)$ in \eqref{E:basis-vector-form-II}, as well as the facts that $p_s = Bp$ and $q_s = Bq$, we arrive at 
\begin{align*}
\begin{pmatrix} (a_1)_s(s_*) \\ (a_2)_s(s_*) \\ 0 \\ 0 
\end{pmatrix} & = \begin{pmatrix} 0 \\ p_3(s_*) \\ 0 \\ 0
\end{pmatrix} + (d_1)_s(s_*) \begin{pmatrix} 0 \\ 0\\ p_3(s_*; \lambda) \\ 0
\end{pmatrix} + (d_2)_s(s_*) \begin{pmatrix} 0 \\ q_2(s_*) \\ q_3(s_*) \\ q_4(s_*)
\end{pmatrix} \\
&  =  \begin{pmatrix} 0 \\ p_3(s_*) + (d_2)_s(s_*)q_2(s_*) \\
(d_1)_s(s_*)p_3(s_*) + (d_2)_s(s_*) q_3(s_*) \\ 
 (d_2)_s(s_*) q_4(s_*)
\end{pmatrix}.
\end{align*}
By looking at the fourth component  which is assumed to be nonzero, we conclude that $(d_2)_s(s_*) = 0$. This then implies via the third component that $(d_1)_s(s_*) = 0$ as well. Returning to our calculation of the second order crossing form, we then have
\begin{align*} Q^{(2)}\big(v\big) & = \langle p(s_*), Jp_{ss}(s_*) \rangle + 2 \langle p(s_*), JB(s_*)\big( (d_1)_s(s_*)p(s_*) +  (d_2)_s (s_*)q(s_*)\big) \rangle \\
& = \langle p(s_*), Jp_{ss}(s_*) \rangle.  
\end{align*}
We now use the fact that $p_s = Bp$ to explicitly compute this quantity. We find 
\begin{align*}
Q^{(2)}\big(v\big) & = 
\langle p(s_*), Jp_{ss}(s_*) \rangle \\
& = \langle p(s_*), J \big[B(s_*)B(s_*) + B_s(s_*)\big]p(s_*) \rangle \\ 
& = \begin{pmatrix} 0 & 0 & p_3(s_*) & 0
\end{pmatrix}\begin{pmatrix}  \frac{d}{dx} f'\circ\varphi(s_*) & 0 & 0 &  f'\circ\varphi(s_*) -\lambda - 1  \\ 
0 & 0 & 1 & -2 \\ 0 & -1 & 0 & 0 \\ -f'\circ\varphi(s_*) + \lambda +1 & 2 & 0 & 0 \end{pmatrix} \begin{pmatrix} 0 \\ 0 \\ p_3(s_*) \\ 0
\end{pmatrix} \\
& = \begin{pmatrix} 0 & 0 & p_3(s_*) & 0 
\end{pmatrix}\begin{pmatrix} 0 \\ p_3(s_*; s_*) \\ 0  \\ 0
\end{pmatrix} \\
& = 0.
\end{align*}
Finally, we determine the third order crossing form. We have
\begin{align*} Q^{(3)}\big(v\big) & = \frac{d^3}{ds^3}\omega(v, v+ A(s)v) \bigg|_{s = s_*} \\
& = \underbrace{\left\langle p(s_*), J\big((d_1)_{sss}(s_*) p(s_*) + (d_2)_{sss}q(s_*)\big) \right\rangle}_{(3a)} + 3\underbrace{\left\langle p(s_*), J \left(  (d_1)_{ss}(s_*)p_s(s_*) + (d_2)_{ss}(s_*)q_s(s_*) \right)\right\rangle}_{(3b)} \\
& \qquad + 3\underbrace{\left\langle p(s_*), J \left(  (d_1)_s(s_*)p_{ss}(s_*) + (d_2)_s(s_*)q_{ss}(s_*) \right)\right\rangle}_{(3c)} + \underbrace{\left\langle p(s_*), J \left(  d_1(s_*)p_{sss}(s_*)(s_*) + d_2(s_*)q_{sss}(s_*) \right)\right\rangle}_{(3d)}.
\end{align*}
We address each of these terms individually. The first term vanishes due to the Lagrangian property: 
\[
(3a) = (d_1)_{sss}(s_*)\left\langle p(s_*), J p(s_*) \right\rangle  + (d_2)_{sss}(s_*) \left\langle p(s_*), Jq(s_*) \right\rangle = 0.
\]
The term $(3b)$ can be simplified by using the fact that above we showed $(d_2)_s(s_*) = 0$ and $(d_1)_s(s_*) = 0$. By differentiating \eqref{eq: spatial monotonicity graph formulation} twice, we can relate the second derivatives of $d_{1,2}(s_*)$ to their first derivatives, which are both zero. In particular, we find
\[
A_{ss}(s_*)p(s_*) = d_1''(s_*)p(s_*) + p_{ss}(s_*) + d_2''(s_*) q(s_*).
\]
This allows us to write
\begin{align*}
(3b) &= 3\left\langle p(s_*), J \left(  (d_1)_{ss}(s_*)p_s(s_*) + (d_2)_{ss}(s_*)q_s(s_*) \right)\right\rangle \\
& = 3\left\langle p(s_*), J B\left(  (d_1)_{ss}(s_*)p(s_*) + (d_2)_{ss}(s_*)q(s_*) \right)\right\rangle \\
&= 3\left\langle p(s_*), J B \left( A_{ss}(s_*) - p_{ss}(s_*) \right)\right\rangle.
\end{align*}
As above, using \eqref{E:A-rep} we find 
\[
\langle (JB)^Tp(s_*), A_{ss}(s_*)p(s_*)\rangle = \begin{pmatrix} 0 & 0 & 0 & -p_3(s_*)
\end{pmatrix} \begin{pmatrix} (a_1)_{ss}(s_*) \\ (a_2)_{ss}(s_*) \\ 0 \\ 0
\end{pmatrix} = 0.
\]
Thus, 
\[
(3b) = -3 \left\langle p(s_*), J B p_{ss}(s_*) \right\rangle.
\]
Next, since it was shown above that $d_1'(s_*) = d_2'(s_*) = 0$, we find
\[
(3c) = 0.
\]
Finally, the fact that $d_2(s_*) = 0$ and $d_1(s_*) = 1$ implies
\[
(3d) = \left\langle p(s_*), J p_{sss}(s_*) \right\rangle.
\]
Putting these four pieces together and again using the equation $p_s = Bp$, we find
\begin{align*}Q^{(3)}\big(v\big) & = \left\langle p(s_*), J p_{sss}(s_*)\right\rangle - 3 \big\langle p(s_*), J B p_{ss}(s_*)\big\rangle\\
& = \langle p(s_*), J \big[BBB + B_{ss}+ 2B_sB + BB_s \big] p(s_*) \rangle - 3 \langle p(s_*), JB \big[BB + B_s \big] p(s_*) \rangle \\
& = \langle p(s_*), J \big[-2BBB + B_{ss} +2 B_sB - 2BB_s \big] p(s_*) \rangle.
\end{align*}
Using \eqref{eq: linearized first order}, we can explicitly compute the matrix in this quantity. By doing so, we find
\begin{align*}Q^{(3)}\big(v\big) 
& = \langle p(s_*), J \big[-2BBB + B_{ss} +2 B_sB - 2BB_s \big] p(s_*) \rangle \\
& = \begin{pmatrix} 0 & 0 & p_3(s_*) & 0 
\end{pmatrix} \\
& \quad \cdot \begin{pmatrix} \frac{d^2}{ds^2} f'\circ\varphi(s_*) & 2 \lambda - 2f'\circ\varphi(s_*) +  2 & 0 & 2 \frac{d}{ds}f'\circ\varphi(s_*) \\ 2\lambda - 2 f'\circ\varphi(s_*) + 2 & 4 & 0 & 0 \\ 0 & 0 & 2 & -4 \\ 
2\frac{d}{ds} f'\circ\varphi(s_*) & 0  & -4 & 2f'\circ\varphi(s_*) - 2\lambda + 6
\end{pmatrix} \begin{pmatrix}
0 \\ 0 \\ p_3(s_*) \\0
\end{pmatrix} \\
& = 2 p_3^2(s_*) \\
& > 0. 
\end{align*} 
Note that $p_3(s_*) \neq 0$ or else $p(s; s_*)$ would be the zero solution. 
\end{proof}

Finally, we   consider case (III). In this case we can assume without loss of generality that 
\begin{align*}
			p(s_*;\lambda) & = \begin{pmatrix} 0 & 0 & p_3(s_*; \lambda) & 0
			\end{pmatrix} \\
			q(s_*; \lambda) & = \begin{pmatrix} 0   & q_2(s_*; \lambda) & 0 & 0
			\end{pmatrix}.
\end{align*}
In this case, the first order crossing form is a two-by-two matrix, and we need to determine its two eigenvalues. This means we must compute $Q^{(1)}(v)$ for $v = p(s_*)$ and $v = q(s_*)$. Since $p(s_*)$ has the same form as in Lemma \ref{lem:simple-nonregular}, we again find that $Q^{(1)}(p(s_*)) = 0$. To compute  $Q^{(1)}(q(s_*))$, we modify the calculation in the proof of Lemma \ref{lem:first-ordercrossing-form} to find
\[
Q^{(1)}(q(s_*)) = \langle q(s_*, \lambda), JB q(s_*, \lambda) \rangle = (q_2(s_*; \lambda))^2 > 0. 
\]
Thus, we find that the first order crossing for is degenerate, but it is not identically zero. Thus, this case lies outside the scope of the theory developed in \S\ref{section: maslov index for nonregular crossings}. Thus, we must rule it out with the following hypothesis.
\begin{hypothesis}\label{hyp:degeneracy} The dimension of the intersection of  $\ell(s, 0)$ with the sandwich plane $\ell_*^{sand}$ is at most one.  
\end{hypothesis}
This hypothesis ensures that all crossings are simple, and of the three cases laid out after Lemma \ref{lem:cases}, only cases I and II are possible. Moreover should a degenerate crossing occur, the entire crossing form is identically zero, and hence the theory of \S\ref{section: maslov index for nonregular crossings} applies. We note that, in \S \ref{S:numerics}, we provide evidence that, for all pulses that we consider, this hypothesis is indeed satisfied.

Thus, we have proven the following monotonicity result. 

\begin{proposition}\label{prop: s mono} Suppose that Hypothesis \ref{hyp:degeneracy} is satisfied. For any fixed $\lambda \in[0,\lambda_\infty]$, the path $s \mapsto \ell(s,\lambda)$ as $s$ runs from $-\infty$ to $L$ has a positive contribution to the Maslov index. In other words, 
$$  \text{Mas}(\ell, \ell_*^{sand}; D_{2,L}) = \sum_{s \in [-\infty, L]} \dim(\ell(s,\lambda) \cap \ell_*^{sand}) \geq 0.$$
\end{proposition}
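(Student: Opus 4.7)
The plan is to show that every crossing of $\ell(s,\lambda)$ with $\ell_*^{sand}$ along $D_{2,L}$ contributes non-negatively to the Maslov index, invoking the case analysis set up after Lemma \ref{lem:cases} together with the higher-order crossing form theory developed in Section \ref{section: maslov index for nonregular crossings}. First, I would use Hypothesis \ref{hyp:degeneracy} to guarantee that at every crossing $s_* \in [-\infty, L]$, the intersection $\ell(s_*,\lambda) \cap \ell_*^{sand}$ is at most one-dimensional, which rules out case (III) and leaves only cases (I) and (II) to handle.

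Next, I would analyze each of the two remaining cases. In case (I), Lemma \ref{lem:first-ordercrossing-form} shows $Q^{(1)}(v) = (p_2(s_*;\lambda))^2 > 0$, so the first-order crossing form is positive definite on the one-dimensional subspace $\ell(s_*,\lambda) \cap \ell_*^{sand}$, and the crossing contributes $+1$ via Definition \ref{def: Mas index}. In case (II), Lemma \ref{lem:simple-nonregular} gives $Q^{(1)}(v) = Q^{(2)}(v) = 0$ identically on the one-dimensional kernel, while $Q^{(3)}(v) = 2p_3^2(s_*) > 0$. Since $j = 3$ is odd and the crossing form of order $3$ is non-degenerate, Theorem \ref{theorem: maslov gen tang}(ii) applies and yields a contribution of $\mathrm{sign}\, Q^{(3)} = +1$.

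To combine these contributions over the whole interval $[-\infty, L]$, I would establish that crossings are isolated and hence, by compactness of the compactified interval, only finitely many occur. Isolation follows at each crossing from the fact that the relevant crossing form (first order in case (I), third order in case (II)) is non-degenerate on the one-dimensional intersection, combined with Lemma \ref{lemma: deng/jones general ops} applied to $B(s) = \Pi J A(s) \Pi$. Propositions \ref{prop: zero left} and \ref{prop: zero bottom} already exclude crossings at the endpoint $s = -\infty$, so no accumulation can occur there either. Any crossing that happens to occur at the endpoint $s = L$ is handled by the half-contribution $\tfrac{1}{2}\,\mathrm{sign}\,Q^{(j)}$ from Theorem \ref{theorem: maslov gen tang}, which is still non-negative. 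Invoking additivity of the Maslov index (Lemma \ref{lemma: homotopic to regular}) and summing yields
\[
\mathrm{Mas}(\ell,\ell_*^{sand}; D_{2,L}) \;=\; \sum_{s\in[-\infty,L]} \dim\!\bigl(\ell(s,\lambda)\cap \ell_*^{sand}\bigr) \;\geq\; 0,
\]
since each crossing contributes $+1$ (or $+\tfrac{1}{2}$ at $s=L$), matching the dimension of the intersection by Hypothesis \ref{hyp:degeneracy}.

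The main obstacle is verifying that the higher-order theory of Section \ref{section: maslov index for nonregular crossings} genuinely applies in case (II): one needs the first- and second-order crossing forms to vanish \emph{identically} on the kernel $\ell(s_*,\lambda)\cap \ell_*^{sand}$, not merely to be singular. Under Hypothesis \ref{hyp:degeneracy} this kernel is one-dimensional, so Lemma \ref{lem:simple-nonregular} exactly supplies the identical vanishing on the full kernel required by Theorem \ref{theorem: maslov gen tang}. A secondary subtlety is the independence of the higher-order calculations from the choice of transverse plane $\mathcal{W}$, which Theorem \ref{thm: higher order quad form RS extension} guarantees and which was exploited in the proof of Lemma \ref{lem:simple-nonregular} to set $\mathcal{W} = \mathrm{colspan}\begin{pmatrix} I_n & 0 \end{pmatrix}^T$ and extract $d_1'(s_*) = d_2'(s_*) = 0$.
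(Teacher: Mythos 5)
Your proposal is correct and follows essentially the same route as the paper: the proof of Proposition \ref{prop: s mono} in the text is precisely the case analysis of \S\ref{subsec: monotonicity in spatial param}, using Hypothesis \ref{hyp:degeneracy} to exclude case (III), Lemma \ref{lem:first-ordercrossing-form} for case (I), Lemma \ref{lem:simple-nonregular} together with Theorem \ref{theorem: maslov gen tang} for case (II), and the $\mathcal{W}$-independence from Theorem \ref{thm: higher order quad form RS extension}. Your added remarks on isolation/finiteness of crossings and the half-contribution at $s=L$ supply detail the paper leaves implicit, but do not change the argument.
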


Finally, we prove Theorem \ref{thm: morse index half line} given at the beginning of this section. \\

\noindent \textbf{Theorem \ref{thm: morse index half line}.} \textit{Suppose that Hypothesis \ref{hyp:degeneracy} is satisfied. The number of positive eigenvalues of $\mathcal L_L$ is equal to the number of conjugate points in $(-\infty, L)$ counted with multiplicities. }

\begin{proof}
Via homotopy invariance and Proposition \ref{prop: zero bottom} and Proposition \ref{prop: zero left}, we know that 
$$ \text{Mas}(\ell,\ell_*^{sand}; D_L) = \sum_{i=1}^4 \text{Mas}(\ell, \ell_*^{sand};D_{i,L}) = \text{Mas}(\ell, \ell_*^{sand}; D_{2,L}) +\text{Mas}(\ell, \ell_*^{sand}; D_{3,L}) =  0.$$
By the construction of the eigenvalue problem in Equation \eqref{eq: half line eval prob BC}, we see that all crossings on $D_{3,L}$ correspond to non-positive eigenvalues. Thus, if we define $\text{Mor} (\mathcal L_L)$ to be the number of strictly positive eigenvalues of $\mathcal L_L$, then
$$ \text{Mas} (\ell, \ell_*^{sand};D_{3,L}) = \text{Mor}(\mathcal L_L) + \dim\ker \mathcal L_L.$$
Lastly, if there was a crossing at $(0,L)$, this crossing would contribute to both $\text{Mas}(\ell, \ell_*^{sand};D_{2,L})$ and $\text{Mas}(\ell, \ell_*^{sand};D_{3,L})$. By Lemma \ref{lemma: conj pt eval corr}, this can only happen if $0$ is an eigenvalue of $\mathcal L_L$ and $L$ is a conjugate point. Therefore, the number of nonnegative eigenvalues of $\mathcal L_L$ is equal to the number of conjugate points in $(-\infty, L)$ counting multiplicity. 
\end{proof}

\subsection{\texorpdfstring{Extension to $\R$}{Extension to R}}\label{sec: extension to R}
In general, we do not expect the spectrum of an operator on a truncated domain and an unbounded domain to be the same. In order to argue that the number of positive eigenvalues of the operator $\mathcal L$ acting on $(-\infty, L]$, which we called $\mathcal L_L$, coincides with the number of positive eigenvalues of $\mathcal L$ acting on $\R$, we can rely on the results of \cite{BCJ18, SS00}. We note that some results on point spectra of truncated operators include Sections 2.5 and 4 of \cite{Daners} and \ADD{\cite{Kreiss1999StabilityOT}}. 

\begin{lemma}\label{lemma: eigenvalues vary} Let $0 \leq \lambda_1(L) \leq \lambda_2(L) \leq \dots \leq \lambda_n(L)$ be the non-negative eigenvalues for the operator $\mathcal L_L$. Each $\lambda_j(L)$ is
\begin{enumerate}[i)]
\item a continuously varying function of $L$; 
\item strictly increasing with $L$.
\end{enumerate}
\end{lemma}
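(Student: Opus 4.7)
The plan is to handle continuity (part i) via an analytic determinant characterization of the eigenvalues, and monotonicity (part ii) via direct differentiation of the eigenvalue equation in $L$, augmented by the crossing-form theory of \S\ref{section: maslov index for nonregular crossings}.

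For part (i), $\lambda$ is an eigenvalue of $\mathcal L_L$ precisely when $\mathbb E^u_-(L; \lambda) \cap \ell_*^{sand} \neq \{0\}$, which is equivalent to the vanishing of a $2\times 2$ determinant $F(L, \lambda)$ built from an analytic frame $(z^{(1)}, z^{(2)})$ of the decaying subspace evaluated at $x = L$ (using the $q_1$ and $q_4$ components in the symplectic coordinates \eqref{eq: symplectic change of coords sh}). Since the coefficient matrix in \eqref{eq: linearized first order} is analytic in $\lambda$ and $B_\infty(\lambda)$ is hyperbolic on $[0, \lambda_\infty]$ by Lemma \ref{lemma: B infinity hyperbolic}, the frame can be chosen real-analytically in $(L, \lambda)$, so $F$ is real-analytic. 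Proposition \ref{prop: lam mono} guarantees $\partial F / \partial\lambda \neq 0$ at each crossing, and the implicit function theorem yields a real-analytic branch $\lambda_j(L)$ through each eigenvalue, establishing continuity.

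For part (ii), fix a simple branch and let $\psi(x;L)$ denote its $L^2((-\infty,L))$-normalized eigenfunction. Differentiating $\mathcal L \psi = \lambda(L)\psi$ in $L$ gives $(\mathcal L - \lambda)\psi_L = \lambda'(L)\psi$. Pairing with $\psi$ in $L^2$ and integrating by parts shifts the operator onto $\psi$; the interior integrals cancel by $\mathcal L \psi = \lambda \psi$, and only boundary terms at $x = L$ survive. Differentiating the boundary conditions $\psi(L;L) = \psi_x(L;L) = 0$ yields $\psi_L(L;L) = 0$ and $\psi_{xL}(L;L) = -\psi_{xx}(L;L)$, and differentiating $\|\psi\|^2 = 1$ yields $\langle \psi_L, \psi\rangle = 0$. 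After these substitutions the boundary contributions from the $-\partial_x^4 - 2\partial_x^2$ terms collapse to a single surviving piece, giving
\[
\lambda'(L) \;=\; \psi_{xx}(L;L)^2 \;\geq\; 0.
\]
For strict monotonicity, suppose $\lambda'(L_0) = 0$. Then $\psi_{xx}(L_0; L_0) = 0$, which combined with $\psi(L_0;L_0) = \psi_x(L_0;L_0) = 0$ identifies $L_0$ as a Case II (simple but non-regular) crossing in the sense of Lemma \ref{lem:cases}. By Lemma \ref{lem:simple-nonregular} the third-order crossing form $Q^{(3)}$ is strictly positive at $L_0$, so Theorem \ref{theorem: maslov gen tang} shows the crossing is isolated. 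Moreover $\lambda_j(L)$ cannot be constant on any nontrivial interval: if $\lambda_j(L) \equiv \lambda_0$ on $[L_1, L_2]$, then $F(\cdot, \lambda_0)$ would vanish on that interval and, being real-analytic in $L$, would vanish identically, forcing the associated element of $\mathbb E^u_-(\cdot;\lambda_0)$ to be the zero solution, a contradiction. Thus $\lambda_j$ is a real-analytic, nondecreasing function that is not constant on any interval, hence strictly increasing.

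The main obstacle is the degenerate Case II crossings, at which the direct first-derivative calculation yields only $\lambda'(L_0) = 0$. Upgrading this to strict monotonicity is what forces the use of the higher-order crossing-form machinery of \S\ref{section: maslov index for nonregular crossings} (specifically Lemma \ref{lem:simple-nonregular} and Theorem \ref{theorem: maslov gen tang}), together with Hypothesis \ref{hyp:degeneracy} to rule out Case III (nonsimple) crossings where the sign of $\lambda'(L)$ would no longer be controlled by a single squared quantity and the framework would break down.
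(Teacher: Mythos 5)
Your argument is correct in its main thrust but follows a genuinely different route from the paper, whose proof of this lemma is essentially a pair of citations: part (i) is dispatched by noting that the essential spectrum is strictly negative and invoking \cite{kato}, and part (ii) is deferred to \cite{BCJ18}[Remark 3.5]. Your Hadamard-type computation --- differentiating the eigenvalue equation in $L$ and using the clamped conditions $\psi(L;L)=\psi_x(L;L)=0$ together with $\langle\psi,\psi_L\rangle=0$ to obtain $\lambda'(L)=\psi_{xx}(L;L)^2$ --- is exactly the fourth-order analogue of that remark, and it is correct. More importantly, you have put your finger on the one place where the analogy with \cite{BCJ18} is not ``virtually identical'': in the second-order setting the vanishing of the eigenfunction and its derivative at $x=L$ forces $\psi\equiv 0$, so the corresponding formula is automatically strictly positive, whereas here $\psi_{xx}(L;L)$ can vanish without killing $\psi$; in the coordinates \eqref{eq: symplectic change of coords sh} this is precisely the Case II configuration of Lemma \ref{lem:cases}. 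Routing the strictness through the third-order crossing form of Lemma \ref{lem:simple-nonregular} is the right idea and makes explicit a step the paper leaves implicit.

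Two points need repair. First, in part (i) your implicit-function-theorem step requires $\partial F/\partial\lambda\neq 0$, and Proposition \ref{prop: lam mono} only delivers this when the crossing in $\lambda$ is simple: Hypothesis \ref{hyp:degeneracy} constrains the intersection only on the $\lambda=0$ slice, so at a multiple eigenvalue of $\mathcal L_L$ the determinant $F(L,\cdot)$ vanishes to higher order in $\lambda$ and $\partial F/\partial\lambda=0$ even though the crossing form is negative definite. Continuity of the ordered eigenvalues survives (this is the content of the paper's appeal to \cite{kato}), but the claimed real-analyticity of the ordered branches does not, so you should not lean on it. Second, your argument that $\lambda_j$ cannot be constant on an interval is not valid as stated: $F(\cdot,\lambda_0)\equiv 0$ on $[L_1,L_2]$ means only that $\mathbb E^u_-(x;\lambda_0)\cap\ell_*^{sand}\neq\{0\}$ for every such $x$, and the intersection vector may rotate with $x$, so no single nonzero solution is forced to lie in $\ell_*^{sand}$ identically. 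The correct justification is that crossings in $s$ are isolated because the lowest nonvanishing crossing form (first order in Case I, third order in Case II) is nondegenerate, so by Lemma \ref{lemma: deng/jones general ops} the relevant eigenvalue of the graph matrix moves off zero; note that Theorem \ref{theorem: maslov gen tang} assumes isolatedness rather than proving it. With isolatedness in hand, a nondecreasing function that is not constant on any subinterval is strictly increasing, and no analyticity is needed.
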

\begin{proof}~
\begin{enumerate}[i)]
\item Since the essential spectrum of $\mathcal L_L$ is strictly negative, the eigenvalues of $\mathcal L_L$ can be thought of as the eigenvalues of a continuous family of finite dimensional operators parameterized by $L$. See for example, \cite{kato}[Section II.5] or \cite{Daners}[Sections 2.5, 4]. 
\item The proof \ADD{of} this statement is virtually identical to that of \cite{BCJ18}[Remark 3.5].
\end{enumerate}
\end{proof}

Finally, we have that Theorem \ref{thm: morse index half line} can be extended to $\R$. 

\begin{theorem}\label{thm: extension to R}
There exists $L_\infty \in \R$ such that for all $L > L_\infty$ 
the number of positive eigenvalues of the restricted operator $\mathcal L_L$ (Equation \ref{eq: half line eval prob BC}) coincides with that of the operator $\mathcal L$ on $\R$. Additionally, if we denote the number of positive eigenvalues of a differential operator $\mathcal H$ as $\text{Mor}(\mathcal H)$, then 
\begin{enumerate}[a)]
\item $L$ is not a conjugate point and $\mathcal L_L$ is invertible; 
\item the number of conjugate points of $\mathcal L_L$ is finite and independent of $L$, meaning 
$$ \text{Mor}(\mathcal L) = \#\text{ conjugate points in }(-\infty, + \infty). $$
\end{enumerate}
\end{theorem}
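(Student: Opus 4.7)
The plan is to combine the half-line result Theorem \ref{thm: morse index half line} with the monotonicity of the eigenvalues $\lambda_j(L)$ provided by Lemma \ref{lemma: eigenvalues vary}, together with a convergence statement relating the spectrum of the truncated operator $\mathcal L_L$ to that of $\mathcal L$ on $\R$. By Hypothesis \ref{hyp: nonlinearity} the essential spectrum of $\mathcal L$ on $\R$ lies strictly in the open left half plane, so $\mathcal L$ has only finitely many non-negative eigenvalues; denote by $N = \mathrm{Mor}(\mathcal L)$ the number of strictly positive ones (and assume, as is generic, that $0 \notin \sigma_{pt}(\mathcal L)$; the argument goes through in the general case by excluding a single value of $L$). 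Since Hypothesis \ref{hyp: nonlinearity} also persists on the truncated domain, the same reasoning bounds the number of non-negative eigenvalues of $\mathcal L_L$ uniformly in $L$.

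I would first show that the eigenvalue branches converge: for each $j$, the strictly increasing continuous function $L \mapsto \lambda_j(L)$ has a limit $\lambda_j^\infty \in \R \cup \{+\infty\}$ as $L \to \infty$, and this limit equals the $j$-th non-negative eigenvalue of $\mathcal L$ on $\R$ whenever such an eigenvalue exists (and is $+\infty$ otherwise). The mechanism is the standard one built on exponential dichotomies for $B(x,\lambda)$ on $\R^\pm$ guaranteed by Hypothesis \ref{hyp: nonlinearity}; the Evans function framework of \cite{SS00} converts intersections $\mathbb E^u_-(L;\lambda)\cap \ell_*^{sand}$ at the truncated endpoint into approximations of the zeros of the full-line Evans function, with error exponentially small in $L$. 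This is the technical heart of the argument and is where I expect the main obstacle; however, it reduces to a direct application of the convergence results already developed in \cite{BCJ18, SS00} after verifying that our choice $\ell_*^{sand}$ is transverse to $\mathbb E^s_+(L;\lambda)$ in the appropriate regime.

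Next, because each $\lambda_j(L)$ is strictly increasing in $L$, the set $\{L : 0 \in \sigma(\mathcal L_L)\}$ is discrete. Combining this with the limit statement above, I would choose $L_\infty$ large enough that for every $L > L_\infty$: (i) exactly $N$ of the branches $\lambda_j(L)$ are positive and none equals zero, and (ii) no new branch crosses zero in $(L_\infty,\infty)$. Item (i) establishes part (a) of the theorem: $0$ is not an eigenvalue of $\mathcal L_L$, so $\mathcal L_L$ is invertible, and by Lemma \ref{lemma: conj pt eval corr} the endpoint $L$ is not a conjugate point.

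Finally, Theorem \ref{thm: morse index half line} identifies the number of conjugate points in $(-\infty, L)$ with $\mathrm{Mor}(\mathcal L_L) + \dim\ker \mathcal L_L$. By part (a) the kernel contribution vanishes, so the count equals $\mathrm{Mor}(\mathcal L_L) = N = \mathrm{Mor}(\mathcal L)$. Since no eigenvalue of $\mathcal L_L$ crosses zero for $L > L_\infty$ and no new conjugate point appears (a new conjugate point would require an eigenvalue to cross zero, again by Lemma \ref{lemma: conj pt eval corr} applied to the moving endpoint), the total number of conjugate points in $(-\infty,L)$ is finite, independent of $L$, and equals $\mathrm{Mor}(\mathcal L)$, yielding part (b).
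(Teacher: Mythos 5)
Your plan follows essentially the same route as the paper, whose proof of this theorem is simply the observation that the argument rests on Lemma \ref{lemma: eigenvalues vary} and is virtually identical to Theorem 4.1 of \cite{BCJ18}: you invoke the same ingredients (strict monotonicity and continuity of the branches $\lambda_j(L)$, convergence of the truncated spectrum to the full-line spectrum via the exponential-dichotomy/Evans-function machinery of \cite{SS00,BCJ18}, and then Theorem \ref{thm: morse index half line}). One small correction: the assumption that $0 \notin \sigma_{pt}(\mathcal L)$ is not generic here but always false, since translation invariance gives $\mathcal L \varphi' = 0$ with $\varphi' \in H^4(\R)$; the fix is not to exclude a value of $L$ but to note that a strictly increasing branch $\lambda_j(L)$ converging to the full-line eigenvalue $0$ must satisfy $\lambda_j(L) < 0$ for every finite $L$, so it never produces a zero eigenvalue of $\mathcal L_L$ nor a conjugate point at the endpoint --- which is exactly how \cite{BCJ18} disposes of the translational mode.
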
 
\begin{proof} 
The argument relies on Lemma \ref{lemma: eigenvalues vary} and is virtually identical to \cite{BCJ18}[Theorem 4.1].
\end{proof}



\section{Numerical Computation of Conjugate Points}\label{S:numerics}
Here, we describe how we numerically compute the conjugate points for three symmetric pulse solutions to the Swift-Hohenberg equation and compare the number of conjugate points to the number of unstable eigenvalues computed experimentally using Fourier spectral methods. 

Recalling the vector field parameters $\nu$ and $\mu$ from \eqref{eq: f - nonlinearity of SH}, we consider two specific pairs of parameters $(\nu, \mu)$. In the first, we follow \cite{BKLSWSnakes09,burkeknobloch06}, and set $\nu = 1.6$ and $\mu = 0.05$. These parameter values lie in the non-snaking region and there are two branches of symmetric pulse solutions. In the second set of parameter values, we set $\nu = 1.6$ and $\mu = 0.20$. This lies within an interesting parameter region in which the system displays homoclinic snaking. For these parameter values, there are infinitely many symmetric pulse solutions. 

Using perturbation theory, it was shown in \cite{burkeknobloch06} that the two branches of the symmetric pulse solutions can be approximated via 
\begin{equation}\label{eq: BK normal form}
u_{\phi}(x; \nu, \mu) = 2\sqrt{\frac{2\mu}{\gamma}}\text{sech}\left(\frac{x\sqrt\mu}{2}\right)\cos(x+\phi), \qquad \gamma = \frac{38\nu^2}{9}-3, \qquad \phi = 0, \pi.
\end{equation}
We will refer to stationary symmetric pulse solutions to \eqref{eq: swift hohenberg} as $\varphi_\phi(x; \nu, \mu)$, where $\phi$ denotes the phase condition from the approximation $u_\phi$. It was found via numerical experiments that in the non-snaking region, the $\phi = \pi$ branch has $2$ unstable eigenvalues and the $\phi = 0$ branch has $1$ unstable eigenvalue \cite{burkeknobloch06}. We corroborate these findings and show that the pulse solution in the snaking region has $0$ unstable eigenvalues and is therefore spectrally stable. \ADD{We note that these computations of eigenvalue counts are not mathematical proofs as such, and in future work we plan validate this via computer-assisted proof; see Section \ref{S:future} for more discussion.}
\subsection{Eigenvalues via Fourier Spectral Methods}
We reproduce results found in \cite{burkeknobloch06} using spectral methods similar to those found in \cite{ACLSswifthohenbergHex}. We will write the normal form approximation $u_{\phi}$ for a pulse solution on the interval from $[-L_f,L_f]$ in its Fourier expansion, up to order $N$, and formulate the accompanying system of ODEs. We can then perform Newton's method to refine the Fourier coefficients to obtain a better approximation of the pulse solution. 

We begin by writing the pulse in its Fourier expansion,
\begin{equation}
\varphi(x) = \sum_{k = -\infty}^\infty a_k e^{i\pi kx/L_f}.
\end{equation}
Truncating this expansion at order $N$ gives the truncated approximation for the pulse solution 
\begin{equation}\label{eq: truncated fourier expansion for pulse}
\varphi^{(N)}(x) = \sum_{k = -N}^N a_k e^{i\pi kx/L_f}.
\end{equation}
Define $a = (a_{-N}, \dots, a_N)$ to be the vector of Fourier coefficients. By plugging \eqref{eq: truncated fourier expansion for pulse} into \eqref{eq: swift hohenberg}, we obtain the $2N + 1$ dimensional system of ODEs given by the operator $F:  \R^{2N+1} \to \R^{2N+1}$, whose zeros correspond to the Fourier coefficients of a solution to  \eqref{eq: swift hohenberg}. For $k = -N, \dots, N$, the $k$th component of $F$ is given via
\begin{equation}\label{eq: k component of F fourier}
F_k(a) =   \left[-\mu -\left(1-\frac{k^2\pi^2}{L_f^2} \right)^2 \right]a_k - (a*a*a)_k + \nu(a*a)_k.
\end{equation}
The operation $*$ is defined by 
$$ (a*a)_k  = \sum_{\substack{k_1 + k_2 = k \\ k_i \in \Z}} a_{k_1}a_{k_2} \ \text{ and } (a*a*a)_k = \sum_{\substack{k_1 + k_2 + k_3 = k \\ k_i \in \Z}} a_{k_1}a_{k_2}a_{k_3}.$$

Our goal is to use the Fourier coefficients of $u_\phi$ as the initial condition and run Newton's method on $F(a)$ to obtain an approximation of the $N$th order truncated pulse solution $\varphi^{(N)}_\phi(x)$. However, due to translational symmetry, $DF(a)$ has a zero eigenvalue, so we cannot compute Newton's method on the Fourier coefficients for the entire pulse profile on $[-L_f, L_f]$. To get around this, we perform Newton's method on half of the pulse profile on $[0, L_f]$ and refine the coefficients $(a_0, \dots, a_N)$. Therefore, when performing Newton's method, we use an $N+1$ dimensional system of ODEs with the $k$th component given by \eqref{eq: k component of F fourier} for $k = 0, \dots, N+1$. Finally, since $\varphi$ is symmetric, it is the case that $a_{k} = a_{-k}$, which allows us to recover the approximation for the entire pulse.

By using the Fourier coefficients of $u_\phi(x)$ as the initial condition, we can run Newton's method to obtain the approximation of the $N$th order truncated pulse solution $\varphi^{(N)}_\phi(x)$ on the interval $[-L_f, L_f] = [-100, 100]$. We refer to the numerical approximation of the truncated pulse solution as $\bar \varphi^{(N)}_\phi(x)$. The approximations are given in Figure \ref{fig: approx  pulse solutions}. \ADD{We note that for $\mu = 0.2$, we are far from the regime where the normal form gives a good approximation, and we use $3u_\phi(x)$ as the initial seed for Newton's method.}

\begin{figure}[H]
    \begin{subfigure}[b]{\textwidth}
     \centering
     \begin{subfigure}[b]{0.49\textwidth}
         \centering
         \includegraphics[width=\textwidth]{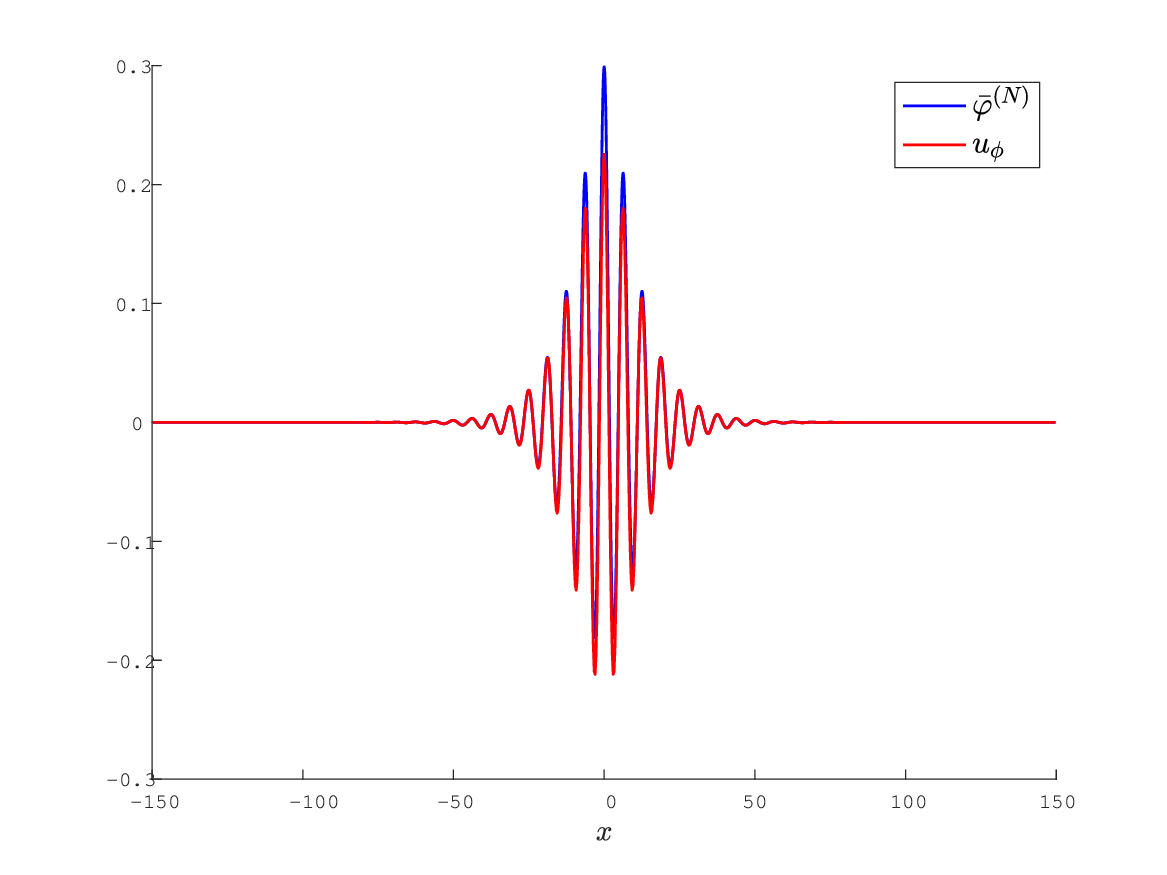}
         \subcaption{$\phi = 0$, $\nu = 1.6$ and $\mu = 0.05$}
         \label{fig:three sin x}
     \end{subfigure}
     \hfill 
     \begin{subfigure}[b]{0.49\textwidth}
         \centering
         \includegraphics[width=\textwidth]{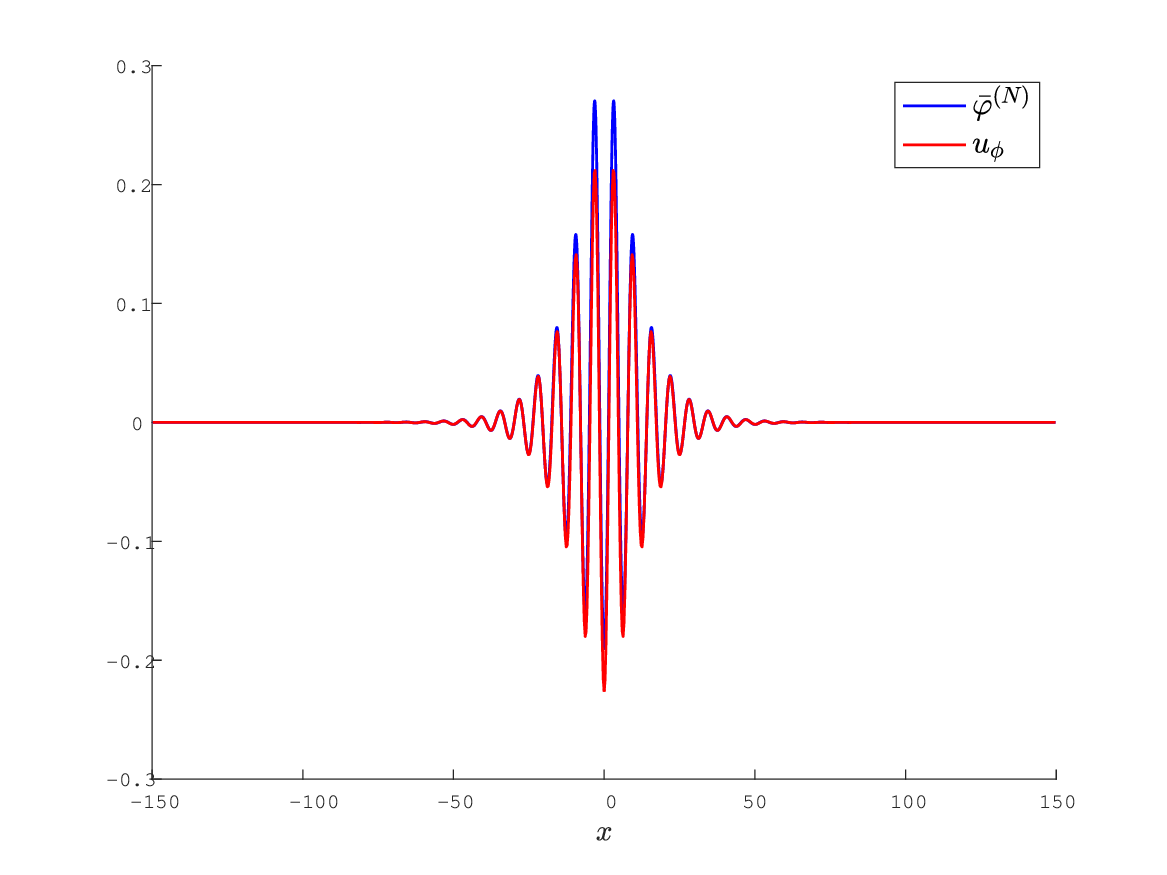}
         \subcaption{$\phi = \pi$, $\nu = 1.6$ and $\mu = 0.05$}
         \label{fig:y equals x}
     \end{subfigure}
        \label{fig: approx  pulse solutions}
    \end{subfigure}
    \centering
    \begin{subfigure}[b]{0.49\textwidth}
        \centering
         \includegraphics[width=\textwidth]{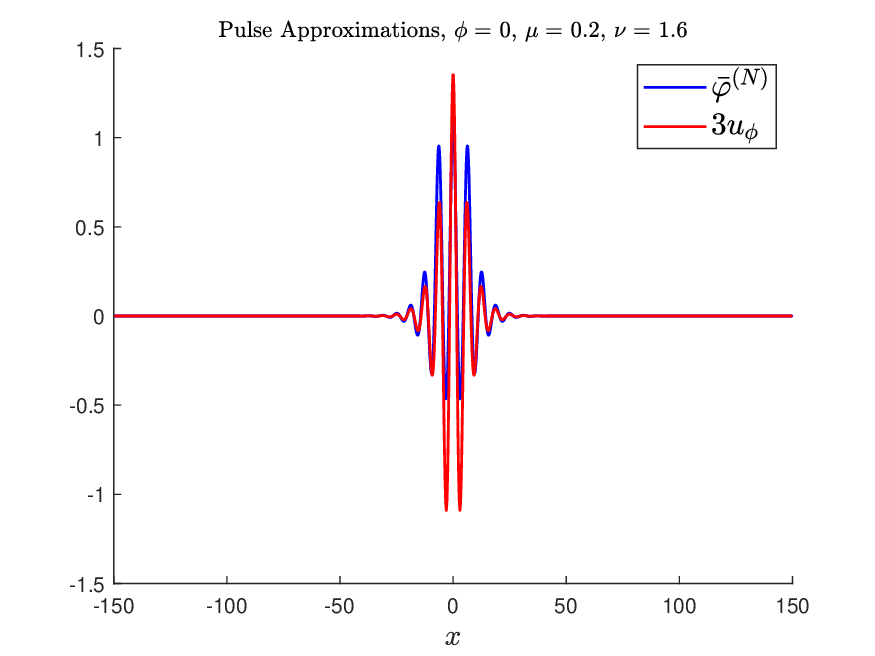}
         \caption{$\phi = 0$, $\nu = 1.6$ and $\mu = 0.2$.}
    \end{subfigure}
    \caption{Approximate symmetric pulse profiles using the normal form equation in \eqref{eq: BK normal form} and after using Newton's method on the Fourier coefficients.   }\label{fig: approx  pulse solutions}
\end{figure}

Denote the vector of approximate Fourier coefficients obtained via Newton's method for  a pulse solution as $\bar a$. One can estimate the spectrum of the associated pulse solution by computing the eigenvalues of the matrix $DF(\bar a)$. The eigenvalues for our example pulse solutions are given in Table \ref{table: unstable eigenvalues}. For the parameters $\nu = 1.6$ and $\mu = 0.05$, these results are consistent with the results found in \cite{burkeknobloch06}. 

\begin{table}[H]
\begin{center}
\begin{tabular}{||c c||} 
 \hline
 Symmetric Pulse  & Unstable Eigenvalues \Tstrut \\ 
 $\varphi_\phi(x; \nu, \mu)$ & \Bstrut \\
 \hline
 \hline
 $\varphi_0(x; 1.6, 0.05)$ & 0.1209 \Tstrut\Bstrut \\
 \hline
 $\varphi_\pi(x; 1.6, 0.05)$ &   0.0058 \Tstrut \\
 &  0.1179 \Bstrut \\
 \hline
 $\varphi_0(x; 1.6, 0.2)$ & \text{N/A} \Tstrut\Bstrut \\
 \hline
 \hline
\end{tabular}
\end{center}
 \caption{Approximate unstable eigenvalues for the three symmetric pulse solutions via Fourier methods. }\label{table: unstable eigenvalues}
\end{table}
\subsection{Eigenvalues via Conjugate Points}

We now compute the number of conjugate points for the two symmetric pulse solutions. We have shown there exists  $L_+ > 0$ such that the conjugate points are all contained in the interval $(-\infty, L_+]$. In this work, our numerics serve as a proof-of-concept, so it is sufficient to choose $L_{cp}$ large via experimentation and compute the conjugate points on the interval from $[-L_{cp},L_{cp}]$. 

In order to compute numerical approximations for the conjugate points associated to each pulse solution, we must first approximate the unstable subspace, $\E^{u, \phi}_-(x; 0)$ on the interval from $[-L,L]$. The basis vectors for $\E^{u, \phi}_-(x;0)$ solve the nonautonomous system given by \eqref{eq: SH eigenvalue problem on R}. We treat this as an autonomous problem by simultaneously computing the pulse $\varphi$ so that we can use \texttt{ode45}
 in Matlab to integrate. After computing an approximation for the unstable subspace, we can use the following result to detect conjugate points. 

\begin{lemma}\label{lemma: intersection det = 0} Denote the frame matrix of $\E^u_-(x;0)$ as 
$$\begin{pmatrix} M_1(x)  \\ M_2(x) \\ M_3(x)
\end{pmatrix}, \qquad M_1(x), M_3(x) \in \R^{1\times2}, M_2(x) \in \R^{2 \times 2}$$
and recall the reference plane $\ell_*^{sand}$ given in Definition \ref{def: sandwich plane}. Then $\E^u_-(x;0) \cap \ell_*^{sand} \neq \{0\}$ if and only if $\det \begin{pmatrix} M_1(x) \\ M_3(x)
\end{pmatrix} = 0$. 
\end{lemma}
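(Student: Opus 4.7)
The plan is to reduce the intersection condition to a homogeneous linear system and then invoke the standard fact that a square system has a nontrivial solution if and only if its determinant vanishes.

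First, I would unpack what it means for a vector to lie in $\mathbb{E}^u_-(x;0)$. Since the given frame matrix is $4 \times 2$, every element of $\mathbb{E}^u_-(x;0)$ has the form
\[
v(c) = \begin{pmatrix} M_1(x) \\ M_2(x) \\ M_3(x) \end{pmatrix} c, \qquad c \in \mathbb{R}^2,
\]
and because the columns of the frame matrix are linearly independent, $v(c) = 0$ iff $c = 0$. Next, recalling from Definition \ref{def: sandwich plane} that $\ell_*^{sand} = \{(q_1, q_2, q_3, q_4)^T \in \mathbb{R}^4 : q_1 = q_4 = 0\}$, the vector $v(c)$ lies in $\ell_*^{sand}$ precisely when its first and fourth components vanish, i.e.\ when $M_1(x) c = 0$ and $M_3(x) c = 0$. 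Stacking these two $1 \times 2$ row conditions gives the single $2 \times 2$ linear system
\[
\begin{pmatrix} M_1(x) \\ M_3(x) \end{pmatrix} c = 0.
\]

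Then I would observe that a nontrivial intersection $\mathbb{E}^u_-(x;0) \cap \ell_*^{sand} \neq \{0\}$ is equivalent to the existence of a nonzero $c \in \mathbb{R}^2$ satisfying the above system, which by basic linear algebra holds if and only if the square matrix $\begin{pmatrix} M_1(x) \\ M_3(x) \end{pmatrix}$ has nontrivial kernel, i.e.\ if and only if its determinant is zero. This gives the desired equivalence.

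The argument is routine linear algebra and I do not anticipate any obstacle; the only subtlety is keeping straight that $M_1$ and $M_3$ are row blocks (each $1 \times 2$), so that stacking them yields a $2 \times 2$ matrix whose determinant is well-defined. The identification of the kernel of this stacked matrix with the intersection of the two Lagrangian planes is the essence of the computation.
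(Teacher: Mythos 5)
Your proposal is correct and follows essentially the same argument as the paper: both reduce membership of a frame-matrix combination $v(c)$ in $\ell_*^{sand}$ to the vanishing of its first and fourth components, yielding the $2\times 2$ system $\begin{pmatrix} M_1(x) \\ M_3(x) \end{pmatrix} c = 0$, and both use the full rank of the Lagrangian frame to ensure a nonzero $c$ produces a nonzero intersection vector. No gaps.
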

\begin{proof} First, suppose that $\E^u_-(x) \cap \ell_*^{sand} \neq \{0\}$. Then by definition, there is a nonzero vector $v = (v_1,v_2)$ such that 
$$ \begin{pmatrix} M_1(x) \\ M_2(x) \\ M_3(x)
\end{pmatrix} \begin{pmatrix} v_1 \\ v_2
\end{pmatrix} = \begin{pmatrix} 0 \\ \tilde v_2 \\ \tilde v_3 \\ 0
\end{pmatrix}, \qquad \qquad \tilde v_2, \tilde v_3 \in \R .$$
However, this means $\begin{pmatrix} M_1(x) \\ M_3(x) \end{pmatrix}v = 0$. Thus, $\ker \begin{pmatrix} M_1(x) \\ M_3(x)
\end{pmatrix}$ is nontrivial so $\det \begin{pmatrix} M_1(x) \\ M_3(x)
\end{pmatrix} = 0$. 

Now suppose that $\det \begin{pmatrix} M_1(x) \\ M_3(x) \end{pmatrix} = 0$. Then there is a vector $v$ such that $\begin{pmatrix} M_1(x) \\ M_3(x)
\end{pmatrix}v = 0$. Since $\E^u_-(x)$ is a Lagrangian subspace, its frame matrix has full rank, so 
$$ \begin{pmatrix} M_1(x)  \\ M_2(x) \\ M_3(x)
\end{pmatrix} \begin{pmatrix} v_1 \\ v_2
\end{pmatrix} = \begin{pmatrix} 0 \\ \tilde v_2 \\ \tilde v_3 \\ 0
\end{pmatrix}, $$
with at least one of $\tilde v_2, \tilde v_3$ nonzero. Therefore, there is a nonzero vector in the intersection of $\mathbb E^u_-(x)$ and $\ell_*^{sand}$.
\end{proof}

\ADD{\begin{remark}
We note that, since we fully compute the frame for the unstable subspace, we could use this frame to determine the spectral flow associated with this path of Lagrangian frames \cite{furutani}. This would allow us to generalize our method to instances where monotonicity is not guaranteed. However, since we have shown the crossings are monotonic, we do not need to take this approach here.
\end{remark}}

Thus, we calculate the determinant of the submatrix formed from the first and fourth rows of the frame matrix for $\E^{u, \phi}_-(x; 0)$, calculate the zeros corresponding to conjugate points. We fix $L_{cp} = 60$ and compute 
$$\E^{u,\phi}_{-}(x;0) = \begin{pmatrix} | & | \\ W^{-,u}_1 & W^{-,u}_2 \\ | & |
\end{pmatrix}.$$
Define
$$ \det A(x) = \det\begin{pmatrix} (W^{-,u}_1)_1 & (W^{-,u}_2)_1 \\ (W^{-,u}_1)_4 & (W^{-,u}_2)_4
\end{pmatrix}.$$
Via Lemma \ref{lemma: intersection det = 0}, the zeros of $\det A(x)$ correspond to conjugate points. 
  To verify that each of the conjugate points correspond to a simple crossing, we check that $ \| A(x_*))\| > 10^{-3}$ at each of the conjugate points $x_*$. 

First, we look at the pulse solution with parameter values $\nu = 1.6$, $\mu = 0.05$ and phase condition $\phi = 0$. This solution lies outside of the snaking region and we denote it via $\varphi_0(x; 1.6, 0.05)$. Via Fourier methods, we find one unstable eigenvalue (Figure \ref{fig:unstable pulse phi = 0 evals}). When plotting the determinant of $A(x)$ associated to this pulse, we see that there is one zero (Figure \ref{fig:unstable pulse phi = 0 det}).

\begin{figure}[H]
     \centering
     \begin{subfigure}[b]{.65\textwidth}
         \centering
         \includegraphics[width=\textwidth]{figures/0_pulse_intro.eps}
         \caption{Approximate pulse profile.}
      \label{fig:unstable pulse phi = 0 approx}
     \end{subfigure}
     \hfill
\begin{subfigure}[b]{.3\textwidth}

\begin{tabular}{||c c||}
 \hline
 Unstable & Conjugate \Tstrut \\
 Eigenvalues ($\lambda$)  & Points ($x$) \Bstrut \\
 \hline
 \hline
 0.1209 & 1.2400 \Tstrut\Bstrut \\
 \hline
 \hline
\end{tabular}
\caption{Unstable eigenvalues and conjugate points.}\label{fig:unstable pulse phi = 0 evals}
     \end{subfigure}
         \begin{subfigure}[b]{\textwidth}
         \centering
          \includegraphics[width=\textwidth]{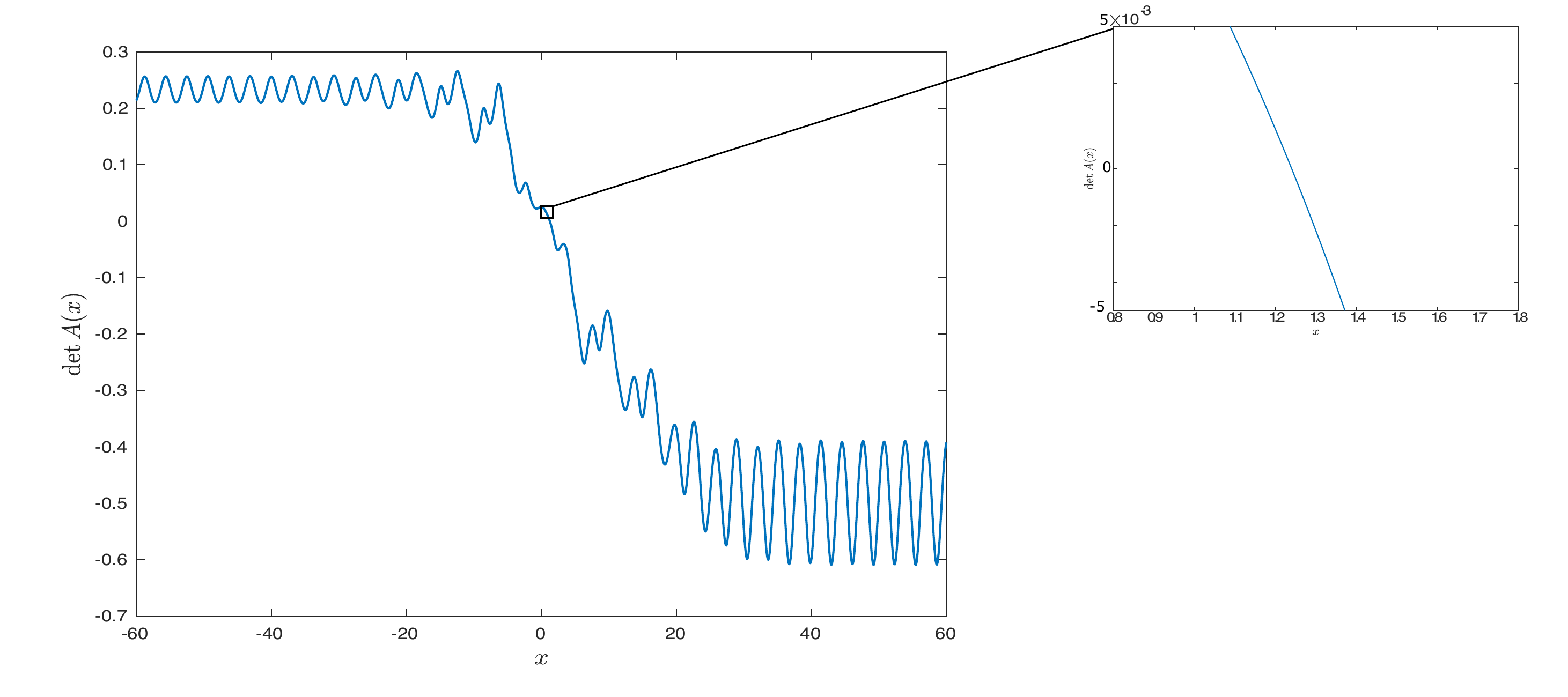}
         \caption{$\det A(x)$ and the zero at $10^{-3}$ scale.}
         \label{fig:unstable pulse phi = 0 det}
     \end{subfigure}
        \caption{Information for the pulse with $\phi = 0$, $\mu = 0.05$ and $\nu = 1.6$: $\varphi_0(x; 1.6, 0.05)$}
        \label{fig:unstable pulse phi = 0}
\end{figure}

Second, we look at the other symmetric pulse with parameter values $\nu = 1.6$ and $\mu = 0.05$ with phase condition $\phi = \pi$. We denote this pulse as $\varphi_\pi(x; 1.6, 0.05)$. Via Fourier spectral methods, we find two unstable eigenvalues (Figure \ref{fig:unstable pulse phi = pi evals}). By inspecting the plot of $\det A(x)$, we see that there are two zeros (Figure \ref{fig:unstable pulse phi = pi det}). 

\begin{figure}[H]
     \centering
     \begin{subfigure}[b]{.65\textwidth}
         \centering
         \includegraphics[width=\textwidth]{figures/pi_pulse_intro.eps}
         \caption{Approximate pulse profile.}
      \label{fig:unstable pulse phi = pi approx}
     \end{subfigure}
     \hfill
\begin{subfigure}[b]{.3\textwidth}
\begin{center}
\begin{tabular}{||c c||} 
 \hline
 Unstable & Conjugate \Tstrut  \\
 Eigenvalues ($\lambda$)  &  Points ($x$) \Bstrut \\ 
 \hline
 \hline
0.0058 &  -0.6310 \Tstrut  \\
 0.1179   & 17.5887 \Bstrut \\
 \hline
 \hline
\end{tabular}
\end{center}
\caption{Unstable eigenvalues and conjugate points.}\label{fig:unstable pulse phi = pi evals}
     \end{subfigure}
         \begin{subfigure}[b]{\textwidth}
         \centering
          \includegraphics[width=\textwidth]{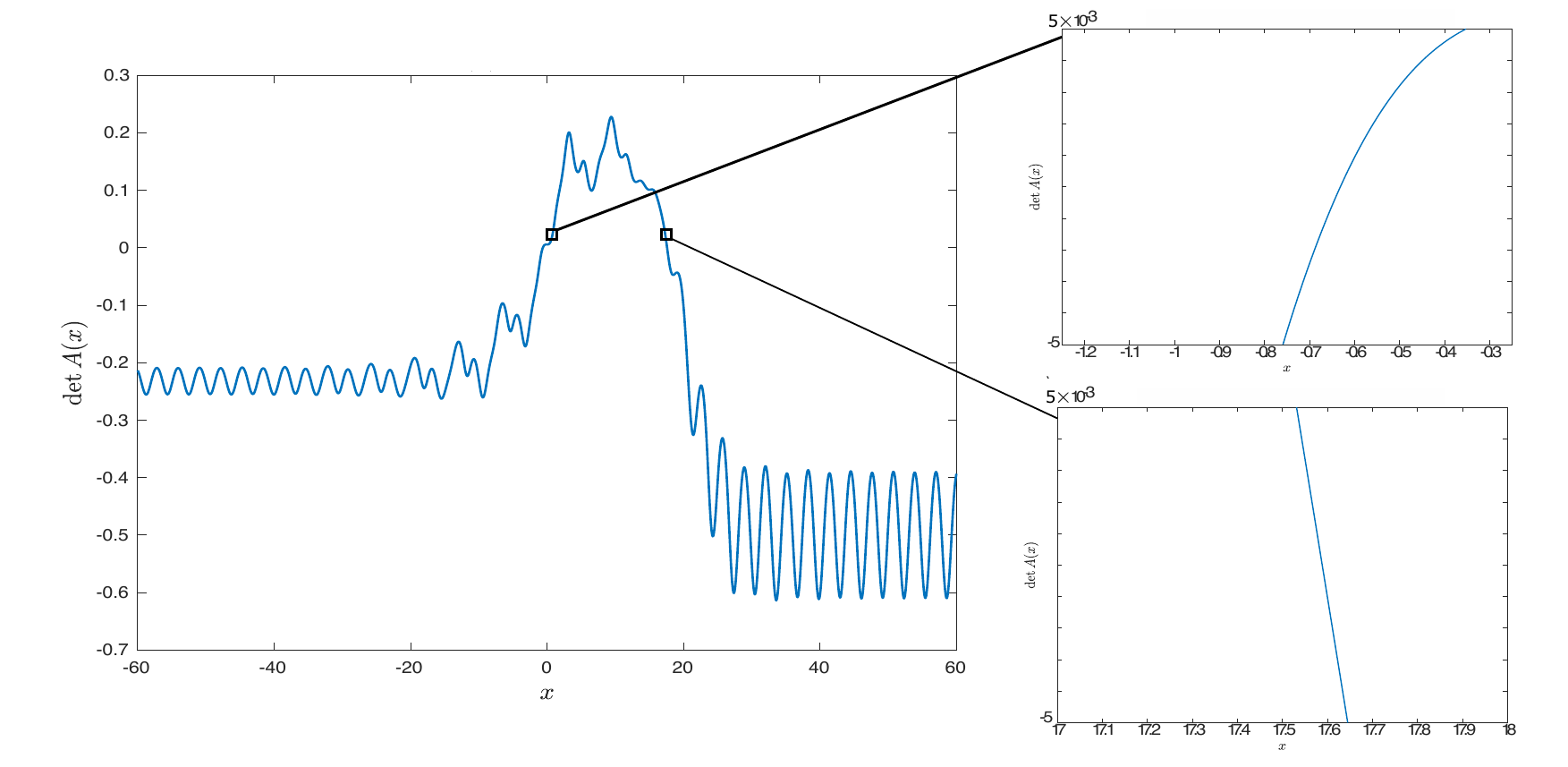}
         \caption{$\det A(x)$ and the zeros at $10^{-3}$ scale.}
         \label{fig:unstable pulse phi = pi det}
     \end{subfigure}
        \caption{Information for the pulse with $\phi = \pi$, $\mu = 0.05$ and $\nu = 1.6$: $\varphi_\pi(x; 1.6, 0.05)$}
        \label{fig: unstable pulse phi = pi}
\end{figure}
Taken together, the information about $\varphi_\phi(x; 1.6, 0.05)$, with $\phi = 0, \pi$ contained in Figures \ref{fig:unstable pulse phi = 0} and \ref{fig: unstable pulse phi = pi} are consistent with results found in \cite{burkeknobloch06} and allow us to conclude that these two pulses are spectrally unstable. Since spectral stability in this case is necessary for linear and nonlinear stability, we see that these two pulses are linear/nonlinearly unstable as well. 

Finally, we consider the symmetric pulse with parameter values $\nu = 1.6$, $\mu = .20$ and phase condition $\phi = 0$. This pulse lies within the parameter region that supports homoclinic snaking and is one of infinitely many asymmetric pulses for these parameter values. By performing Fourier spectral methods, we find no unstable eigenvalues. Additionally, when we plot $\det A(x)$, we see that there are no zeros. This information can be found in Figure \ref{fig: stable pulse}.

\begin{figure}[H]
     \centering
     \begin{subfigure}[b]{.65\textwidth}
         \centering
         \includegraphics[width=\textwidth]{figures/stable_pulse_intro.eps}
         \caption{Approximate pulse profile.}
      \label{fig:stable pulse approx}
     \end{subfigure}
     \hfill
\begin{subfigure}[b]{.3\textwidth}
\begin{center}
\begin{tabular}{||c c||} 
 \hline
 Unstable & Conjugate \Tstrut \\
Eigenvalues ($\lambda$)  & Points ($x$) \Bstrut \\ 
 \hline
 \hline
 N/A & N/A \Tstrut\Bstrut \\ 
 \hline
 \hline
\end{tabular}
\end{center}
\label{fig:stable pulse phi = 0 evals}
\caption{Unstable eigenvalues and conjugate points.}
     \end{subfigure}
         \begin{subfigure}[b]{\textwidth}
         \centering
          \includegraphics[width=\textwidth]{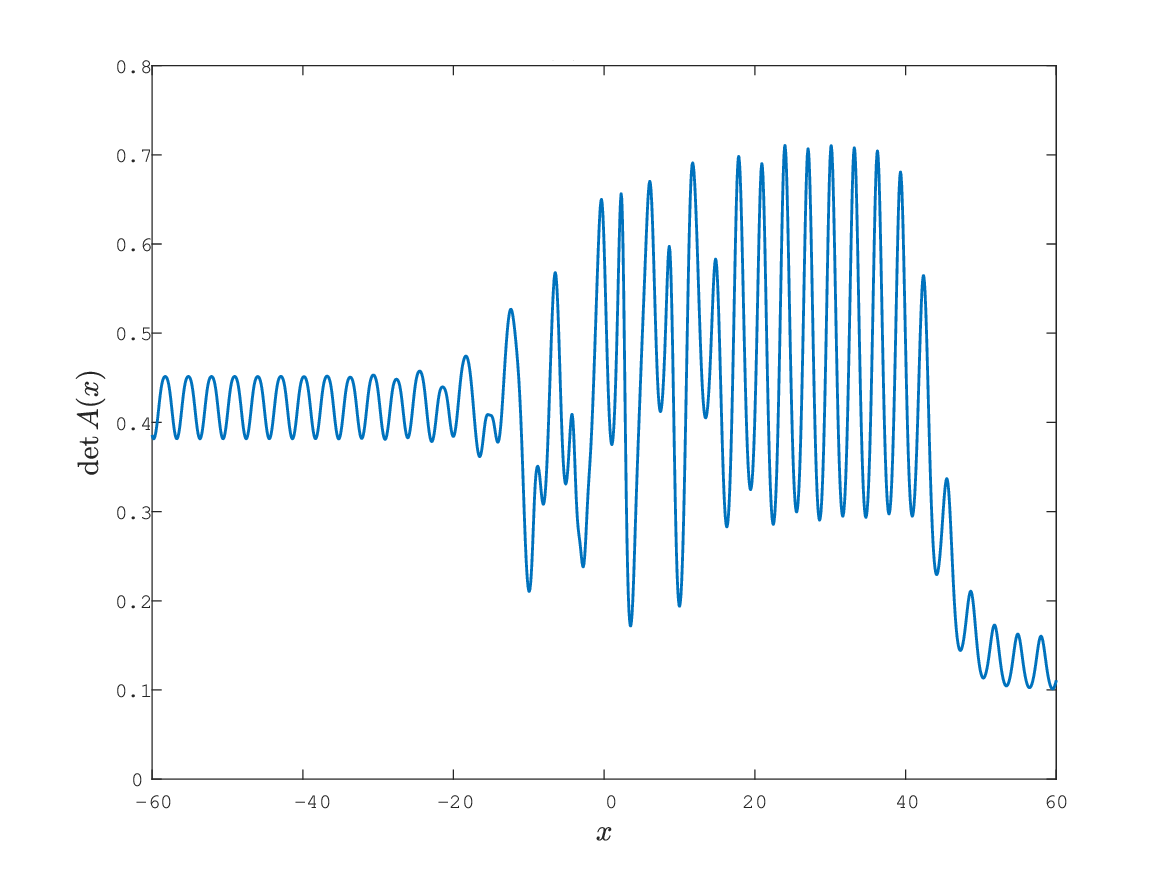}
         \caption{$\det A(x)$.}
         \label{fig:stable pulse det}
     \end{subfigure}
        \caption{Information for the pulse with $\phi = 0$, $\mu = 0.20$ and $\nu = 1.6$: $\varphi_0(x; 1.6, .20)$.  }
        \label{fig: stable pulse}
\end{figure}

\begin{remark} The behavior at the tails of these plots are determined by the asymptotic coefficient matrix $B_\infty$ given in \eqref{eq: B_inf matrix}. Since $B_\infty$ has complex conjugate eigenvalues, we observe periodic behavior. Indeed, the trajectory $\mathbb{E}_{-}^u(x)$ may be seen to be be a heteroclinic orbit within the space of Grassmanians, limiting to the point $ \mathbb{E}_{-}^u(-\infty)$ as $x \to -\infty$, and limiting to a periodic orbit as $ x \to + \infty$. 
\end{remark}

Another visualization of intersections \ADD{between} the unstable subspace and the sandwich plane can be done via the Pl\"ucker coordinates. Given two vectors in $\R^4$,  $v_1 = \sum_{j = 1}^4 a_j e_j$ and $v_2 = \sum_{j = 1}^4 b_je_j$, the wedge product $v_1 \wedge v_2$ is given by 
$$ v_1 \wedge v_2 = \sum_{1 \leq i <j \leq 4} (a_ib_j - a_jb_i)(e_i \wedge e_j).$$
 The wedge product defines an alternating tensor product of vectors (i.e. $ e_i \wedge e_j = - e_j \wedge e_i$) and the $k^{th}$ exterior product of $\R^n$  is commonly denoted by  $\bigwedge^k(\R^n) $ and has dimension $n \choose k$. 

	One defines the Pl\"ucker embedding of a Grassmannian by identifying the vectors spanning a subspace with their projectivized wedge product. For the case of, say, 2-dimensional subspaces of $\R^4$, that is 
	\[
	span \{ v_1,v_2\} \mapsto [ v_1 \wedge v_2]
	\]
	where $ [ v_1 \wedge v_2]$ denotes the equivalence class $ \alpha ( v_1 \wedge v_2)$ for all $ \alpha \in \R$. 

\begin{definition} 
		Consider the 2-dimensional subspace $ W \subseteq \R^4$ spanned by the vectors $v_1 = \sum_{j = 1}^4 a_j e_j$ and $v_2 = \sum_{j = 1}^4 b_je_j$. Given the vectors $v_1$ and $v_2$, for all $ 1 \leq i < j \leq 4$ we define their unnormalized Pl\"ucker coordinate $\tilde{P}_{ij} $, and their normalized Pl\"ucker coordinates $P_{ij} $  as:
\begin{align}
\tilde{P}_{ij} &= a_ib_j - a_jb_i,&
P_{ij} &= \frac{\tilde{P}_{ij}}{\|\tilde{P}\|}.
\end{align}
where $ \|\tilde{P}\|^2  = \sum_{1 \leq i <j \leq 4} \tilde{P}_{ij}^2$.
\end{definition}  
\noindent 
Note that if $ \{v_1,v_2\}$ and $ \{w_1, w_2\}$ span the same subspace and have Pl\"ucker coordinates $ P_{ij}$ and $Q_{ij}$ respectively, then either $ P_{ij} = Q_{ij}$ or  $ P_{ij} =- Q_{ij}$ for all $ 1 \leq i < j \leq 4$.

The Pl\"ucker coordinates provides us with a convenient coordinate system to  plot the trajectory of $\mathbb E^u_-(x)$ through $\Lambda(2)$ for the three pulse solutions we have considered thus far.   Note that the  Lagrangian Grassmannian  $\Lambda(2)$ is 3 dimensional and $ \bigwedge^2(\R^4) $  is 6 dimensional, so some amount of information will be discarded \cite{Lee}. In Figures \ref{fig:L_pm intro results unstable} and \ref{fig:L_pm intro results stable} we plot the first three Pl\"ucker coordinates $P_{12},P_{13},P_{14}$ of the trajectory of $\mathbb E^u_-(x)$. 

  To depict the train of $\ell_*^{sand}$, suppose that $ \ell  \cap \ell_*^{sand} \neq \{0\}$ and $ \ell = span \{ v_1,v_2\} $ with $v_1 \in \ell_*^{sand}$.
  Without loss of generality (due to the Lagrangian property) we may write 
  \begin{align} \label{eq:PluckerSpanningVectors}
  	v_1 &= \begin{pmatrix}
  		0\\
  		\cos \theta \\
  		\sin \theta \\
  		0
  	\end{pmatrix}
  	&
  	v_2 &= \begin{pmatrix}
  		\alpha \cos \theta \\
  		-\beta  	\sin \theta \\
  		\beta \cos  \theta \\
  		\alpha 	\sin \theta 
  	\end{pmatrix}
  \end{align}
for $ \alpha ,\beta , \theta \in \R$, 
and their wedge product may be computed to be 
\[
v_1\wedge v_2 =   \alpha  \left( -    \cos^2\theta\,  e_1 \wedge e_2 
+   \sin \theta \cos \theta  ( e_2  \wedge e_4 - e_1 \wedge e_3)+   \sin^2 \theta  e_3 \wedge e_4 \right) + \beta  
e_2 \wedge e_3.
\]
Hence their P\"ucker coordinates may be computed to be  
$$(P_{12}, P_{13}, P_{14},P_{23},P_{24},P_{34}) =
\frac{\alpha}{\sqrt{\alpha^2 + \beta^2 }} (
-    \cos^2\theta,
-\sin \theta \cos \theta,
0,
\frac{\beta}{\alpha},
\sin \theta \cos \theta,
\sin^2 \theta ).$$
We note that $ \ell(x)$ has a non-simple crossing with $\ell_*^{sand}$ at $ x_*$ if and only if $ \ell(x_*) = \ell_*^{sand}$. In this case  $\ell(x_*)$ is spanned by vectors $ v_1,v_2$ as in \eqref{eq:PluckerSpanningVectors} with $ \alpha =0$, and its Pl\"ucker coordinates are given by 
\[
(P_{12}, P_{13}, P_{14},P_{23},P_{24},P_{34}) =
(
0,0,0,\pm1,0,0).
\]

In Figures \ref{fig:L_pm intro results unstable}-\ref{fig:L_pm intro results stable} we display $\mathbb E^u_-(x)$ and the train of  $\ell_*^{sand}$  projected into the first three Pl\"ucker coordinates. After projecting into the first three coordinates and applying the double angle formula, we see that the projection of the train of $\ell_*^{sand}$ is given by  
\begin{align*}
	(P_{12}, P_{13}, P_{14} ) &=
	\frac{-\alpha}{2\sqrt{\alpha^2 + \beta^2 }} (
	1+\cos 2 \theta  ,
	\sin 2 \theta ,
	0 )  
\end{align*}
for $ \alpha , \beta ,\theta \in \R$. 
This may be succinctly described as the   region    $(x,y,0)$ for which $ (x\pm \frac{1}{2})^2 +y^2 \leq \frac{1}{4}$.

For the parameters $\mu = 0.05$ and $\nu = 1.6$, we plot the pulse solution in standard coordinates and the trajectory of the unstable subspace in these Plu\"cker coordinates. The train of $\ell_*^{sand}$ is depicted in purple. For the below two solutions, the trajectories intersect the sandwich plane a number of times that is consistent with the number of zeros in $\det A(x)$. This is not surprising, since these figures are a different visualization of the same analysis presented in Figures \ref{fig:unstable pulse phi = 0}, \ref{fig: unstable pulse phi = pi} and \ref{fig: stable pulse}. 
Note here that the sandwich plane, and the point of a potentially non-simple crossing, is projected onto the point $(0,0,0)$. In all cases the trajectory can be seen to have only simple crossings, as its Pl\"ucker coordinates remain bounded away from the origin.

\begin{figure}[H]
     \centering

    \begin{subfigure}[b]{\textwidth}
        \centering
        \begin{subfigure}[b]{0.39\textwidth}
        \centering
        \includegraphics[width=\textwidth]{figures/0_pulse_intro.eps}
        \end{subfigure}
         \begin{subfigure}[b]{0.6\textwidth}
         \centering
         \includegraphics[width=\textwidth]{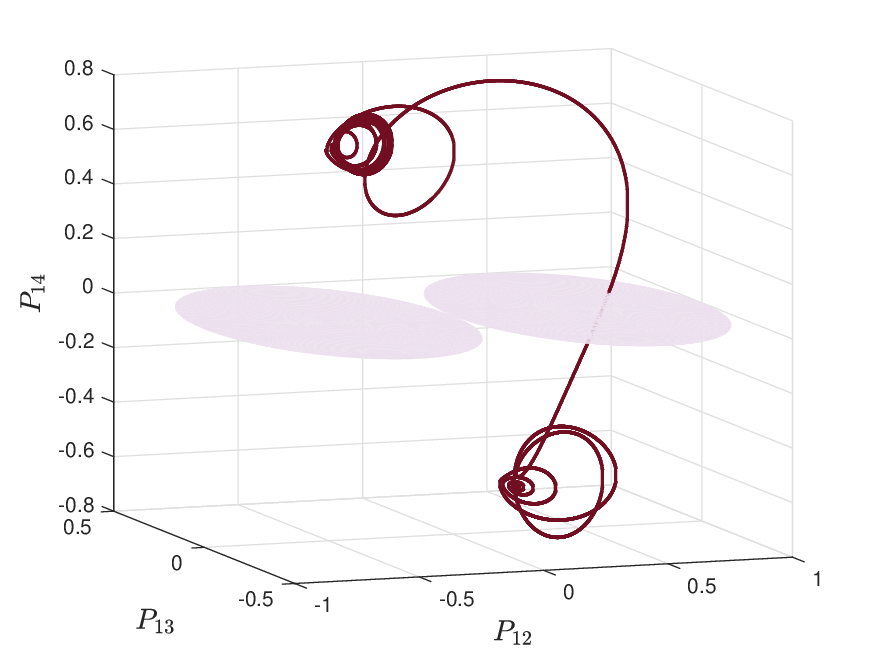}
          \end{subfigure}
     \caption{$\varphi_0(x;1.6, 0.05)$}
     \end{subfigure}
    \begin{subfigure}[b]{\textwidth}
        \centering
        \begin{subfigure}[b]{0.39\textwidth}
        \centering
        \includegraphics[width=\textwidth]{figures/pi_pulse_intro.eps}
        \end{subfigure}
         \begin{subfigure}[b]{0.6\textwidth}
         \centering
         \includegraphics[width=\textwidth]{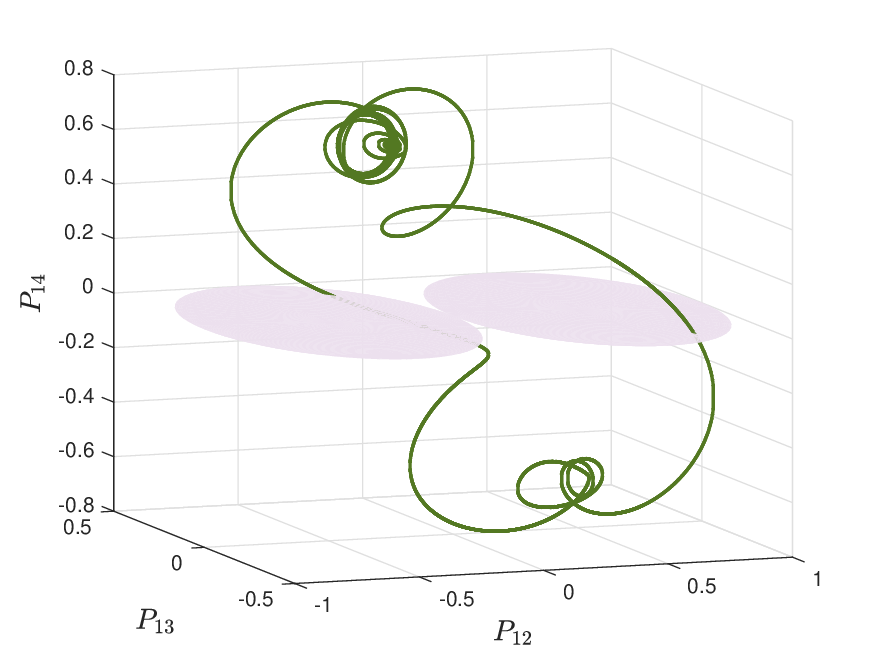}
          \end{subfigure}
     \caption{$\varphi_\pi(x;1.6, 0.05)$}
     \end{subfigure}
    \caption{Example symmetric pulse profiles (left) and the trajectory of the unstable subspace through $\Lambda(2)$ for each associated variational equation (right) for $\mu = 0.05$ and $\nu = 1.6$. The train of $\ell_*^{sand}$ is depicted in purple. Both of these pulse solutions are unstable.}
        \label{fig:L_pm intro results unstable}
\end{figure}

Additionally, we plot the unstable subspace for the variational equation corresponding to the stable pulse with parameter values $\mu = 0.20$ and $\nu = 1.6$. In this case, we see that the entire trajectory of $\mathbb E^u_-(x)$ is bounded away from $\ell_*^{sand}$ in $\Lambda(2)$. 

\begin{figure}[H]
     \centering
    \begin{subfigure}[b]{\textwidth}
        \centering
        \begin{subfigure}[b]{0.39\textwidth}
        \centering
        \includegraphics[width=\textwidth]{figures/stable_pulse_intro.eps}
        \end{subfigure}
         \begin{subfigure}[b]{0.6\textwidth}
         \centering
         \includegraphics[width=\textwidth]{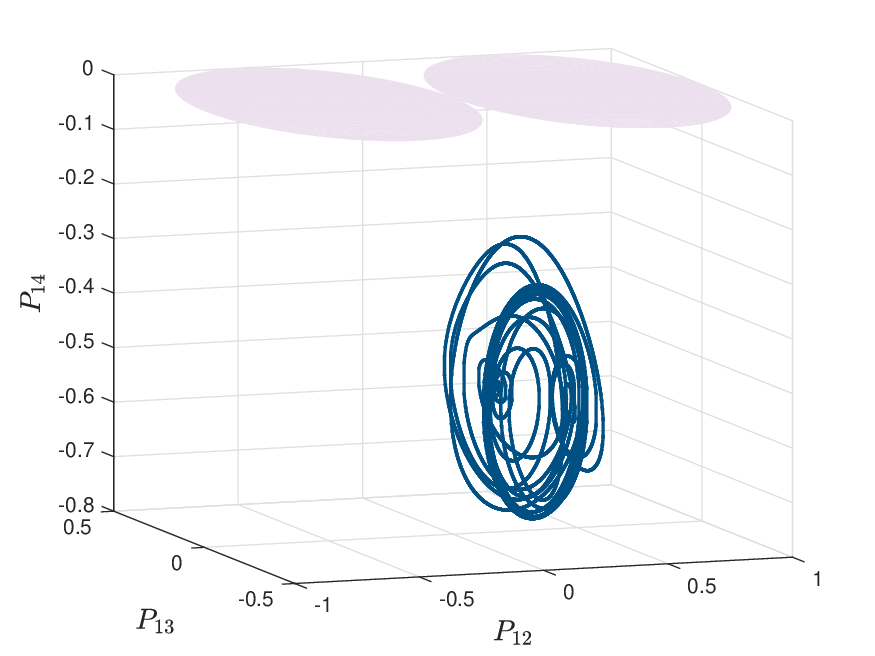}
          \end{subfigure}
     \caption{$\varphi_0(x;1.6, 0.20)$}
     \end{subfigure}
    \caption{Example symmetric pulse profile (left) and the trajectory of the unstable subspace through $\Lambda(2)$ for the variational equation (right) with $\mu = .20$ and $\nu = 1.6$. The train of $\ell_*^{sand}$ is depicted in purple. This pulse solution is spectrally stable.  }
        \label{fig:L_pm intro results stable}
\end{figure}

\begin{remark}
The Matlab code for the results in this section is available online \cite{BJPcode}.
\end{remark}



\section{Future Directions}\label{S:future}
In this work, we have shown that the number of unstable eigenvalues of pulse solutions to the Swift-Hohenberg equation corresponds to the number of conjugate points, as long as Hypothesis \ref{hyp:degeneracy} holds. Additionally, we have derived a novel numerical method for computing the stability of these solutions. However, the computations in Section \ref{S:numerics} do not provide a mathematical proof of the spectral stability of pulse solutions. We are currently developing a method using validated numerics to do so, similar to that done in \cite{MBJJ21}. In order to construct a computer assisted proof of the stability of a given solution $\varphi$, we must:  
\begin{enumerate}[i)]
\item Compute a numerical approximation of $\varphi$ with error bounds;
\item determine the interval $[-L_-, L_+]$ on which the conjugate points occur; 
\item approximate $\mathbb E^u_-(-L_-; 0)$ and integrate forward to $\mathbb E^u_-(L_+; 0)$, with rigorous error bounds; 
\item count the number of times $\mathbb E^u_-(x;0)$ intersects $\ell_*^{sand}$ for $x \in [-L_-, L_+]$,  and show that each intersection is 1 dimensional. 

\end{enumerate}
Computing $\mathbb E^u_-(x;0)$ on this interval is complicated by the presence of a so-called external resonance in the spatial eigenvalues. This resonance was also presence in the work of \cite{MBJJ21}, but the computer assisted proof in this work considered lower order approximations, so this was not an issue. We seek a higher order approximation of the subspace $\mathbb E^u_-(x; 0)$ and so we must address this resonance. 

It would be very interesting to try to remove the need for Hypothesis \ref{hyp:degeneracy}. This could be done in one of two ways. First, one could attempt to prove that this hypothesis in fact always holds for solutions of the Swift-Hohenberg equation. Our numerics for the example pulses that we consider here suggests that this might indeed be true. Additionally, one could also seek to extend the results from \S\ref{section: maslov index for nonregular crossings} to situations in which, at a degenerate crossing, the entire crossing form need not be degenerate. We suspect that proving such results should be possible, and indeed this would be an interesting direction to pursue, even if it were to turn out that, for Swift-Hohenberg, Hypothesis \ref{hyp:degeneracy} did always hold, since this might not be the case for other PDEs.

Additionally, the use of higher order crossing forms could be used to prove a result relating the number of conjugate points to the number of unstable eigenvalues of the fourth order Hamiltonian system 
\begin{equation}
u_t = -\partial_x^4 u - P \partial_x^2 u - u + u^2
\end{equation}
where $P$ is a real parameter. In a certain parameter region, this system admits stationary localized solutions, which are studied in \cite{Buffoni95,Buffoni96}. In \cite{Buffoni96}, these pulse solutions are classified via a sequence of integers $n(\ell_1, \dots, \ell_{n-1})$, called the BCT classification, where $n$ is the number of major local extrema and $\ell_i$ are related to the number of zeros between the major local extrema. Numerical experiments suggest that the number of conjugate points associated to these solutions can be computed via the information encoded in the BCT classification \cite{Chardard09}. If this were to be proved rigorously, this result could be viewed as an extension of Sturm-Liouville theory to fourth order systems. Rigorously justifying the results of \cite{Chardard09} will be the subject of future work.



\appendix
\section{Appendix}\label{S:appendix}
In the appendix, we provide the proofs of several technical lemmas and two explicit examples for which we compute the Maslov index with respect to a particular reference plane. One example has regular crossings and the other has nonregular crossings. 

\subsection{Proofs of technical results}

\subsubsection{Proof of Lemma \ref{lemma: B infinity hyperbolic}}\label{Subsec: proof of introduction lemma}
Here we present the proof of Lemma \ref{lemma: B infinity hyperbolic}. This was relevant in the introduction when discussing the asymptotic behavior of the first order system we obtained by linearizing about the solution $\varphi$ given by \eqref{eq: linearized first order}. \\

\noindent\textbf{Lemma \ref{lemma: B infinity hyperbolic}.}
\textit{Let $B_\infty$ be as defined in \eqref{eq: B_inf matrix}. Then 
\begin{enumerate}[a)]
\item $B_\infty(\lambda)$ is hyperbolic for all $\lambda \geq 0$. 
\item The matrix $B_\infty$ depends analytically on $\lambda$. Also, there are positive constants $K_B$ and $C_B$, independent of $\lambda$, such that 
\begin{equation}\label{eq: matrix conversion B in hypothesis}
\|B(x;\lambda) - B_\infty(\lambda)\| \leq K_Be^{-C_B|x|} \text{ as } x \to \pm \infty.
\end{equation}
\end{enumerate}}

\begin{proof} ~\\
\begin{enumerate}[a)]
\item Using Hypothesis \ref{hyp: nonlinearity}, a direct computation shows that the eigenvalues of $B_\infty(\lambda)$ are given via $\pm \gamma_{i}$, $i = 1,2$ with $\gamma_1 = \bar \gamma_2$ and $\text{Re} \gamma_i > 0.$ We can express $\gamma_1$ as $\gamma_1 = \sqrt{r} e^{i \theta/2}$,
where 
$$ \theta = \arctan \left(-\sqrt{\lambda - f'(0)}\right) \quad \text{ and } r = \sqrt{1 + \lambda - f'(0)}. $$
Since $f'(0) < 0$ and $\lambda \geq 0$, we have that $\theta \in (\frac{\pi}{2}, \pi)$ and $r > 1$.  
\item Since $B_\infty$ is hyperbolic and $\lim_{x \to \pm} \varphi(x) = 0$, it is a standard result that $B(x;\lambda)$ converges to its asymptotic limits at an exponential rate \cite{promislow}. 
\end{enumerate}
\end{proof}

\subsubsection{Proof of Lemma \ref{lemma: evecs of B}}\label{Subsec: proof of B evecs lemma}

Here we present the proof of Lemma \ref{lemma: evecs of B}. This was a technical result that was used in Proposition \ref{prop: zero bottom}, in which we proved that there are no intersections of $\mathbb E^u_-(-\infty, \lambda)$ and $\ell_*^{sand}$. \\

\noindent\textbf{Lemma \ref{lemma: evecs of B}.}
\textit{Let $B_\infty$ be as defined in \eqref{eq: B_inf matrix}. If 
$$ \theta = \arctan \left(-\sqrt{\lambda - f'(0)}\right) \quad \text{ and } r = \sqrt{1 + \lambda - f'(0)},$$
then basis vectors for the real unstable and stable eigenspaces of $B_\infty(\lambda)$ are given via 
\begin{equation}\label{eq: B_inf unstable evecs}
R^u_1 = \begin{pmatrix} \frac{1}{r}\cos \theta \\ 1 \\ \left(\frac{2}{\sqrt r} + \sqrt{r} \right) \cos \left(\frac{\theta}{2} \right)  \\ \frac{1}{\sqrt r} \cos \left(\frac{\theta}{2} \right)
\end{pmatrix}, \quad  R^u_2 = \begin{pmatrix} -\frac{1}{r}\sin \theta \\ 0 \\ \left(\sqrt{r} - \frac{2}{\sqrt r} \right) \sin \left(\frac{\theta}{2} \right)  \\ -\frac{1}{\sqrt r} \sin \left(\frac{\theta}{2} \right)
\end{pmatrix},
\end{equation} 
\begin{equation}\label{eq: B_inf stable evecs}
R_1^s = \begin{pmatrix} \frac{1}{r}\cos \theta \\ 1 \\ \left(-\frac{2}{\sqrt{r}}  - \sqrt r \right) \cos \left( \frac{\theta}{2}\right) \\
-\frac{1}{\sqrt r}\cos \left(\frac{\theta}{2}\right)
\end{pmatrix}, \quad  R_2^s = \begin{pmatrix} -\frac{1}{r}\sin \theta \\ 0 \\ \left(\frac{2}{\sqrt{r}} - \sqrt r \right) \sin \left( \frac{\theta}{2}\right) \\
\frac{1}{\sqrt r}\sin \left(\frac{\theta}{2}\right)
\end{pmatrix}. 
\end{equation}}
\begin{proof}
Define $B_\infty(\lambda)$ by the block structure
$$B_{\infty}(\lambda) = \begin{pmatrix} 0 & 0 & 0 & 1 \\ 0 & 0 & 1 & -2 \\ -\lambda - 1 + f'(0) & 0 & 0 & 0 \\ 0 & 1 & 0 & 0 
\end{pmatrix} : = \begin{pmatrix} 0 & P \\ Q & 0
\end{pmatrix}.$$

We will write the eigenvalues and eigenvectors of $B_\infty(\lambda)$ using information about the eigenvalues and eigenvectors of the matrix $PQ$. Via the properties of block matrices, we know that 
$$ \det\left(B_\infty(\lambda) - \gamma I_{4}\right) = \det \left( \gamma^2 I_2 - PQ \right) .$$
Denote the eigenvalues of $PQ = \begin{pmatrix} 0 & 1 \\ -\lambda -1 + f'(0) & -2
\end{pmatrix}$ by $\alpha_k$ with associated eigenvectors $v_k$, $k = 1,2$. We can write these explicitly as 
\[\begin{matrix} \alpha_1 = -1 + \sqrt{f'(0) - \lambda} = -1 + i\sqrt{\lambda - f'(0)}  := re^{i\theta}, & & v_1 = \begin{pmatrix} 1/\alpha_1 \\ 1
\end{pmatrix}= \begin{pmatrix} \frac{1}{r}e^{-i\theta}  \\ 1
\end{pmatrix} \\
\alpha_2 = -1 - \sqrt{f'(0) - \lambda} = -1 - i\sqrt{\lambda - f'(0)} :=re^{-i\theta}, && v_2 = \begin{pmatrix}1/\alpha_2 \\ 1
\end{pmatrix} := \begin{pmatrix} \frac{1}{r}e^{i\theta} \\ 1
\end{pmatrix}.
\end{matrix}\]

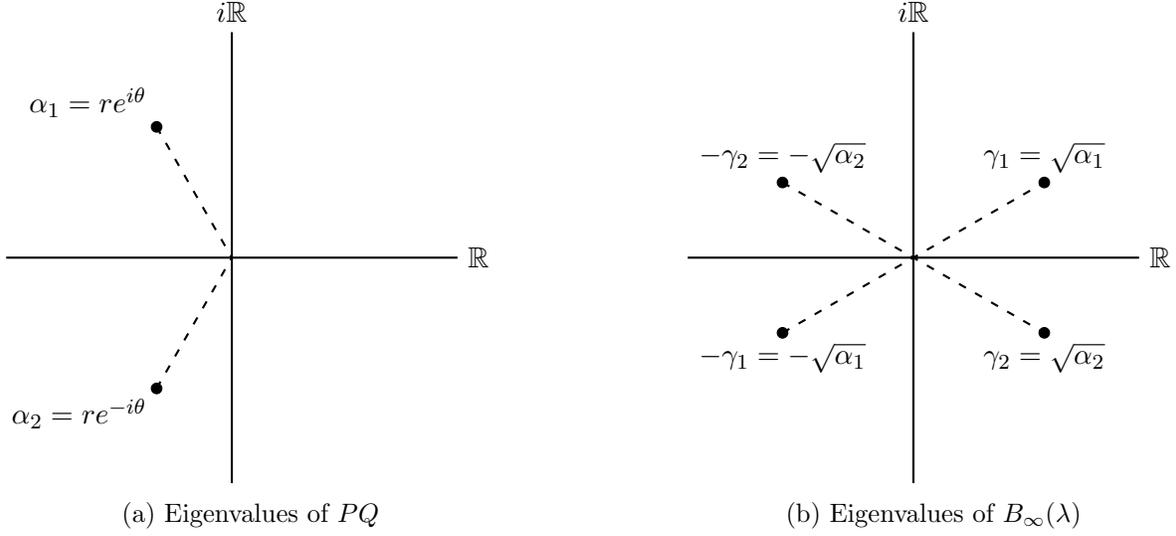
\begin{figure}[H]
    \centering
    \begin{subfigure}[t]{0.45\textwidth}
        \centering
        \begin{tikzpicture}[scale=1]
                  \draw[thick, -] (-3, 0) -- (3, 0) node[right] {$\R$};
                  \draw[thick, -] (0, -3) -- (0, 3) node[above] {$i\R$};
                  \draw[thick, dash pattern={on 3pt off 4pt}] (-2*.5, -2*.87) -- (0, 0);
                  \draw[thick, dash pattern={on 3pt off 4pt}] (-2*.5, 2*.87) -- (0, 0);
                 
                  \filldraw (-2*.5,-2*.87) circle (2pt) node[below left] {$\alpha_2 = re^{-i\theta} $};
                  \filldraw (-2*.5,2*.87) circle (2pt) node[ above left] {$\alpha_1 = re^{i\theta}$};
                  \end{tikzpicture}
        \caption{Eigenvalues of $PQ$}
    \end{subfigure}
    \hfill
    \begin{subfigure}[t]{0.45\textwidth}
        \centering
              \begin{tikzpicture}[scale=1]
                  \draw[thick, -] (-3, 0) -- (3, 0) node[right] {$\R$};
                  \draw[thick, -] (0, -3) -- (0, 3) node[above] {$i\R$};
                  \draw[thick, dash pattern={on 3pt off 4pt}] (-2*.87, -2*.5) -- (2*.87, 2*.5);
                  \draw[thick, dash pattern={on 3pt off 4pt}] (-2*.87, 2*.5) -- (2*.87, -2*.5);

                  \filldraw (-2*.87, 2*.5) circle (2pt) node[above] {$-\gamma_2 = -\sqrt{\alpha_2}$};
                  \filldraw (2*.87, 2*.5) circle (2pt) node[above] {$\gamma_1 = \sqrt{\alpha_1}$};
                  \filldraw (-2*.87,-2*.5) circle (2pt) node[below] {$-\gamma_1 = - \sqrt{\alpha_1}$};
                  \filldraw (2*.87,-2*.5) circle (2pt) node[below] {$\gamma_2 = \sqrt{\alpha_2}$};
                  \end{tikzpicture}
        \caption{Eigenvalues of $B_\infty(\lambda)$} \label{fig:timing1}
    \end{subfigure}
    \caption{Eigenvalues of $B_\infty(\lambda)$ understood through its block structure.}\label{fig: B_inf eigenvalues}
\end{figure}

Then the eigenvalues of $B_\infty(\lambda)$ are given by $\pm \gamma_k = \pm \sqrt{\alpha_k}$, $k = 1,2$. Relabeling as in Figure \ref{fig: B_inf eigenvalues}, define the vectors
\begin{equation}\label{eq: complex eigenvectors Binf block form}
V_k^s  = \begin{pmatrix} v_k \\ -\gamma_k P^{-1} v_k \end{pmatrix} \text{ and } V_k^u  = \begin{pmatrix} v_k \\ \gamma_k P^{-1} v_k \end{pmatrix}.
\end{equation}
A direct computation shows that $(V^{s}_k, -\gamma_k)$ and $(V^{u}_k, \gamma_k)$ are eigenpairs for $B_\infty(\lambda)$ for $k = 1,2$.

Finally, we can use the above characterizations of $\gamma_k$ and $v_k$ to write the eigenvectors of $B_\infty(\lambda)$. Taking real and imaginary parts yields the desired result.

\end{proof}

\subsubsection{Representing Lagrangian Planes as Graphs: The technical details}
Here we characterize the structure of the matrix whose graph is a given Lagrangian plane as is discussed at the beginning of Section \ref{section: grassmann}, which was used in the proof of Theorem \ref{thm: higher order quad form RS extension}. 

Let $\ell(t)$ be a Lagrangian path defined for $t \in [a,b]$ with frame matrix $L(t)$. Fix $t_0 \in \R$ and let $\mathcal W$ be a Lagrangian plane such that $\ell(t_0) \cap \mathcal W = \{0\}$. Then, there exists an $\epsilon > 0$ and a $2n \times 2n$ matrix $A(t): \ell(t_0) \to \mathcal W$ such that for all $v \in \ell(t_0)$, $v \in \ker A(t_0)$ and
\begin{equation}
v + A(t)v \in \ell(t), \qquad  \ t \in (t_0 - \epsilon, t_0 + \epsilon).
\end{equation}

\begin{figure}[H]{}
  \centering
        \begin{tikzpicture}[scale=1]
        \draw[thick, -] (-3, 0) -- (3, 0) node[right] {$\ell(t_0)$};
        \draw[thick, -] (0, -3) -- (0, 3) node[above] {$\mathcal W$};
        \draw[thick, -] (-2.4, -2.4) -- (2.4, 2.4) node[above] {$\ell(t)$};

        \draw[dashed] (1.5, 1.5) -- (1.5, 0)  node[below] {$v$};
        \draw[dashed] (1.5, 1.5) -- (0, 1.5) node[left] {$A(t)v$};

        \filldraw [black] (1.5,1.5) circle (1.5pt) node[right] {$u = v + A(t)v$};
        \filldraw [black] (1.5,0) circle (1.5pt) node[right] {};    
        \filldraw [black] (0,1.5) circle (1.5pt) node[right] {};    
        \end{tikzpicture}
    \caption{Representing $\ell(t)$ as a graph for $t$ near $t_0$.} \label{fig:timing1}
\end{figure}
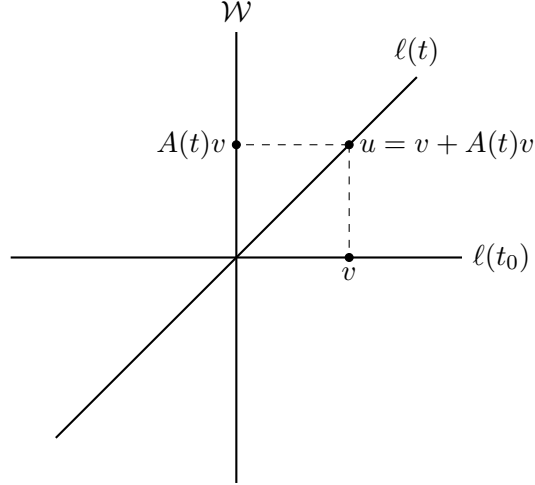

More generally, we have the following result: 
\begin{lemma}[\cite{Lee, piccione}]\label{lemma: represent subspaces as graphs} Let $\mathcal V,  \mathcal W, \mathcal P \in \Lambda(n)$ such that $\mathcal P \cap \mathcal W= \{0\}$ and $\mathcal V \oplus \mathcal W = \R^{2n}$. Then, $\mathcal P$ can be written as the graph of a unique linear map $A: \mathcal V \to \mathcal W$.
\end{lemma}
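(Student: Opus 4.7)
The plan is to use the direct sum decomposition $\mathcal V \oplus \mathcal W = \R^{2n}$ to define the required map $A$ via projection, and then leverage the hypothesis $\mathcal P \cap \mathcal W = \{0\}$ together with a dimension count to show this construction works. Note that the Lagrangian hypotheses on $\mathcal V$, $\mathcal W$, $\mathcal P$ are essentially unused for existence and uniqueness of the graph representation itself; what is used is (i) the transversality $\mathcal P \cap \mathcal W = \{0\}$, (ii) the fact that $\mathcal V \oplus \mathcal W = \R^{2n}$ so that there are well-defined projections, and (iii) the common dimension $\dim \mathcal V = \dim \mathcal P = n$.

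First I would introduce the projections $\pi_{\mathcal V} : \R^{2n} \to \mathcal V$ and $\pi_{\mathcal W} : \R^{2n} \to \mathcal W$ associated with the decomposition $\mathcal V \oplus \mathcal W = \R^{2n}$, so that every $u \in \R^{2n}$ may be written uniquely as $u = \pi_{\mathcal V}(u) + \pi_{\mathcal W}(u)$. Next, I would consider the restriction $\Phi := \pi_{\mathcal V}|_{\mathcal P} : \mathcal P \to \mathcal V$. Its kernel consists of those $p \in \mathcal P$ with $\pi_{\mathcal V}(p) = 0$, i.e.\ $p \in \mathcal P \cap \mathcal W$, which is trivial by hypothesis. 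Thus $\Phi$ is injective, and since $\dim \mathcal P = n = \dim \mathcal V$ it is a linear isomorphism.

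With $\Phi$ in hand, I would define $A : \mathcal V \to \mathcal W$ by
\[
A(v) \;:=\; \pi_{\mathcal W}\bigl(\Phi^{-1}(v)\bigr), \qquad v \in \mathcal V.
\]
This is linear, and for every $v \in \mathcal V$ the element $\Phi^{-1}(v) \in \mathcal P$ decomposes as $\pi_{\mathcal V}(\Phi^{-1}(v)) + \pi_{\mathcal W}(\Phi^{-1}(v)) = v + A(v)$, so $v + A(v) \in \mathcal P$. Conversely, every $p \in \mathcal P$ is of this form with $v = \Phi(p)$. Hence $\mathcal P = \{\, v + A(v) : v \in \mathcal V\,\}$, which is exactly the graph of $A$. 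For uniqueness, if $B : \mathcal V \to \mathcal W$ also satisfies $\mathcal P = \{v + B(v) : v \in \mathcal V\}$, then for each $v \in \mathcal V$ the element $v + B(v) \in \mathcal P$ has $\mathcal V$-component equal to $v$, so $\Phi^{-1}(v) = v + B(v)$; comparing $\mathcal W$-components with the definition of $A$ gives $B(v) = A(v)$.

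There is no real obstacle here: the argument is essentially a reformulation of the fact that a subspace transverse to one summand of a direct sum decomposition, and of the complementary dimension, is necessarily the graph of a linear map to the other summand. The only thing worth being careful about is distinguishing the transversality hypothesis $\mathcal P \cap \mathcal W = \{0\}$ (used to show $\Phi$ has trivial kernel) from the direct sum hypothesis $\mathcal V \oplus \mathcal W = \R^{2n}$ (used to define the projections $\pi_{\mathcal V}, \pi_{\mathcal W}$ in the first place); the Lagrangian hypotheses themselves enter only in supplying the equal dimensions that make $\Phi$ surjective.
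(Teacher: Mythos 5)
Your proof is correct and complete: the transversality $\mathcal P \cap \mathcal W = \{0\}$ gives injectivity of $\pi_{\mathcal V}|_{\mathcal P}$, the equal dimensions (both Lagrangian, hence $n$-dimensional) give surjectivity, and $A = \pi_{\mathcal W}\circ\Phi^{-1}$ is then the unique graphing map. The paper does not prove this lemma itself -- it is stated with citations to the literature -- so there is no in-paper argument to compare against; your projection argument is the standard one, and your observation that the Lagrangian structure enters only through the dimension count is accurate.
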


We characterize the structure of the matrix $A$ in the following proposition. 
\begin{proposition}\label{prop: Structure of A}
Suppose we are in the setting of Lemma \ref{lemma: represent subspaces as graphs} and let $\Pi_{\mathcal V}$ denote the projection matrix onto $\mathcal V$. Then, 
\begin{enumerate}[a)]
    \item $\Pi_{\mathcal V}^T JA \Pi_{\mathcal V}$ is symmetric; 
    \item if in addition $JA = (JA)^T$, then $A$ has the structure 
    \begin{equation}\label{eq: block structure of A}
A = \begin{pmatrix} B & C \\ D & -B^T
\end{pmatrix}.
\end{equation}
with $B, C, D \in \R^{n \times n}$ and $C,D$ symmetric.
\end{enumerate}
\end{proposition}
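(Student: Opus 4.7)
The plan is to establish both parts by elementary arguments: part (a) follows from unpacking the Lagrangian condition on $\mathcal P$, and part (b) is a direct block matrix computation.

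For part (a), I would begin by invoking Lemma \ref{lemma: represent subspaces as graphs}, which gives that $\mathcal P = \{v + Av : v \in \mathcal V\}$ is a Lagrangian subspace by construction. Taking two arbitrary elements of $\mathcal P$ and using the bilinearity and antisymmetry of $\omega$, I would expand
\[
0 = \omega(v_1 + Av_1, v_2 + Av_2) = \omega(v_1, v_2) + \omega(v_1, Av_2) + \omega(Av_1, v_2) + \omega(Av_1, Av_2)
\]
for all $v_1, v_2 \in \mathcal V$. Since $\mathcal V$ is Lagrangian, the first term vanishes; since $\mathcal W$ is Lagrangian and $Av_i \in \mathcal W$, the fourth term vanishes. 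The surviving identity $\omega(v_1, Av_2) = -\omega(Av_1, v_2) = \omega(v_2, Av_1)$ says exactly that the bilinear form $(v_1, v_2) \mapsto \langle v_1, JAv_2 \rangle$ is symmetric on $\mathcal V \times \mathcal V$, which in matrix form is the symmetry of $\Pi_{\mathcal V}^T JA \Pi_{\mathcal V}$.

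For part (b), I would proceed by direct block computation. Writing $A$ in $n \times n$ block form as $A = \begin{pmatrix} A_{11} & A_{12} \\ A_{21} & A_{22} \end{pmatrix}$ and using the standard form $J = \begin{pmatrix} 0 & I_n \\ -I_n & 0 \end{pmatrix}$, I would compute
\[
JA = \begin{pmatrix} A_{21} & A_{22} \\ -A_{11} & -A_{12} \end{pmatrix}, \qquad (JA)^T = \begin{pmatrix} A_{21}^T & -A_{11}^T \\ A_{22}^T & -A_{12}^T \end{pmatrix},
\]
and then equate the two block by block. This yields $A_{21} = A_{21}^T$ (so $D := A_{21}$ is symmetric), $A_{12} = A_{12}^T$ (so $C := A_{12}$ is symmetric), and $A_{22} = -A_{11}^T$. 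Setting $B := A_{11}$ gives the claimed block structure.

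Main obstacle: both parts are essentially routine once the setup is in place. The only place where care is needed is part (a), where one must reconcile the two possible interpretations of $\Pi_{\mathcal V}$: because $A$ maps $\mathcal W$ to $0$ (the extension built into the graph construction) and vectors in $\mathcal V$ are fixed by the projection along $\mathcal W$, the bilinear-form symmetry on $\mathcal V$ transfers cleanly to matrix symmetry of $\Pi_{\mathcal V}^T JA \Pi_{\mathcal V}$. I would state this identification briefly so that no ambiguity remains about what ``symmetric'' means at the matrix level.
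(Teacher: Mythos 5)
Your proposal is correct and follows essentially the same route as the paper: part (a) by expanding $\omega(v_1+Av_1, v_2+Av_2)=0$ and using that $\mathcal V$ and $\mathcal W$ are Lagrangian to kill the first and last terms, and part (b) by equating blocks of $JA$ and $(JA)^T$. If anything, your write-up is slightly more explicit than the paper's, since you spell out why $\omega(v_1,v_2)$ and $\omega(Av_1,Av_2)$ vanish and you use the paper's standard sign convention for $J$ consistently.
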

\begin{proof} ~\\
\begin{enumerate}[a)]
\item For $v_1, v_2 \in \mathcal V$, we have that $v_1 + Av_1$ and $v_2 + Av_2$ are both in $\mathcal W$. Therefore, 
\begin{align*} 0 & = \omega(v_1 + Av_1, v_2 + Av_2) \\
& = \omega(v_1, v_2) + \omega(v_1, Av_2) + \omega(Av_1, v_2) + \omega(Av_1, Av_2) \\
& = \omega(v_1, Av_2) + \omega(Av_1, v_2) \\
& = \langle v_1, JAv_2\rangle + \langle v_1, A^TJ v_2 \rangle \\
& = v_1^T \big(JA + A^TJ \big)v_2.
\end{align*}
If $\Pi_{\mathcal V}$ denotes the projection operator onto $\mathcal V$, then for all $x,y \in \R^{2n}$, we have 
$$x^T \Pi_{\mathcal V}^T\big(JA + A^TJ\big)\Pi_{\mathcal V} y = 0.$$
Therefore, the Lagrangian property tells us that $\Pi^T_{\mathcal V} JA \Pi_{\mathcal V} = -\Pi_{\mathcal V}^TA^TJ\Pi_{\mathcal V}$.
On the other hand, we can compute 
$$\big( \Pi^T_{\mathcal V} JA \Pi_{\mathcal V} \big)^T = \Pi^T_{\mathcal V} A^TJ^T \Pi_{\mathcal V}  = -\Pi^T_{\mathcal V} A^TJ \Pi_{\mathcal V}.$$
By putting these pieces together, we see that 
\begin{equation}\label{eq: entire mat sym}
\big( \Pi_{\mathcal V}^T JA \Pi_{\mathcal V} \big)^T =  \Pi^T_{\mathcal V} JA \Pi_{\mathcal V} ,
\end{equation}
so this matrix is symmetric. 
\item Now suppose that $JA = (JA)^T$. If we represent $A$ with the block structure $B,C,D,E$, we see that
\begin{align*}
JA & = \begin{pmatrix} 0 & -I_n \\ I_n & 0 
\end{pmatrix} \begin{pmatrix} B & C \\ D & E
\end{pmatrix} = \begin{pmatrix} -D & -E \\ B & C
\end{pmatrix} \\
(JA)^T & = \begin{pmatrix} -D^T & B^T \\ -E^T & C^T
\end{pmatrix}.
\end{align*}
Note that if we assume $A$ has such a structure and derive a matrix $A$ whose graph is $\mathcal P$, then we have found the unique graph representation of $\mathcal P$. 
\end{enumerate}
\end{proof}

\subsection{An Explicit Example with the Maslov Index}\label{subsec: explicit example}

Here we present an explicit example using a path of Lagrangian subspaces whose basis vectors are constructed from solutions to an ODE. We compute the Maslov index with respect to the sandwich plane using the crossing form. This example serves two purposes. The first is to illustrate the subtlety in expressing a path of Lagrangian subspaces as a graph that is discussed in Remark \ref{rmk: graphing error}. The second is to concretely show the relationship between the higher order crossing forms and the derivatives of the eigenvalues of the graph matrix (see Section \ref{S:eval-motion}).

Consider the autonomous differential equation 
\begin{equation}\label{eq: ODE}
\dot q = \underbrace{\begin{pmatrix} 0 & 0 & 1 & 0 \\ 0 & 0 & 0 & 0 \\ 0 & - 1 & 0 & 0 \\ -1 & 0 & 0 & 0 \end{pmatrix}}_{:= B} q = \underbrace{\begin{pmatrix} 0 & 0 & 1 & 0 \\ 0 & 0 & 0 & 1 \\ -1 & 0 & 0 & 0 \\0 & -1 & 0 & 0  \end{pmatrix}}_{:=J} \underbrace{\begin{pmatrix} 0 & 1 & 0 & 0 \\ 1 & 0 & 0 & 0 \\ 0 & 0 & 1 & 0 \\ 0 & 0 & 0 & 0   \end{pmatrix}}_{:=C}q.
\end{equation}
This system has a general solution given by 
\begin{equation}\label{eq: general solution to ODE}
\begin{split} q_{1}(s) &= -\frac{C_{2}}{2}s^{2}+ C_{3}s + C_{1} \\
q_{2}(s) &= C_{2}\\
q_{3}(s) &= -C_{2}s + C_{3}\\
q_{4}(s) &= \frac{C_{2}}{6}s^{3}- \frac{C_{3}}{2}s^{2}- C_{1}s + C_{4}. 
\end{split}
\end{equation}

\subsubsection{Regular Crossings}

Consider the Lagrangian plane 
$$\ell_{1}(s) = \text{colspan} \begin{pmatrix} | & | \\
v_1(s) & v_2(s) \\ | & | 
\end{pmatrix} :=\text{colspan} \begin{pmatrix} -\frac{1}{2}s^{2}+ 2s & -3s^{2}+ 1 \\ 1 & 6  \\ 2 - s & -6s  \\  \frac{1}{6}s^{3}- s^{2} & s^{3}- s + 2 
\end{pmatrix}.$$
Note that this pair of solutions to the above ODE satisfies the Lagrangian condition. The first spanning vector intersects $\ell_*^{sand}$ at $s = 0$. We will use the crossing form to understand how this crossing contributes to the Maslov index. To do so, we first want to write $\ell_1(s)$ as a graph. Suppose that $A_1(s): \ell_1(0) \to (\ell_*^{sand})^\perp$ where 
$$ \big(\ell_*^{sand}\big)^\perp = \text{colspan} \begin{pmatrix} 1 & 0 \\ 0  & 0 \\ 0 & 0 \\ 0 & 1 \end{pmatrix}.$$

\begin{remark}
It is clear from this example that there is no such matrix $A_1(s): \ell_1(0) \to (\ell_*)^\perp$ that leaves $v_1(0)$ invariant, meaning $v_1(0) + A_1(s)v_1(0) = v_1(s)$. The problem arises in the third component. To see this explicitly, we can write 
$$ \begin{pmatrix} 0\\ 1 \\ 2 \\ 0 \end{pmatrix} + A_1(s)\begin{pmatrix} 0\\ 1 \\ 2 \\ 0 \end{pmatrix} = \begin{pmatrix} f_1(s) \\ 1 \\ 2 \\ f_2(s)
\end{pmatrix}  \neq \begin{pmatrix} -\frac{1}{2}s^{2}+ 2s  \\ 1  \\ 2 - s  \\  \frac{1}{6}s^{3}- s^{2}
\end{pmatrix}.$$
There are no such $f_1(s), f_2(s)$ that could satisfy this relationship. 
\end{remark}

We seek a matrix $A_1(s): \ell_1(0) \to \mathcal (\ell_*)^\perp$ having the structure given in Proposition \ref{prop: Structure of A} such that for $k_1, k_2 \in \R$,
\begin{equation}\label{eq: simple regular crossing matrix equation}
k_{1}\begin{pmatrix} 0\\ 1 \\ 2 \\ 0 \end{pmatrix} + k_{2}\begin{pmatrix} 1 \\ 6 \\ 0 \\ 2
\end{pmatrix} + A_1(s) \begin{pmatrix} k_{2}\\ k_{1}+ 6k_{2}\\ 2k_1 \\ 2k_2
\end{pmatrix} = c_{1} \begin{pmatrix} -\frac{1}{2}s^{2} + 2s \\ 1 \\ 2-s \\ \frac{s^{3}}{6} - s^2 \end{pmatrix}+ c_{2} \begin{pmatrix} -3s^{2}+ 1 \\ 6 \\ -6s \\ s^{3}- s + 2
\end{pmatrix}.
\end{equation}
By focusing our attention on the middle two components of $A_1(s)\big(k_1 v_1(0) + k_2v_2(0)\big)$, which are $0$ by assumption, we derive
\begin{align*}
c_1 & = \left(\frac{s}{2} + 1\right)k_1 + (3s)k_2 \\
c_2 & = \left(-\frac{1}{12}s\right)k_1 + \left(1 - \frac{1}{2}s \right)k_2.
\end{align*}
There is one degree of freedom in this system and we find that a solution is given by
\begin{equation}
A_1(s) = \begin{pmatrix} 3s^2 - \frac{1}{2} & 0 & \frac{1}{4}s^2 + \frac{23}{24}s & 0 \\ 
a_{21} & 0 & 0 & -\frac{1}{2}a_{21} \\
-36s^2 + 6s + 2a_{21} & 6s^2 - s & -\frac{1}{2}(6s^2 - s) & -a_{21} \\ 
6s^2 - s & \frac{1}{12}(4s^3 - 11s^2 - 2s) & 0  & 0
\end{pmatrix}.
\end{equation}
Let $\Pi_1$ be the projection onto $\ell_1(0) \cap \ell_* = v_1(0)$. This matrix can be calculated explicitly as 
$$\Pi_1 = v_1(0)\left(v_1(0)^Tv_1(0)\right)^{-1}v_1(0)^T = \frac{1}{5}\begin{pmatrix} 0 & 0 & 0 & 0 \\ 0 & 1 & 2 & 0 \\ 0 & 2 & 4 & 0 \\ 0 & 0 & 0 & 0
\end{pmatrix}. $$

Using $\Pi_1$ given above, we can explicitly compute the matrix that determines the crossing form as 
$$ \Pi_1 JA_1(s)\Pi_1 =  -\frac{s}{150} \left(4s^2 + 23s + 48 \right) \begin{pmatrix} 0 & 0 & 0 & 0 \\ 0 & \frac{1}{2}  & 1 & 0 \\ 0 & 1  & \frac{1}{2} & 0 \\ 0 & 0 & 0 & 0
\end{pmatrix}.$$

The eigenvalues of $\Pi_1JA_1(s)\Pi_1$ determine whether this crossing contributes positively or negatively to the Maslov index. The first order crossing form (Definition \ref{def: first order quadratic form}) captures the sign of the first derivative of this eigenvalue. We can calculate that $\Pi_1JA_1(s)\Pi_1$ has three uniformly $0$ eigenvalues and one $s$-dependent eigenvalue given by 
\begin{equation}
\lambda(s) =  - \frac{1}{15}s^3 - \frac{23}{60}s^2 - \frac{4}{5}s.
\end{equation}

The crossing form calculates the sign of this derivative. We can calculate that 
$$Q^{(1)}(v_1(0))  = \langle v_1(0), J A_1'(0)v_1(0) \rangle = -4.$$
On the other hand, we can differentiate directly to see that 
$$ \lambda'(0) = -\frac{4}{5}.$$
Note that $\lambda'(0)$ and $Q^{(1)}(v_1(0))$ are off by a factor of $5$. If we want to directly apply Theorem \ref{thm: lancaster eigenvalue derivatives} to conclude $\lambda'(0) = Q^{(1)}(v_1(0))$, we first need to normalize $v_1(0)$.

We can also use \eqref{eq: simple regular crossing matrix equation} to calculate the crossing form without explicitly knowing the structure of $A_1(s)$. With $k_1 = 1$ and $k_2 = 0$, we have that 
\begin{align*} c_1(s) & = \left(\frac{s}{2} + 1 \right) \\
c_2(s) & = -\frac{1}{12}(s).
\end{align*}
Using this, we can calculate  
\begin{align*}
Q^{(1)}(v_1(0)) 
& = \big\langle v_1(0), J A_1'(0)v_1(0) \big\rangle \\
& = \left\langle v_1(0), J\big(c_1'(0)v_1(0) + c_1(0)v_1'(0) + c_2'(0) v_2(0) + c_2(0)v_2'(0)\big)v_1(0)\right\rangle \\
& = \langle v_1(0), Jv'_1(0)\rangle \\ 
& = \langle v_1(0), JBv_1(0) \rangle \\
& = -4. 
\end{align*}

\subsubsection{Non-regular Crossings}
We now consider a path of Lagrangian planes with a nonregular crossing. Consider the Lagrangian plane 
$$\ell_{2}(s) =\text{colspan} \begin{pmatrix} | & | \\ v_1(0) & v_2(0) \\ | & |  \end{pmatrix} := \text{colspan} \begin{pmatrix} -\frac{s^{2}}{2} & -3s^{2}+ 1 \\ 1 & 6 \\ -s & -6s \\ \frac{s^{3}}{6} & s^{3}- s
\end{pmatrix}$$
satisfying \eqref{eq: ODE}. The first spanning vector intersects $\ell_*^{sand}$ at $s = 0$. 

As in the previous example, we want to calculate the contribution of this intersection to the Maslov index with respect to the sandwich plane by using the crossing form. For $s$ sufficiently small, we can write $\ell_2(s)$ as the graph of a matrix $A_2(s) \in \R^{4\times 4}$ having the structure given in Proposition \ref{prop: Structure of A}. Furthermore, we want $A_2(s): \ell_{2}(0) \to \ell_2(0)^\perp$. Note that we do not seek $A_2(s): \ell_2(0) \to (\ell_*)^\perp$ because $(\ell_*)^\perp \cap \ell_2(0) \neq \{0\}$. With these assumptions $A_2(s)$ must satisfy the equation
\begin{equation}\label{eq: simple nonregular crossing matrix equation}
\begin{split}
k_{1}\begin{pmatrix} 0\\ 1 \\ 0 \\ 0 \end{pmatrix} + k_{2}\begin{pmatrix} 1 \\ 6 \\ 0 \\ 0
\end{pmatrix} + A_2(s) & \begin{pmatrix} k_{2}\\ k_{1}+ 6k_{2}\\ 0 \\ 0
\end{pmatrix} = c_{1} \begin{pmatrix} -\frac{s^{2}}{2} \\ 1 \\ -s \\ \frac{s^{3}}{6} \end{pmatrix}+ c_{2} \begin{pmatrix} -3s^{2}+ 1 \\ 6 \\ -6s \\ s^{3}- s
\end{pmatrix}\\
\end{split}
\end{equation}

we find that $c_1$ and $c_2$ satisfy  
\begin{align*}
c_1 & = k_1 - 3s^2 (k_1 + 6k_2) \\
c_2 & = k_2 + \frac{1}{2}s^2(k_1 + 6k_2).
\end{align*}


By substituting these equations in for $c_1$ and $c_2$ and solving this system of equations, we find a matrix satisfying \eqref{eq: simple nonregular crossing matrix equation} to be
$$A_2(s) = \begin{pmatrix} 0 & 0 & a_{13} & a_{14} \\ 0 & 0 & a_{14} & a_{24} \\
0 & -s & 0 & 0  \\
-s & -\frac{s^{3}}{3} & 0 & 0 \end{pmatrix}.$$
Note that $a_{13}, a_{14}$ and $a_{24}$ are free. 

Define $\Pi_2$ to be the projection matrix onto $\ell_* \cap \ell_2(0)$. We can compute that 
$$\Pi_2 = \begin{pmatrix}
0 & 0 & 0 & 0 \\ 0 & 1 & 0 & 0 \\ 0 & 0 & 0 & 0 \\ 0 & 0 & 0 & 0
\end{pmatrix}.$$

Then, the sign of the crossing form tracks how the sign of the nonzero eigenvalue of $\Pi_2 J A_2(s) \Pi_2$ changes as $s$ increases through $0$. Because $A_2(s)$ is explicitly known, we can calcualate this as 
$$ \Pi_2 J \ADD{A_2(s)} \Pi_2 = \begin{pmatrix} 0 & 0 & 0 & 0 \\ 0 & -\frac{1}{3}s^3 & 0 & 0 \\ 0 & 0 & 0 & 0 \\ 0 & 0 & 0 & 0 
\end{pmatrix}.$$
This matrix has $3$ zero eigenvalues and one $s$-dependent eigenvalue given by 
$$ \lambda_1(s) = -\frac{1}{3}s^3.$$
Since this eigenvalue is decreasing through $0$, we expect the intersection of $\ell_2(s)$ and $\ell_*^{sand}$ at $s = 0$ to contribute negatively to the Maslov index. 

The vector $v = \begin{pmatrix} 0 & 1 & 0 & 0 
\end{pmatrix}$ is in $\ell_* \cap \ell_2(0)$. If we compute the first order crossing form, we find 
\begin{align*} Q^{(1)}(v) & = \big\langle v, JA_2'(0)v \big\rangle \\
& = \begin{pmatrix} 0  & 0 &  0 & 1 \end{pmatrix} 
 \begin{pmatrix} 0 \\ 1 \\ -1 \\ -s^2
\end{pmatrix} \bigg|_{s = 0} \\ 
& = -s^2 \bigg|_{s = 0} \\
& = 0.
\end{align*}
Proceeding similarly, we find that 
\begin{align*} Q^{(2)}(0) & = \big\langle v_1(0), J A_2''(0)v_1(0) \big\rangle \\ 
& = 0 \\ \\ 
Q^{(3)}(0) & = \big \langle v_1(0), JA_2^{(3)}(0)v_1(0) \big \rangle \\ 
& = -2.
\end{align*}
If we did not know $\lambda(s)$ explicitly, we could use Theorem \ref{theorem: maslov gen tang}
 to conclude that this crossing contributes negatively to the Maslov index. Theorem \ref{thm: lancaster eigenvalue derivatives} tells us that this means $\lambda(s)$ must be decreasing at $s = 0$.

If we didn't explicitly know $A_2(s)$ (as is the case with many applications to differential equations), we can use the general structure in \eqref{eq: simple nonregular crossing matrix equation} to calculate the sign of the third crossing form. If we set $k_1 = 1$ and $k_2 = 0$, then $c_1(s) = 1-3s^2$ and $c_2(s) = \frac{1}{2}s^2$. In this case, we have 
\begin{align*} Q^{(3)}(v_1(0)) & = \left\langle v_1(0), JA^{(3)}(0)v_1(0) \right\rangle \\
& = \underbrace{\langle v_1(0), J \left( c_1'''(0)v_1(0) + c_2'''(0)v_2(0) \right) \rangle}_{=0 \text{ via Lag. prop.}} + 3 \langle v_1(0), J \left(c_1''(0)v_1'(0) + c_2''(0)v_2'(0) \right) \rangle \\
& \qquad + \underbrace{3\langle v_1(0), J(c_1'(0)v_1''(0) + c_2'(0)v_2''(0)) \rangle}_{0 \text{ since }c_1'(0) = c_2'(0) = 0} + \langle v_1(0), J (c_1(0)v_1'''(0) + \underbrace{c_2(0)}_{=0}v_2'''(0)) \rangle \\
& = 3 c_1''(0)\underbrace{\langle v_1(0), J v_1'(0) \rangle}_{=Q^{(1)}(v_1(0)) = 0} + 3c_2''(0) \langle v_1(0), J v_2'(0)  \rangle + \langle v_1(0), J (c_1(0)v_1'''(0) \rangle \\
& = 3 \langle v_1(0), JBv_2(0) \rangle + \langle v_1(0), JBBBv_1(0) \rangle \\
& = -3 + 1 \\
& = -2.
\end{align*}

\bibliographystyle{alpha}
\bibliography{entire-paper}

\end{document}